\let\mathbb\mathds
\DeclareMathAlphabet\mathbfcal{OMS}{cmsy}{b}{n}
\pgfplotsset{compat=1.13}
\tikzset{Rightarrow/.style={double equal sign distance,>={Implies},->},
	triple/.style={-,preaction={draw,Rightarrow}},
	quadruple/.style={preaction={draw,Rightarrow,shorten >=0pt},shorten >=1pt,-,double,double
		distance=0.2pt}}
\def\on{\operatorname}
\def\CC{\mathbb{C}}
\def\C{\EuScript{C}}
\def\D{\EuScript{D}}
\def\DD{\mathbb{D}}
\def\OO{\mathbb{O}}
\def\Fun{\on{Fun}}
\def\Cat{\on{Cat}}
\def\iCat{\EuScript{C}\!\on{at}}
\def\Hom{\on{Hom}}
\def\D{\EuScript{D}}
\def\Nerv{\on{N}}
\def\id{\on{id}}
\def\Set{\on{Set}}
\DeclarePairedDelimiterX\set[1]{\lbrace}{\rbrace}{\def\given{\:\delimsize\vert\allowbreak\:}#1}
\newlist{implications}{description}{1} 
\setlist[implications]{itemsep=0pt,leftmargin=\parindent}
\NewDocumentCommand\implication{o}
  {\IfValueTF{#1}
    {\auximplication#1\relax}
    {\item[\normalfont($\,\Rightarrow\,$)]}}
\NewDocumentCommand\auximplication{u-u\relax}
  {\item[\normalfont(#1)$\,\Rightarrow\,$(#2)]}
\newcounter{diagram}[section]
\def\thediagram{\thesection.\arabic{diagram}}
\def\ftype@diagram{4}
\def\ext@diagram{diag}
\def\fnum@diagram{Diagram~\thediagram}
\def\fs@diagram{htbp!}
\NewDocumentEnvironment{diagram}{O{htbp!}m}
  {\@float{diagram}[#1]\centering}
  {
   
   \caption{}
   \label{#2}
   \end@float
  }
\newcounter{subdiagram}[diagram]
\def\thesubdiagram{\thediagram.\arabic{subdiagram}}
\NewDocumentCommand\domultidiagram{omu\enddomultidiagram}
 {
  \IfValueTF{#1}{\diagram[#1]}{\diagram}{}
   \refstepcounter{diagram}
   \centering
    \seq_clear:N \l_tmpb_seq
    \seq_set_split:Nnn \l_tmpa_seq { \next } { #3 }
    \seq_map_inline:Nn \l_tmpa_seq
     {
      \seq_put_right:Nn \l_tmpb_seq
       {
        \begin{tabular}[b]{@{}c@{}}
         ##1 \\[3ex]
         \refstepcounter{subdiagram}
         \label{#2\othercolon\the\value{subdiagram}}
         Diagram~\thesubdiagram 
        \end{tabular}
       }
     }
    \seq_use:Nn \l_tmpb_seq { \qquad }
   \let\label\@gobble
   \let\caption\@gobble
  \enddiagram
 }
\def\othercolon{:}
\declaretheoremstyle[bodyfont=\itshape,notefont=\bfseries]{abellanA}
\declaretheoremstyle[notefont=\bfseries]{abellanB}
\declaretheorem[style=abellanA,numberwithin=section,name={Theorem}]{theorem}
\declaretheorem[style=abellanA,numberlike=theorem,name={Lemma}]{lemma}
\declaretheorem[style=abellanB,numberlike=theorem,name={Definition}]{definition}
\declaretheorem[style=abellanB,numberlike=theorem,name={Remark}]{remark}
\declaretheorem[style=abellanB,numberlike=theorem,name={Construction}]{construction}
\declaretheorem[style=abellanA,numberlike=theorem,name={Proposition}]{proposition}
\declaretheorem[style=abellanB,numberlike=theorem,name={Example}]{example}
\declaretheorem[style=abellanB,numbered=no,name={Notation}]{notation}
\declaretheorem[style=abellanB,numbered=no,name={Convention}]{convention}
\declaretheorem[style=abellanA,numberlike=theorem,name={Corollary}]{corollary}
\let\leq\leqslant
\let\geq\geqslant
\let\epsilon\varepsilon
\let\isom\cong
\newcommand*\numberset{\mathbb}
\newcommand*\tensor{\otimes}
\newcommand*\N{\numberset{N}}
\newcommand*\restr[3][\relax]{#2#1\rvert_{#3}^{}}
\newcommand*\mathblank{\mathord{-}}
\def\msSet{{\on{Set}_{\Delta}^+}}
\def\scsSet{{\on{Set}_{\Delta}^{\on{sc}}}}
\def\Cat{\on{Cat}}
\let\emptyset\varnothing
\newcommand{\fixed@sra}{$\vrule height 2\fontdimen22\textfont2 width 0pt\rightarrow$}
\newcommand{\shortarrowup}[1]{%
	\mathrel{\text{\rotatebox[origin=c]{65}{\fixed@sra}}}
}
\newcommand{\shortarrowdown}[1]{%
	\mathrel{\text{\rotatebox[origin=c]{250}{\fixed@sra}}}
}
\tikzset{boldred/.style = {draw=red, 
		line width=#1, -{Straight Barb[length=3pt]}},
	boldred/.default=2pt
}
\definecolor{darkmagenta}{rgb}{0.55, 0.0, 0.55}
\tikzset{boldmagenta/.style = {draw=darkmagenta, 
		line width=#1, -{Straight Barb[length=3pt]}},
	boldmagenta/.default=2pt
}				
\tikzset{boldblue/.style = {draw=blue, 
		line width=#1, -{Straight Barb[length=3pt]}},
	boldblue/.default=2pt
}
\DeclareMathOperator\Nsc{N^{sc}}
\DeclareMathOperator*\colim{colim}
\DeclareMathOperator*\holim{holim}
\DeclareMathOperator*\hocolim{hocolim}
\def\op{{\on{op}}}
\def\Tw{\on{Tw}}
\def\tw{\on{Tw}}
\newcommand*\dirlim{\mathop{\mathpalette\varlim@{\rightarrowfill@\scriptscriptstyle}}\nmlimits@}
\newcommand*\prolim{\mathop{\mathpalette\varlim@{\leftarrowfill@\scriptscriptstyle}}\nmlimits@}
\newcommand{\nat}{\Rightarrow}
\tikzset{
  abellanarrows/.style={line cap=round,line join=round,line width=.4pt},
  abellanarrowlength/.store in=\abellanarrowlength,
}
\NewDocumentCommand \func { s O{} m }
 {
  \group_begin:
   \IfBooleanTF{#1}
    { \keys_set:nn { abellan / func } { aligned = true , #2 } }
    { \keys_set:nn { abellan / func } {#2} }
   \abellan_func:n {#3}
  \group_end:
 }
\NewDocumentCommand \arr { s o m }
 {
  \IfBooleanF{#1}
   { \bool_if:NT \l_abellan_aligned_bool { & } }
  \abellan_arr:n {#3}
 }
\NewDocumentCommand \addarr { o m m }
 {
  \keys_set:nn { abellan / func / addarrow } { name = {#2} , #3 }
  \tl_clear:N \l_abellan_arrname_tl
 }
\NewDocumentCommand \setupfunc { m } { \keys_set:nn { abellan / func } {#1} }
\tikzset{abellanarrowlength={#1}} ,
\tikzset{abellanarrows/.append ~ style={#1}} ,
\NewDocumentCommand \abellan_addarrow:nnww { m m O{} u\q_abellan }
 {
  \exp_args:Nc \NewDocumentCommand { abellan_arr_#1_#2:w } { #3 }
   {
    \use:c { abellan_arr_ \l_abellan_arrmode_tl :n } { #4 }
   }
 }
\newcommand*\resetdynamicto
\gdef\dynamicto{\arr*{to}\gdef\dynamicto{\arr*{mapsto}}}}
\NewDocumentCommand \printheader { m o m }
 {
  \par\noindent
  \begin{minipage}[t]{\textwidth}\noindent
  
  \begin{tabular}[t]{ll}
     & \keyval_parse:NNn \abellan_printname:n \abellan_printnamemail:nn { #3 } 
  \end{tabular}
  \vspace{.4cm}
  \end{minipage}
  \begin{center}\Large\bfseries
   #1 \IfValueT{#2}{\\[1ex] \large #2}
  \end{center}
  \vspace{.6cm}
 }
\quad\texttt{#2} \\ &
				\string\usetikzlibrary{decorations.markings} to use arrows with markings}{}}{}%
\def\scr{\EuScript}
\newcommand*{\boxwedge}{%
	\mathbin{%
		\mathpalette\@boxwedge{}%
	}%
}
\newcommand*{\@boxwedge}[2]{%
	\sbox0{$#1\boxplus\m@th$}%
	\dimen2=.5\dimexpr\wd0-\ht0-\dp0\relax 
	\dimen@=\dimexpr\ht0+\dp0\relax
	\def\lw{.06}
	\kern\dimen2 
	\tikz[
	line width=\lw\dimen@,
	line join=round,
	x=\dimen@,
	y=\dimen@,
	]
	\draw
	(\lw/2,0) rectangle (1-\lw,1-\lw)
	(\lw,0) -- (.5,1-\lw-\lw/2) -- (1-\lw-\lw/2 ,0)
	;%
	\kern\dimen2 
}
\newcommand{\goth}{\textfrak}
\title{Enhanced twisted arrow categories}
\author{Fernando Abell\'an Garc\'ia and Walker H. Stern}
\date{}
\begin{document}
    \maketitle
\begin{abstract}
	Given an $\infty$-bicategory $\DD$ with underlying $\infty$-category $\D$, we construct a Cartesian fibration $\Tw(\DD)\to \D \times \D^{\op}$, which we call the \emph{enhanced twisted arrow} $\infty$-category, classifying the restricted mapping category functor $\on{Map}_{\DD}:\D^{\op}\times \D \to \DD^{\op} \times \DD \to \iCat_{\infty}$. With the aid of this new construction, we provide a description of the $\infty$-category of natural transformations $\on{Nat}(F,G)$ as an end for any functors $F$ and $G$ from an $\infty$-category to an $\infty$-bicategory. As an application of our results, we demonstrate that the definition of weighted colimits presented in \href{https://arxiv.org/abs/1501.02161}{arXiv:1501.02161} satisfies the expected 2-dimensional universal property.
\end{abstract}

\tableofcontents

\section*{Introduction}
\addcontentsline{toc}{section}{Introduction}

Of the many tools belonging to the study of categories, perhaps the most key is the Yoneda lemma. The fully faithfulness of the functor 
\[
\func*{ \C \to \Set_\C; 
x \mapsto h_x:=\Hom_{\C}(-,x)}
\]
means, in particularly, that we can view functors $\func{f:\C^\op\to \Set}$ as \emph{universal properties}, and thereby uniquely specify an object $x$ by requiring $h_x\cong f$. 

In the higher-categorical realm, the good news is that this result still holds. The $(\infty,1)$-categorical Yoneda embedding 
\[
\func{\scr{Y}:\C \to \scr{S}_\C}
\] 
is fully faithful (c.f. e.g. \cite[5.1.3.1]{HTT}). While this is auspicious for the study of universal properties as described above, it comes with a significant complication. The standard presentation of the target category $\scr{S}_{\C}$ (which is also written variously as $\scr{P}(\C)$ or $\Fun(\C^\op,\scr{S})$) is in terms of a model structure on the category $\Fun(\mathfrak{C}[\C],\Set_\Delta)$ of simplicially enriched functors. 

The model $\Fun(\mathfrak{C}[\C],\Set_\Delta)$ is extremely useful in relating the underlying $\infty$-category to other $\infty$-categories --- for example in the proof of the $\infty$-categorical Yoneda lemma. The problem arises in that it is often extremely difficult to write down explicit simplicially-enriched functors, and explicit simplicially-enriched natural transformations between them. When the initial definition of $\C$ is as a quasi-category, it can even be difficult to write down $\mathfrak{C}[\C]$ explicitly. 

As in so many parts of higher category theory, the way out of this dilemma is the Grothendieck construction. We can proceed  according to the 
\begin{quote}
	{\scshape Slogan:} {\itshape Cartesian fibrations and maps between them are easier to work with than enriched functors and natural transformations between them.}
\end{quote}  
From this perspective, if we want to study representable functors and universal properties, we need first to classify the Yoneda embedding by a fibration. 

\subsection*{The twisted arrow category}
\addcontentsline{toc}{subsection}{The twisted arrow category}

The canonical solution to this problem is the \emph{twisted arrow ($\infty$-)category}. In e.g. \cite{LurieDAGX} and \cite{Cisinski}, it is shown that for each $\infty$-category $\C$, there is a right fibration\footnote{It is worth commenting that Cisinski and several other authors tend to work with the \emph{left} fibration associated to the same functor. The difference between the two definitions amounts to an \enquote{$\op$}, in the definition of the simplices of $\Tw(\C)$. Throughout the paper, we will only use the Cartesian/right fibration convention, and will omit any further mention of coCartesian/left fibrations.}
\[
\func{\Tw(\C)\to \C\times \C^\op}
\] 
which classifies the functor $\func{\Hom_{\C}:\C^\op\times \C\to \Cat_{\infty}}$.

The uses of the twisted arrow category are manifold. It appears, as suggested above, in the analysis of questions of representability throughout the higher categorical literature --- e.g. in \cite{LurieDAGX, HA}. In addition, it is used to explore $\mathbb{E}_k$-monoidal $\infty$-categories in \cite{HA}. In a completely different direction, there is a fundamental connection between twisted arrow categories and $\infty$-categories of spans/correspondences as described in, e.g. \cite[Ch. 10]{DKHSSI},\cite{BarwickK}, and \cite{Dualizing}. Moreover, this approach has been used to tackle questions related to $K$-theory in \cite{BarwickK}. 

The 1-simplices of $\Tw(\C)$ over a pair $(\alpha,\beta)$ comprising a  1-simplex in $\C\times \C^\op$ take the form of coherent diagrams 
\[
\begin{tikzcd}
 a\arrow[r,"f"]\arrow[d,"\alpha"'] & b \\
 a^\prime\arrow[r,"g"'] & b^\prime\arrow[u,"\beta"'] 
\end{tikzcd}
\] 
in $\C$. In practice, this means that that the fibers have $1$-simplices consisting of diagrams  
\[
\begin{tikzcd}
	a\arrow[r,bend left, "f"]\arrow[r,bend right, "g"'] & b
\end{tikzcd}
\]
that commute up to a chosen 2-cell, i.e. the morphisms in the fiber can be easily interpreted as two-cells $\func{f\nat[\sim] g}$ in $\C$. More generally, the $n$-simplices of $\Tw(\C)$ are given by maps $\func{\Delta^n\star (\Delta^n)^\op\to \C}$, and the projections to $\C$ and $\C^\op$ are induced by the inclusions $\begin{tikzcd}
\Delta^n\arrow[r] & \Delta^n\star(\Delta^n)^\op & (\Delta^n)^\op\arrow[l]
\end{tikzcd}$. 

\subsection*{Towards an enhanced twisted arrow category}
\addcontentsline{toc}{subsection}{Towards an enhanced twisted arrow category}

Given an $\infty$-bicategory $\CC$, presented as a fibrant scaled simplicial set, our aim will be to construct an $\infty$-category $\Tw(\CC)$ together with a Cartesian fibration $\Tw(\CC)\to \C\times \C^\op$ which classifies the composite functor
\[
\func{\C^\op \times \C \to \CC^\op \times \CC \to \goth{C}\!\on{at}_{\infty},}
\]
where $\goth{C}\!\on{at}_{\infty}$ is the $(\infty,2)$-category of $\infty$-categories. The first step towards this construction is to decide what the 1-simplices of $\Tw(\CC)$ should be. We would still like these to be something like diagrams 
\[
\begin{tikzcd}
a\arrow[r,"f"]\arrow[d,"\alpha"'] & b \\
a^\prime\arrow[r,"g"'] & b^\prime\arrow[u,"\beta"'] 
\end{tikzcd}
\] 
in $\CC$, e.g. 3-simplices. 

When $\alpha$ and $\beta$ are identities, we would like these 3-simplices to encode precisely the choice of a 2-morphism $\func{f\nat g}$. However, heuristically such a 3-simplex should, in fact, encode two factorizations: 
\[
\begin{tikzcd}[sep=3em]
a\arrow[r,"f"{name=bar}]\arrow[d,"\id"']\arrow[dr,""{name=foo}] & b \\
|[alias=X]|a^\prime\arrow[r,"g"'] & |[alias=Y]|b^\prime\arrow[u,"\id"'] \arrow[Rightarrow, from=foo,
to=X, shorten <=.1cm,shorten >=.1cm] \arrow[Rightarrow, from=bar,
to=Y, shorten <=.3cm,shorten >=.3cm]
\end{tikzcd} \qquad \qquad 
\begin{tikzcd}[sep=3em]
a\arrow[r,"f"{name=bar}]\arrow[d,"\id"'] & b \\
|[alias=X]|a^\prime\arrow[r,"g"']\arrow[ur,""{name=foo}] & |[alias=Y]|b^\prime\arrow[u,"\id"'] \arrow[Rightarrow, from=bar,
to=X, shorten <=.3cm,shorten >=.3cm] \arrow[Rightarrow, from=foo,
to=Y, shorten <=.1cm,shorten >=.1cm]
\end{tikzcd}
\]
together with the 3-simplex itself, which indicates that the composites --- 2-morphisms $\func{f\nat g}$ --- of both factorizations are equivalent. Fortunately, in the realm of scaled simplicial sets, we can declare certain 2-simplices to be `thin' --- i.e., declare the corresponding 2-morphisms to be invertible. With this in mind, we can force half of each factorization to be invertible 
\[
\begin{tikzcd}[sep=3em]
a\arrow[r,"f"{name=bar}]\arrow[d,"\id"']\arrow[dr,""{name=foo}] & b \\
|[alias=X]|a^\prime\arrow[r,"g"'] & |[alias=Y]|b^\prime\arrow[u,"\id"'] \arrow[phantom, from=foo,
to=X,"\circlearrowleft" description] \arrow[Rightarrow, from=bar,
to=Y, shorten <=.3cm,shorten >=.3cm]
\end{tikzcd} \qquad \qquad 
\begin{tikzcd}[sep=3em]
a\arrow[r,"f"{name=bar}]\arrow[d,"\id"'] & b \\
|[alias=X]|a^\prime\arrow[r,"g"']\arrow[ur,""{name=foo}] & |[alias=Y]|b^\prime\arrow[u,"\id"'] \arrow[Rightarrow, from=bar,
to=X, shorten <=.3cm,shorten >=.3cm] \arrow[phantom, from=foo,
to=Y, "\circlearrowleft" description]
\end{tikzcd}
\]
In this case, we obtain two 2-morphisms $\func{f\nat g}$ and a 3-simplex showing that they are equivalent --- precisely the data that we would like.\footnote{On thing we are glossing over is why we choose the \enquote{lower} 2-simplices as thin, rather than the \enquote{upper} ones. In a nutshell, the reason is that the lower 2-simplices will encode composites, and thus be unique up to contractible choice.} 

This suggests a trial definition for the twisted arrow category of an $\infty$-bicategory.
\begin{quote}
	The twisted arrow $\infty$-bicategory $\mathbb{T}\!\!\on{w}(\CC)$ should have $n$-simplices 
	\[
	\mathbb{T}\!\!\on{w}(\CC)_n:= \Hom_{\scsSet}((\Delta^n\star(\Delta^n)^\op,T),\CC)
	\]
	where $T$ is the scaling given by requiring that, under the identification $\Delta^n\star(\Delta^n)^\op\cong \Delta^{2n+1}$, the simplices $\{i,j,2n+1-j\}$ and $\{j,2n+1-j, 2n+1-i\}$ are thin for $i<j$. 
\end{quote}
However, we would expect such a construction to yield a fibration over the $(\infty,2)$-category $\CC\times \CC^\op$. There are technical difficulties to such a definition, not least the fact that the corresponding Grothendieck construction has not yet appeared in the literature. While we expect this definition to yield a genuine $(\infty,2)$ twisted arrow category, we will restrict ourselves to the examination of the induced functor $\func{\C^\op\times \C\to \Cat_\infty}$

To restrict to the fibration classifying this functor, we use the expected base-change properties of a hypothetical Cartesian Grothendieck construction\footnote{A likely candidate for the kind of fibration such a construction would involve is the \emph{outer Cartesian fibration} of \cite{HarpazEquivModels}.} over an $\infty$-bicategorical base. To wit, we define $\Tw(\CC)$ to be the pullback 
\[
\begin{tikzcd}
\Tw(\CC)\arrow[r]\arrow[d]\arrow[dr,phantom, "\lrcorner", very near start] & \mathbb{T}\!\!\on{w}(\CC)\arrow[d]\\
\C\times \C^\op\arrow[r] & \CC\times \CC^\op 
\end{tikzcd}
\]
In terms of the scaling on $\Delta^n\star (\Delta^n)^\op$, This pullback simply amounts to requiring that every 2-simplex contained within $\Delta^n$ and every 2-simplex contained within $(\Delta^n)^\op$ is thin. Using pushouts by scaled anodyne morphisms of the kind described in \cite[Rmk. 1.17]{HarpazEquivModels}, we can extend this scaling to consider a cosimplicial object $Q(n):=(\Delta^n\star (\Delta^n)^\op, T)$ in scaled simplicial sets, where the non-degenerate thin simplices of $T$ are: 
\begin{itemize}
	\item 2-simplices which factor through $\Delta^n$ or $(\Delta^n)^\op$. 
	\item 2-simplices $\Delta^{\{i,j, 2n+1-k\}}$ and $\Delta^{\{k,2n+1-j,2n+1-i\}}$ for $0\leq i\leq j\leq k \leq n$. 
\end{itemize}

This is the definition of $\Tw(\CC)$ we adopt throughout the present paper,  which is justified by the following result.

\begin{theorem}\label{thm:fibcomparisonintro}
	Let $\CC$ be an $\infty$-bicategory. Then $\Tw(\CC)\to \C \times \C^{\op}$ is a Cartesian fibration classifying the restricted mapping category functor
	\[
		\func{\on{Map}_{\CC}:\C^{\op} \times \C \to \CC^{\op} \times \CC \to \goth{C}\!\on{at}_{\infty}}
	\]
\end{theorem}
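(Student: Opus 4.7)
The plan is to decompose the proof into three steps: (i) identify the fibers of the projection with the mapping $\infty$-categories of $\CC$; (ii) verify the Cartesian fibration property; and (iii) identify the classifying functor.

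For (i), I would analyze the fiber over a pair $(x,y) \in \C \times \C^{\op}$. An $n$-simplex of $\Tw(\CC)$ whose image is degenerate at $(x,y)$ corresponds to a scaled map $Q(n) \to \CC$ whose restrictions to the first $\Delta^n$ and to the second $(\Delta^n)^{\op}$ are degenerate at $x$ and $y$ respectively; this is compatible with the scaling precisely because every $2$-simplex contained in either half is thin. Such maps factor through the quotient of $Q(n)$ collapsing each half to a point, and a direct combinatorial identification of this scaled quotient recovers a standard scaled model for $\on{Map}_{\CC}(x,y)$. For instance, $1$-simplices in the fiber over $(x,y)$ correspond to maps from the scaled $3$-simplex $Q(1)$ (with thin $2$-faces $\{0,1,2\}$ and $\{1,2,3\}$) collapsing the edges $\{0,1\}$ and $\{2,3\}$ to $\id_x$ and $\id_y$, exactly encoding two $1$-morphisms $x \to y$ together with a $2$-morphism between them.

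For (ii), I would first verify that $\Tw(\CC) \to \C \times \C^{\op}$ is an inner fibration by reducing inner horn filling to lifting against a family of scaled anodyne inclusions into $Q(n)$, which the fibrant scaled simplicial set $\CC$ admits. For Cartesian edges above $(\alpha, \beta) \colon (x_0, y_0) \to (x_1, y_1)$ in $\C \times \C^{\op}$ and a vertex $f \colon x_1 \to y_1$ in the fiber over $(x_1, y_1)$, I would construct an explicit Cartesian edge whose source in the fiber over $(x_0, y_0)$ is the bicategorical composite $\beta \circ f \circ \alpha \colon x_0 \to y_0$, where $\beta \colon y_1 \to y_0$ denotes the morphism in $\CC$ underlying $\beta$. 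The Cartesian property is then checked via the fiber identification of (i): the induced map between fibers is pre-composition with $\alpha$ and post-composition with $\beta$, matching the expected transition map between mapping $\infty$-categories.

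For (iii), straightening yields a classifying functor $\C^{\op} \times \C \to \iCat_{\infty}$ whose values on objects are $\on{Map}_{\CC}(x, y)$ by (i) and whose transition maps are pre- and post-composition by (ii); these agree with the restricted functor $\on{Map}_{\CC}$, completing the identification. The principal technical obstacle will be step (ii): certifying that the inclusions arising from inner horn filling and from constructing Cartesian lifts are indeed scaled anodyne with respect to the asymmetric scaling on $Q(\bullet)$. The chosen thin $2$-simplices $\Delta^{\{i,j,2n+1-k\}}$ and $\Delta^{\{k,2n+1-j,2n+1-i\}}$ are designed to make composites well-defined, but one must concretely exhibit the relevant maps as iterated pushouts of scaled anodyne generators. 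Once this is in place, (i) and (iii) should be comparatively routine.
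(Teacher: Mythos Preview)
Your step (ii) is in the right spirit and broadly matches the paper's approach: the fibration property is established by showing that the relevant inclusions into $Q(n)$ are scaled anodyne (the paper packages this into a general ``pivot trick'' lemma on dull subsets of $\mathbf{P}(n)$).

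The genuine gap is in step (iii), and it contaminates step (i) as well. Knowing that the fibers are equivalent to $\on{Map}_{\CC}(x,y)$ and that the transition maps are given by pre- and post-composition is \emph{not} sufficient to conclude that the straightening is equivalent to the restricted mapping functor. Two functors $\C^{\op}\times\C\to\iCat_\infty$ can agree on objects and $1$-morphisms without being naturally equivalent; there is higher coherence data that must be matched. To identify the classifying functor you must produce an actual map of Cartesian fibrations (or, equivalently, a natural transformation) and then show it is a fiberwise equivalence. The paper does exactly this: it constructs an explicit comparison map $\beta\colon \Tw(\CC)\to \on{Un}^+_{\C\times\C^{\op}}(\on{Map}_{\DD})$ by hand (via maps $\zeta_i$ on rigidifications), and then spends the bulk of the argument showing $\beta$ is a fiberwise equivalence by interpolating through the outer Cartesian slice $\CC_{/y}$ of Gagna--Harpaz--Lanari, for which the analogous classification is already known.

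Relatedly, your step (i) is not as routine as you suggest. The quotient $\widetilde{Q}(n)$ you describe is not obviously one of the standard models for the mapping $\infty$-category (these are defined via $\mathfrak{C}^{\on{sc}}[\CC](x,y)$ or via slice constructions), and the paper's identification of the fiber with $\on{Map}_{\CC}(x,y)$ goes through the same span $\CC_{/y}\leftarrow \scr{M}_y\to \Tw(\CC)_y$ used for (iii). In short: the fiber identification and the functor identification are not separable steps here, and both require constructing and analyzing an explicit comparison map rather than arguing pointwise.
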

This is an amalgam of \autoref{thm:TwFibrant} and \autoref{thm:TwClassifiesMap} from the text.

\subsection*{Applications: The category of natural transformations as an end}
\addcontentsline{toc}{subsection}{Applications: The category of natural transformations as an end} 
Once verified that our definition enjoys the desired properties we turn into our main motivation for this paper: understanding the category of natural transformations $\on{Nat}(F,G)$ between functors from an $\infty$-category to an $\infty$-bicategory. To do so, we obtain that expected description of the category of natural transformations as an end.

\begin{theorem}\label{thm:natintro}
	Let $\C$ be a $\infty$-category and $\DD$ an $\infty$-bicategory. Then for every pair of functors $F,G: \C \to \DD$  there exists a  equivalence of $\infty$-categories
	\[
		\func{ \on{Nat}_{\C}(F,G) \to \lim_{\Tw(\C)^{\op}}\on{Map}_{\DD}(F(\mathblank),G(\mathblank))}
	\]
	which is natural in each variable.
\end{theorem}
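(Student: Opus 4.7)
The plan is to use Theorem \ref{thm:fibcomparisonintro} to recast the right-hand side in fibrational terms, construct the comparison $\phi$ explicitly, and then verify it is an equivalence by reducing to the case of simplices. Concretely, applying Theorem \ref{thm:fibcomparisonintro} to $\DD$ exhibits $\on{Map}_{\DD}$ as the functor classified by the Cartesian fibration $\Tw(\DD) \to \D \times \D^{\op}$. Pulling back along the map $\C \times \C^{\op} \to \D \times \D^{\op}$ induced by $F$ and $G^{\op}$, i.e.\ sending $(c, c') \mapsto (F(c), G(c'))$, produces a Cartesian fibration $p: \mathcal{E} \to \C \times \C^{\op}$ classifying $\on{Map}_{\DD}(F(\mathblank), G(\mathblank))$, and a further pullback along the standard projection $\pi: \Tw(\C) \to \C \times \C^{\op}$ yields $\tilde{p}: \tilde{\mathcal{E}} \to \Tw(\C)$. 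By straightening--unstraightening, the limit on the right-hand side of the theorem is identified with the $\infty$-category of Cartesian sections $\on{Fun}^{\on{Cart}}_{\Tw(\C)}(\Tw(\C), \tilde{\mathcal{E}})$.

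To define $\phi$, I interpret a natural transformation $\eta: F \Rightarrow G$ as a suitably scaled functor $\C \times \Delta^1 \to \DD$. For each $n$-simplex $\sigma$ of $\Tw(\C)$, given by a scaled map $Q(n) \to \C$, the structure of the cosimplicial object $Q(\bullet)$ supplies a canonical map $Q(n) \to \C \times \Delta^1$ which, precomposed with $\eta$, yields an $n$-simplex of sections. Heuristically, each twisted arrow $\alpha: c \to c'$ produces the naturality square of $\eta$, furnishing an object $\eta_{c'} \circ F(\alpha) \simeq G(\alpha) \circ \eta_c$ in the fiber $\on{Map}_{\DD}(F(c), G(c'))$; higher coherences of $\eta$ supply the higher simplices, and the Cartesian property of the resulting section follows from the explicit form of $\tilde{p}$.

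The main obstacle is verifying that $\phi$ is an equivalence. My strategy is to exploit that both assignments---$\C \mapsto \on{Nat}_{\C}(F,G)$ and $\C \mapsto \on{Fun}^{\on{Cart}}_{\Tw(\C)}(\Tw(\C), \tilde{\mathcal{E}})$---send colimits of $\infty$-categories to limits of $\infty$-categories (with $F, G$ pulled back along the inclusions). This reduces the claim to the case $\C = \Delta^n$, where both sides admit explicit descriptions as mapping $\infty$-categories in $\scsSet$: the left-hand side via the scaled model for $\Fun(\Delta^n, \DD)$, and the right-hand side via the cosimplicial object $Q(\bullet)$. The remaining verification amounts to identifying the map of scaled simplicial sets that induces $\phi$ as a bicategorical equivalence, which can be accomplished by decomposing it into a finite sequence of pushouts along scaled anodyne morphisms of the type described in \cite[Rmk.~1.17]{HarpazEquivModels}. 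Naturality of $\phi$ in $F$ and $G$ is then immediate from the construction.
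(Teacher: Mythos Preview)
Your high-level strategy matches the paper's: recast the right-hand side as Cartesian sections via Theorem~\ref{thm:fibcomparisonintro}, build an explicit comparison map, and reduce to $\C=\Delta^n$ by exploiting that both sides take suitable colimits in $\C$ to homotopy limits. The paper packages the comparison as a map of Cartesian fibrations $\Theta_X:\Tw(\DD^X)\to\scr{L}_X$ over $\D^X\times(\D^X)^{\op}$, which yields naturality in $(F,G)$ automatically; your fiberwise description is fine but slightly looser.

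The genuine gap is your endgame. After reducing to $\C=\Delta^n$, the two sides are governed by different cosimplicial objects in $\scsSet$: the left by $[m]\mapsto Q(m)\times\Delta^n$ and the right by $[m]\mapsto Q(\Delta^m\times\Tw(\Delta^n))$. The comparison between these is \emph{not} induced by a scaled anodyne map, and no decomposition of the kind you describe is visible for general $n$. The paper supplies two further ideas that your sketch omits. First, it reduces from arbitrary $\Delta^n$ to $n\in\{0,1\}$ by showing that the inner-horn inclusion $\Tw(\Lambda^n_i)\to\Tw(\Delta^n)$ is cofinal (Lemma~\ref{lem:innercofinal}), so that restriction along it preserves the relevant limit; combined with the colimit-to-limit behavior this gives an induction on $n$. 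Second, for $n=1$ it introduces an interpolating cosimplicial object $\scr{R}$ sitting between $\scr{Q}$ and $\scr{T}$, proves that $\scr{Q}\to\scr{R}$ is levelwise scaled anodyne (Proposition~\ref{prop:scaledanodyne}), and then shows that the retraction $\scr{R}\to\scr{T}$ induces a fiberwise equivalence via explicit marked homotopies rather than via anodyne maps. Without these two steps your claim that the verification for $\Delta^n$ ``can be accomplished by decomposing it into a finite sequence of pushouts along scaled anodyne morphisms'' is unsubstantiated, and for general $n$ would amount to a substantial new argument.
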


This result allows us to analyze in greater detail the theory of weighted colimits of $\iCat_{\infty}$-valued functors exposed in \cite{GHN}, showing that this definition coincides with the definition provided by the first author in \cite{AG20}. The proof of this fact together with the results of \cite{AG20} constitute a partial answer to a series of conjectures involving $\infty$-bicategorical colimits and a categorified theory of cofinality introduced by the authors in \cite{AGS20}.

\subsection*{Structure of the paper}
\addcontentsline{toc}{subsection}{Structure of the paper}

The paper will be laid out as follows. We begin with a preliminary section, which lays out the notational conventions we follow, and explains several technical constructions and lemmata which we use throughout the paper. In particular, we give basic definitions for cosimplicial objects, state and prove a general lemma on subsets $K\subset \Delta^n_\dagger$ of a scaled $n$-simplex such that $\func{K\to \Delta^n_\dagger}$ is scaled anodyne, and define a structure on a poset sufficient for us to give a clean description of the simplicial mapping spaces in a quotient of its nerve. 

From there, the work starts in earnest. In \autoref{sec:TwDefnFib}, we give the formal definition of $\Tw(\CC)$, and prove that $\Tw(\CC)\to \C\times \C^\op$ is a Cartesian fibration, making use of the aforementioned lemma on simplicial subsets of scaled $n$-simplices. We then turn to \autoref{sec:classification}, in which we prove that this Cartesian fibration classifies precisely the enhanced mapping functor 
\[
\func{\C^\op \times \C\to \CC^\op\times \CC \to \goth{C}\!\on{at}_{\infty}.}
\]
This proof is highly technical, and freely uses results from \cite{LurieGoodwillie} and \cite{HarpazEquivModels}. 

In \autoref{sec:Nat}, our attention then turns to the true aim of the paper, a proof of the proposition that, given two functors $F,G:\C\to\DD$ from an $\infty$-category to an $(\infty,2)$-category, the $\infty$-category of natural transformations between them can be expressed as a limit 
\[
\on{Nat}(F,G)\simeq \lim_{\Tw(\C)^\op} \on{Map}_{\DD}(F(-),G(-)),
\]
i.e., an end. Once again the proof is highly technical, making use of a wide variety of techniques native to the contexts of scaled simplicial sets and marked simplicial sets. In particular, the proof relies heavily on a sort of d\'evissage --- one in which we reduce from the case of a general $\infty$-category (indeed, simplicial set) $\C$ to the cases $\C=\Delta^0$ and $\C=\Delta^1$. 

We conclude with applications of this theorem, where we upgrade several results appearing in \cite{GHN}. 

\subsection*{Acknowledgments}
\addcontentsline{toc}{subsection}{Acknowledgments}

F.A.G. would like to acknowledge the support of the VolkswagenStiftung through the LichtenbergProfessorship Programme while he conducted this research. W.H.S was supported by Universit\"at Hamburg during the early stages of this work, and by the NSF Research Training Group at the University of Virginia (grant number DMS-1839968) during the later stages.

\section{Preliminaries}
We begin by presenting some background information necessary for the paper, and proving some general lemmata which will help simplify the technical arguments in later sections. We will not, in general, recapitulate material from \cite{HTT} and \cite{LurieGoodwillie}, as doing so would greatly extend the length of the present document for dubious benefit. In particular, we will assume that the reader is familiar with the theories of quasi-categories, Cartesian fibrations, and scaled simplicial sets, as well as the attendant model structures. We will, however, briefly collect the notations and conventions we will use for these before embarking on the preliminaries proper. 

\begin{notation}[Model categories]
	We denote by $\Set_\Delta$ the category of simplicial sets, $\Set_\Delta^+$ the category of marked simplicial sets, and $\scsSet$ the category of scaled simplicial sets. We consider these to be equipped with the Joyal, Cartesian, and bicategorical model structures, respectively. Where context clarifies the meaning, an unadorned Latin capital --- e.g. $X$ --- may be used to denote an object of any of these categories. When it is necessary to specify a marking or a scaling on $X\in \Set_\Delta$, we do so by writing a superscript --- e.g. $X^\dagger$ --- for a marking, and a subscript --- e.g. $X_\dagger$ --- for a scaling. In particular, the subscripts $\sharp$ and $\flat$ will denote the maximal and minimal scalings, respectively.
\end{notation}

\begin{notation}[Rigidification]
	We denote by $\Cat_{\Delta}$ the category of simplicial set enriched categories, and by $\Cat_{\msSet}$ the category of marked simplicial set enriched categories. We denote by $\func{\mathfrak{C}:\Set_\Delta\to \Cat_\Delta}$ the rigidification functor, and by $\func{\mathfrak{C}^{\on{sc}}:\scsSet\to \Cat_{\msSet}}$ its scaled variant. In the presence of the sub- and superscript convention above, we will conventionally denote 
	\[
	\mathfrak{C}[X](x,y)^\dagger:=\mathfrak{C}^{\on{sc}}[X_\dagger](x,y)
	\]
	for any $x,y\in X$. 
\end{notation} 

\begin{convention}[Fibrant objects]
	By an $\infty$-category, we will mean an $(\infty,1)$-category, presented as either a quasi-category or a fibrant marked simplicial set. We will, wherever possible, use calligraphic capitals --- e.g. $\scr{C}$ --- for $\infty$-categories. 
	
	By an $\infty$-bicategory, we will mean an $(\infty,2)$-category presented as a fibrant scaled simplicial set.\footnote{The potential for confusion between $\infty$-bicategories and weak $\infty$-categories created by the terminology of \cite{LurieGoodwillie} is obviated by \cite[Thm. 5.1]{HarpazEquivModels}.} Where possible, we will denote $\infty$-bicategories by blackboard-bold capitals --- e.g. $\mathbb{C}$. 
\end{convention}

\subsection{Cosimplicial objects}
\begin{definition}
	Let $C$ be an ordinary 1-category. A functor $\func{F: \Delta \to C}$ will be called a cosimplicial object in $C$.
\end{definition}

\begin{notation}
	Given $[n] \in \Delta$ we will denote its image under $F$ by $F(n)$.
\end{notation}

In the following sections, we will make extensive use of cosimplicial objects with target a cocomplete category $C$. Namely, those that can be “freely extendend” by colimits. Indeed by taking the left Kan extension along the Yoneda embedding $\func{\mathcal{Y}:\Delta \to \on{Set}_{\Delta}}$ we can produce a pair of adjoint functors
\[
	\begin{tikzcd}
	\mathcal{Y}_{!}F:\operatorname{Set}_\Delta \arrow[r, shift left]    & C:F^{*} \arrow[l, shift left]
	\end{tikzcd}
\]
where for every $c \in C$ the $n$-simplices of $F^{*}(c)$ are given by maps $F(n) \to c$.

\begin{example}
	Let $C=\on{Set}_{\Delta}$ and let $X \in  \on{Set}_{\Delta}$. We define a cosimplicial object
	\[
		\func{(\mathblank)\times X: \Delta \to \on{Set}_{\Delta};[n] \mapsto \Delta^n \times X.}
	\]
	The right adjoint to this cosimplicial object sends each $\infty$-category $Y$ to the functor $\infty$-category $\on{Fun}(X,Y)$.\footnote{More generally, the right adjoint gives the internal hom of $\Set_\Delta$.}
\end{example}

\begin{notation}
	Let $C$ be a cocomplete category and $F$ a cosimplicial object on $C$. We set the following notation
	\[
		\partial F^n= \colim_{\Delta^I \to \partial \Delta^n}F(I).
	\]
\end{notation}

\subsection{Scaled anodyne maps from dull subsets}
\begin{definition}\label{def:dull}
	Let $\mathbf{P}(n)$ denote the power set of $[n]$ with $n\geq 0$. We say that $\mathcal{A} \subsetneq \mathbf{P}(n)$ is \emph{dull} if the following conditions are satisfied:
	\begin{enumerate}
		\item It does not contain the empty set, $\emptyset \notin \mathcal{A}$
		\item \label{dullcond:noi} There exists $0<i<n$ such that $i \notin S$ for every $S \in \mathcal{A}$.
		\item It contains a pair of singletons $\set{u},\set{v}\in \mathcal{A}$ such that $u<i<v$.
		\item \label{dullcond:nointer} For every $S,T \in \mathcal{A}$ it follows that $S \cap T =\emptyset$.	
	\end{enumerate}
	We will call the element $i$ in condition (\ref{dullcond:noi}), \emph{the pivot point}.
\end{definition}

\begin{definition}
	Let $\mathcal{A} \subsetneq \mathbf{P}(n)$ be a dull subset. Given an scaled $n$-simplex $\Delta^n_{\dagger}$, we define 
	\[
		\scr{S}^{\mathcal{A}}=\bigcup\limits_{S \in \mathcal{A}}\Delta^{[n]\setminus S} \subsetneq \Delta^n
	\]
	and denote $\scr{S}$ equipped with the induced scaling by $\scr{S}^{\mathcal{A}}_{\dagger}$. When the choice of dull subset is clear, we will use the abusive notation $\scr{S}_{\dagger}$.
\end{definition}

\begin{definition}
	Let $\mathcal{A}\subsetneq \mathbf{P}(n)$ be a dull subset. We call $X\in \mathbf{P}(n)$ an \emph{$\mathcal{A}$-basal set} if it contains precisely one element from each $S\in \mathcal{A}$. We denote the set of all $\mathcal{A}$-basal sets by $\on{Bas}(\mathcal{A})$.
\end{definition}

\begin{remark}
	Note that our definitions guarantee both that $\on{Bas}(\mathcal{A})\neq \emptyset$, and that all $\mathcal{A}$-basal sets have the same cardinality.
\end{remark}

\begin{definition}
	Given a dull subset $\mathcal{A}$, we define $\scr{M}_{\mathcal{A}}$ to be the set of subsets $X \in \mathbf{P}(n)$ satisfying the following conditions:
	\begin{itemize}
		\item[$A1)$] $X$ contains the pivot point, $i \in X$.
		\item[$A2)$] The simplex $\sigma_X:\Delta^X \to \Delta^n$ does not factor through $\scr{S}$.
	\end{itemize}
	We set $\kappa_{\mathcal{A}}:=\min\set{|X| \given X \in \mathcal{M}_{\mathcal{A}}}$ and define, for every $\kappa_{\mathcal{A}}\leq j \leq n$, the subset $\mathcal{M}_{\mathcal{A}}^{j}\subset \mathcal{M}_{\mathcal{A}}$ consisting of those sets of cardinality at most $j$. 
\end{definition}

\begin{remark}
	To ease the notation, when the choice of dull subset it is clear we will  drop the subscript $\mathcal{A}$ in $\mathcal{M}_{\mathcal{A}}$ and $\kappa_{\mathcal{A}}$.
\end{remark}

\begin{lemma}\label{lem:kappa}
	Let $\mathcal{A}$ be a dull subset of $\mathbf{P}(n)$ with pivot point $i$. Then it follows that  
	\[
		\mathcal{M}^{\kappa}=\set{X_0 \cup \set{i} \enspace \given X_0 \in \on{Bas}(\mathcal{A})}.
	\]
\end{lemma}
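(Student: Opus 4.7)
The plan is to reduce the statement to a purely combinatorial claim about subsets of $[n]$. The first step is to unpack condition $A2$: the simplex $\sigma_X\colon \Delta^X \to \Delta^n$ factors through $\scr{S}^{\mathcal{A}} = \bigcup_{S\in\mathcal{A}}\Delta^{[n]\setminus S}$ if and only if $X\subseteq [n]\setminus S$ for some $S\in \mathcal{A}$, i.e.\ if and only if $X\cap S = \emptyset$ for some $S\in\mathcal{A}$. Hence $X\in\mathcal{M}_{\mathcal{A}}$ precisely when $i\in X$ and $X\cap S\neq \emptyset$ for every $S\in\mathcal{A}$. Since by condition (\ref{dullcond:noi}) of \autoref{def:dull} the pivot point $i$ lies in no member of $\mathcal{A}$, this is equivalent to the statement that $X\setminus \{i\}$ meets every $S\in \mathcal{A}$.

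Next I would establish the inequality $\kappa \leq |\mathcal{A}|+1$ together with the inclusion $\supseteq$. For any $X_0\in\on{Bas}(\mathcal{A})$, the set $X=X_0\cup\{i\}$ satisfies $A1$ trivially and by construction intersects every $S\in\mathcal{A}$, so it belongs to $\mathcal{M}_{\mathcal{A}}$. Because $i\notin S$ for any $S\in\mathcal{A}$, the element $i$ is not in $X_0$, so $|X| = |X_0|+1$. Using condition (\ref{dullcond:nointer}) (pairwise disjointness of the elements of $\mathcal{A}$) together with the basal set condition gives $|X_0| = |\mathcal{A}|$, hence $|X| = |\mathcal{A}|+1$, proving both that $\kappa \leq |\mathcal{A}|+1$ and that every such $X_0\cup\{i\}$ is an element of $\mathcal{M}^{\kappa}$ (once $\kappa$ is shown to equal $|\mathcal{A}|+1$).

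The reverse inclusion and the matching lower bound on $\kappa$ follow from a pigeonhole argument. Suppose $X\in \mathcal{M}^{\kappa}$ and set $X' = X\setminus\{i\}$. For each $S\in\mathcal{A}$, choose an element $a_S\in X'\cap S$, which is possible by the reformulation of $A2$ above. The pairwise disjointness of the $S\in\mathcal{A}$ forces the $a_S$ to be distinct, so $|X'|\geq |\mathcal{A}|$ and $|X|\geq |\mathcal{A}|+1$. Combined with the upper bound from the previous paragraph, this yields $\kappa = |\mathcal{A}|+1$ and $|X'| = |\mathcal{A}|$. Then $X'$ must equal $\{a_S\mid S\in\mathcal{A}\}$, which is by definition an $\mathcal{A}$-basal set, giving $X = X_0\cup\{i\}$ with $X_0\in\on{Bas}(\mathcal{A})$.

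There is no real obstacle here; the argument is elementary combinatorics once $A2$ is correctly translated. The only subtlety worth flagging in the write-up is the use of the pivot condition (to guarantee $i\notin X_0$, so that the cardinality increments by exactly one) and of the pairwise disjointness condition (to ensure both that basal sets have cardinality $|\mathcal{A}|$ and that one-element-per-$S$ choices produce distinct elements). Both of these are built into \autoref{def:dull}, so the proof is a direct application of the definitions.
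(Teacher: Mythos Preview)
Your argument is correct and complete; the paper itself simply states ``Left as an exercise,'' so your write-up is precisely the elementary combinatorial verification the authors intended the reader to supply. The only thing worth tightening is the final step: after concluding $|X'|=|\mathcal{A}|$ and $X'=\{a_S\mid S\in\mathcal{A}\}$, you should note explicitly that pairwise disjointness gives $X'\cap S=\{a_S\}$ for each $S$, so that $X'$ meets the definition of an $\mathcal{A}$-basal set on the nose rather than merely having the right cardinality.
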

\begin{proof}
	Left as an exercise.
\end{proof}

\begin{notation}
	Let $\mathcal{A}\subsetneq \mathbf{P}(n)$ be a dull subset with pivot point $i$. Given an $\mathcal{A}$-basal set $X$, we will denote by $\ell_{i-1}^X$ $\ell_{i}^X$ the pair of consecutive elements in $X$ such that $\ell_{i-1}^X<i<\ell_{i}^X$. 
\end{notation}

\begin{lemma}[The pivot trick]\label{lem:pivot}
	Let $\mathcal{A}\subsetneq \mathbf{P}(n)$ be a dull subset with pivot point $i$, and let $\Delta^n_{\dagger}$ be an scaled simplex.  For  $Z\in \on{Bas}(\mathcal{A})$ suppose that the following condition holds.
	\begin{itemize}
		\item For every $r,s \in [n]$ such that $\ell_{i-1}^Z\leq r <i <s \leq \ell_{i}^Z$ the simplex $\set{r,i,s}$ is scaled in $\Delta^n_{\dagger}$.
	\end{itemize}
	Then $\scr{S}_{\dagger} \to \Delta^n_{\dagger}$ is scaled anodyne.
\end{lemma}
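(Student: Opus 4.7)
The plan is to construct a filtration
\[
\scr{S}_{\dagger} = K_{\kappa-1} \subseteq K_{\kappa} \subseteq \cdots \subseteq K_n = \Delta^n_{\dagger},
\]
where $K_j := \scr{S}_{\dagger} \cup \bigcup_{X \in \mathcal{M}^j} \Delta^X$ carries the scaling inherited from $\Delta^n_{\dagger}$, and to argue that each inclusion $K_{j-1} \hookrightarrow K_j$ is scaled anodyne. The identification $K_n = \Delta^n_{\dagger}$ holds because every simplex $X \subseteq [n]$ either factors through $\scr{S}$, or contains $i$ and lies in $\mathcal{M}$, or has $X \cup \{i\} \in \mathcal{M}$ (so $\Delta^X$ sits inside an already-added simplex).

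At stage $j$, for each $X \in \mathcal{M}^j \setminus \mathcal{M}^{j-1}$, I would fill $\Delta^X$ by the inner horn $\Lambda^X_i \hookrightarrow \Delta^X$ at the pivot vertex $i \in X$. Writing $X = \{x_0 < \cdots < x_m\}$ with $x_p = i$, this is a generating scaled anodyne morphism provided that the 2-face $\{x_{p-1}, i, x_{p+1}\}$ is scaled in $\Delta^n_{\dagger}$. The horn $\Lambda^X_i$ sits inside $K_{j-1}$ by induction: each non-missing face $X \setminus \{x_k\}$ (for $k \neq p$) still contains the pivot $i$, so it is either in $\mathcal{M}^{j-1}$ (hence adjoined at an earlier stage) or factors through $\scr{S}$. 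The only face not already present is $X \setminus \{i\}$, which still meets every $S \in \mathcal{A}$ since $i \notin S$ for any such $S$; this face is freshly produced by the horn filler.

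For the scaling condition, I invoke \autoref{lem:kappa}: the minimal elements of $\mathcal{M}$ are precisely $X_0 \cup \{i\}$ for $X_0 \in \on{Bas}(\mathcal{A})$. In the case $X_0 = Z$, the neighbors of $i$ in $Z \cup \{i\}$ are exactly $\ell_{i-1}^Z$ and $\ell_i^Z$, so the hypothesis directly provides the required scaled 2-face. For $X \in \mathcal{M}$ of larger cardinality, one argues that the elements of $X$ immediately flanking $i$ lie within the intervals $[\ell_{i-1}^Z, i-1]$ and $[i+1, \ell_i^Z]$ on which the hypothesis supplies scalings, using the pairwise-disjointness of $\mathcal{A}$ together with the fact that $X$ must meet every $S \in \mathcal{A}$ (in particular the members containing $\ell_{i-1}^Z$ and $\ell_i^Z$).

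The main obstacle is precisely this last combinatorial check: showing that for every $X \in \mathcal{M}$ the flanking elements of $i$ stay inside the hypothesis's range, so that an inner horn filler at $i$ with the desired scaled focal 2-face is always available. Once this accounting is in place, the filtration argument assembles into a sequence of scaled anodyne generators, yielding the conclusion.
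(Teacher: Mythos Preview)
Your filtration and horn-filling scheme is exactly the paper's argument: build up from $\scr{S}_\dagger$ by attaching the simplices $\Delta^X$ for $X\in\mathcal{M}$ in order of cardinality, each time via the inner horn $\Lambda^X_i\hookrightarrow\Delta^X$ with the pivot as the distinguished vertex. Two small corrections are in order.

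First, an indexing slip: since $\mathcal{M}^j$ records \emph{cardinality} $\leq j$, the full set $[n]$ (of cardinality $n+1$) is not in $\mathcal{M}^n$, so $K_n=\Lambda^n_i$, not $\Delta^n_\dagger$. The last step $\Lambda^n_i\to\Delta^n_\dagger$ is one further inner-horn fill (equivalently, extend your filtration to $K_{n+1}$). This is what the paper writes explicitly.

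Second, and more substantively, your scaling verification does not go through for a \emph{fixed} $Z$. Take $n=6$, pivot $i=3$, $\mathcal{A}=\{\{0\},\{1,5\},\{6\}\}$, and $Z=\{0,1,6\}$: then $\ell^Z_{i-1}=1$, but $X=\{0,3,5,6\}\in\mathcal{M}$ has left flanking element $0<\ell^Z_{i-1}$, so the triangle $\{0,3,5\}$ is not covered by the hypothesis for that $Z$. The hypothesis is a universal one (for every $Z\in\on{Bas}(\mathcal{A})$), and the correct move is to choose $Z$ depending on $X$: since $X\cap S\neq\emptyset$ for every $S\in\mathcal{A}$ and the $S$'s are pairwise disjoint, you may extract a basal $Z'\subseteq X$. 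Then $\ell^{Z'}_{i-1},\ell^{Z'}_i\in X$ force $x_{p-1}\in[\ell^{Z'}_{i-1},i-1]$ and $x_{p+1}\in[i+1,\ell^{Z'}_i]$, and the hypothesis for $Z'$ supplies the required scaled $2$-face. The paper glosses over this step with a single sentence; your sketch gestured at the right ingredients (disjointness, $X$ meeting every $S$) but did not assemble them into the choice of $Z'\subseteq X$.
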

\begin{proof}
	For ease of notation we will drop the subscript denoting the scaling in this proof, assuming that all simplicial subsets are equipped with the scaling inherited from $\Delta^n_\dagger$. We define for $\kappa \leq j \leq n$
	\[
		Y_{j}=Y_{j-1}\cup \bigcup\limits_{X \in \mathcal{M}^j}\sigma_X,
	\]
	where we set $Y_{\kappa-1}=\scr{S}$. This yields a filtration
	\[
		\func{\scr{S}\to Y_{\kappa} \to \cdots \to Y_{n-1} \to \Lambda^n_i \to \Delta^n.}
	\]
	where $\Lambda^n_i=Y_n$. We will show that each step of this factorization is scaled anodyne. 
	
	Let $X \in \mathcal{M}^{j}$ with $\kappa \leq j \leq n-1$. Let us note that as a consequence \autoref{lem:kappa} we obtain a  pullback diagram
	\[
		\begin{tikzcd}[ampersand replacement=\&]
			\Lambda^X_{i} \arrow[d] \arrow[r] \arrow[dr,phantom,"\lrcorner",very near start] \& \Delta^X  \arrow[d,"\sigma_X"] \\
			Y_{j-1} \arrow[r]\&  Y_{j}
		\end{tikzcd}
	\]
	Additionally, the condition of the lemma guarantees that $i$ together with its neighboring elements in $\Delta^X$ form a scaled 2-simplex. Thus, the map $\func{\Lambda^X_i\to \Delta^X}$ is scaled anodyne, allowing us to add $\Delta^X$. It also follows from our definitions that given $X,Y \in \mathcal{M}^{j}$ such that $X \neq Y$ then $\sigma_X \cap \sigma_Y \in Y_{j-1}$, so that we can add the $j$-simplices $\Delta^X$ to $Y_{j-1}$ irrespective of their order. This shows that $Y_{j-1}\to Y_j$ is scaled anodyne.
\end{proof}

\begin{remark}\label{rmk:scaledanodyneduals}
	It is worth noting that the procedure outlined in \autoref{lem:pivot} only makes use of a special subset of the scaled anodyne maps: that generated by the inner horn inclusions 
	\[
	\func{\Lambda^n_i\to \Delta^n}
	\]
	where $\Delta^{\{i-1,i,i+1\}}$ is scaled. Significantly, while the class of scaled anodyne maps is not, in general, self-dual (i.e. $\func{f^\op: X^\op\to Y^\op}$ need not be scaled anodyne when $\func{f:X\to Y}$ is), the class generated by these scaled inner horn inclusions is. We will make use of this property to further simplify applications of \autoref{lem:pivot}.
\end{remark}

\subsection{Poset partitions}

In \autoref{sec:classification} it will be necessary for us to consider mapping spaces in quotients of nerves of posets, as well as their scaled analogues. While these mapping spaces are quite straightforward to describe, we here collect a number of descriptions and notations so as to better facilitate the flow of the later sections of the paper. 

\begin{definition}\label{defn:orderedpartitions}
	Let $J$ be a finite poset, and denote by $\scr{J}$ its nerve. We call a pair of subsets $J_0,J_1$ an \emph{ordered partition} of $J$ if the following three conditions are satisfied. 
	\begin{itemize}
		\item $J_0\cup J_1=J$.
		\item $J_0\cap J_1=\emptyset$.
		\item For every $x\in J_0$ and every $y\in J_1$, we have either $x<y$ or $x$ and $y$ are incomparable.  
	\end{itemize}  
	For such an ordered partition, we denote by $\scr{J}^R$ the quotient
	\[
	\scr{J}^R:=\scr{J}\coprod_{\Nerv(J_1)}\Delta^0, 
	\]
	and by $\widetilde{\scr{J}}$ the quotient 
	\[
	\widetilde{\scr{J}}:=\Delta^0\coprod_{\Nerv(J_0)}\scr{J}\coprod_{\Nerv(J_1)}\Delta^0.
	\]
	We denote the two objects of $\widetilde{\scr{J}}$ by $\ast_0$ and $\ast_1$, and denote the `collapse point' of $\scr{J}^R$ by $\ast_1$. 
\end{definition}

\begin{remark}
	Note that the definition of an ordered partition is symmetric --- the opposite of an ordered partition is still an ordered partition. It is for this reason that we only consider the quotient $\scr{J}^R$ and not some analogous $\scr{J}^L$ as well. 
\end{remark}

\begin{example}\label{ex:orderedpartofQ}
	In the sequel we will make extensive use of a cosimplicial object $Q(n):=\Delta^n\star (\Delta^n)^\op$. Each level of this cosimplicial object admits a canonical ordered partition. Under the identification $Q(n)\cong \Delta^{2n+1}=\Nerv([2n+1])$, this ordered partition is given by $J_0=[n]$ and $J_1=\{n+1,\ldots, 2n+1\}$. We will abusively denote each of these ordered partitions by $(J_0^Q,J_1^Q)$.
\end{example}

\begin{construction} 
	Given a finite poset $J$ and an ordered partition $(J_0,J_1)$, we construct a poset $P_{\scr{J}}$ as follows. The objects of $P_{\scr{J}}$ are totally ordered subsets $S\subset J$ such that $\min(S)\in J_0$ and $\max(S)\in J_1$, ordered by inclusion. We will denote the nerve by $\scr{P}_{\scr{J}}:=\Nerv(P_{\scr{J}})$.  
	
	Let $\underline{S}:=(S_0\subset\cdots S_k)$ be a $k$-simplex of $\scr{P}_{\scr{J}}$. Set $s_0^R:=\min (S_0\cap J_1)$. We define the \emph{right truncation} of $\underline{S}$ to be the simplex 
	\[
	\underline{S}^R:=(S_0^R\subset \cdots \subset S_k^R) 
	\]
	where $S_\ell^R:=\{s \in S_\ell\mid s\leq s_0^R\}$. We similarly define $s_0^L:=\max(S_0 \cap J_0)$ and its correspondig \emph{left truncation} $\underline{S}^L$ where $S_{\ell}^L:=\set{s \in S_{\ell} \given s \geq s_0^L}$. The \emph{ambidextrous truncation} $\underline{S}^A$ is obtained by taking both the left and right truncation of $S$. We can then define two equivalence relations on $\scr{P}_{\scr{J}}$. 
	\begin{enumerate}
		\item We say that $k$-simplices $\underline{S}$ and $\underline{T}$ are \emph{right equivalent}, and we write 
		\[
		\underline{S}\sim_R \underline{T}, 
		\] 
		when $\underline{S}^R=\underline{T}^R$. 
		\item We say that $k$-simplices $\underline{S}$ and $\underline{T}$ are \emph{ambi-equivalent}, and we write 
		\[
		\underline{S}\sim_A \underline{T}, 
		\] 
		when $\underline{S}^A=\underline{T}^A$. 
	\end{enumerate}  
	Note that both of these equivalence relations respect the face and degeneracy maps, so that the quotients of $\scr{P}_{\scr{J}}$ by $\sim_R$ and $\sim_A$ are simplicial sets. 
	
	Finally, for any $j\in J_0$, we define $P_{\scr{J}}^j\subset P_{\scr{J}}$ to be the full subposet on those sets $S$ with $\min(S)=j$.  Note that $\sim_R$ descends to an equivalence relation on $\scr{P}_{\scr{J}}^j$. 
\end{construction}

We can then characterize the desired mapping spaces of $\mathfrak{C}[\scr{J}]$ in terms of the above posets.

\begin{lemma}\label{lem:posetquotientmappingspaces}
	Let $J$ be a finite poset, and $(J_0,J_1)$ and ordered partition of $J$. Then 
	\begin{enumerate}
		\item for every $j\in J_0$ there is an isomorphism 
		\[
		\mathfrak{C}[\scr{J}^R](j,\ast_1)\cong (\scr{P}_{\scr{J}}^j)_{/\sim_R}.
		\]
		\item There is an isomorphism 
		\[
		\mathfrak{C}[\widetilde{\scr{J}}](\ast_0,\ast_1)\cong (\scr{P}_{\scr{J}})_{/\sim_A}.
		\]
	\end{enumerate}
\end{lemma}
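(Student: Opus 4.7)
The plan is to exploit the fact that $\mathfrak{C}$ is a left adjoint, so it preserves the pushouts defining $\scr{J}^R$ and $\widetilde{\scr{J}}$, reducing both claims to explicit computations of mapping spaces in pushouts of simplicial categories obtained by collapsing a full subcategory to a point.

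First I would recall the standard description of rigidifications of poset nerves: for a poset $J$ and $x \leq y$ in $J$, the mapping space $\mathfrak{C}[\Nerv(J)](x,y)$ is the nerve of the poset of totally ordered chains $S \subset J$ with $\min(S) = x$, $\max(S) = y$, ordered by inclusion, with composition given by union. A crucial preliminary observation is that the ordered partition axiom forces $\mathfrak{C}[\Nerv(J_1)]$ to be precisely the \emph{full} simplicial subcategory of $\mathfrak{C}[\scr{J}]$ on objects of $J_1$: for $y, y' \in J_1$, any intermediate $x \in J$ with $y \leq x \leq y'$ must lie in $J_1$, since $y \leq x$ together with $x \in J_0$ would contradict the partition axiom. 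Because we are collapsing a full subcategory, the pushout can be computed levelwise in $\on{Cat}$, yielding for each $j \in J_0$ the description
\[
\mathfrak{C}[\scr{J}^R](j, \ast_1) \;\cong\; \Big(\coprod_{y \in J_1} \mathfrak{C}[\scr{J}](j, y)\Big) \big/ \sim ,
\]
where $\underline{S} \sim \underline{S} \cup \underline{T}$ for any chain of chains $\underline{T}$ lying entirely in $J_1$ with $\min(T_\ell) = \max(S_\ell)$ for each $\ell$.

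The core combinatorial step is to identify this equivalence with $\sim_R$. Given $\underline{S}$, define $T_\ell := \set{s \in S_\ell \given s \geq s_0^R}$; the ordered partition axiom ensures $\underline{T}$ lies entirely in $J_1$ and is composable with $\underline{S}^R$, while $S_\ell$ being totally ordered around $s_0^R$ yields $S_\ell^R \cup T_\ell = S_\ell$. Hence $\underline{S}^R \sim \underline{S}$. Conversely, $\underline{S}^R$ is invariant under the generating moves: for $\underline{U}$ in $J_1$ with $\min(U_\ell) = \max(S_\ell) \geq s_0^R$, the pivot $s_0^R$ is preserved, and every element of $\underline{U}$ lies at or above $s_0^R$, contributing nothing to the right truncation beyond $s_0^R$ itself. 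This produces the required isomorphism $\mathfrak{C}[\scr{J}^R](j, \ast_1) \cong (\scr{P}_{\scr{J}}^j)_{/\sim_R}$.

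For (2), I would iterate the construction: $\widetilde{\scr{J}}$ arises by further collapsing $\Nerv(J_0)$ in $\scr{J}^R$, and the symmetric argument introduces additional identifications $\underline{S} \sim \underline{U} \cup \underline{S}$ for $\underline{U}$ a chain in $J_0$ composable on the left with $\underline{S}$. The combined invariant is precisely the ambidextrous truncation $\underline{S}^A$, giving the second isomorphism. The principal obstacle throughout is justifying that the mapping spaces in the pushout of simplicial categories are computed by the levelwise coequalizer above: this relies on the fact that collapsing a \emph{full} simplicial subcategory introduces no new free composites, so the pushout in $\on{Cat}_{\Delta}$ reduces to the strict pushout in $\on{Cat}$ at each simplicial level.
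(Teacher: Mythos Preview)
Your approach is correct and genuinely different from the paper's. The paper simply defers to the necklace characterization of $\mathfrak{C}[X]$ due to Dugger--Spivak, leaving the reader to unwind how necklaces in the quotients $\scr{J}^R$ and $\widetilde{\scr{J}}$ correspond to right- and ambi-truncated chains. Your argument instead exploits that $\mathfrak{C}$ preserves pushouts and computes the resulting pushout of simplicial categories by hand; this is more self-contained and arguably more transparent once one has the explicit description of $\mathfrak{C}$ on poset nerves. The trade-off is that the Dugger--Spivak route applies uniformly to arbitrary simplicial sets, whereas your route leans on the very special shape of the pushout at hand.

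One point deserves sharpening. Your final sentence asserts that collapsing a \emph{full} simplicial subcategory introduces no new free composites, but fullness alone does not guarantee this: in a category with morphisms $a \to b$ and $c \to a'$, collapsing the full subcategory on $\{b,c\}$ creates a new morphism $a \to a'$. What makes the levelwise coequalizer description valid here is the additional fact---also a direct consequence of the ordered partition axiom---that there are no morphisms from $J_1$ back to $J_0$, so $\mathfrak{C}[\Nerv(J_1)]$ sits inside $\mathfrak{C}[\scr{J}]$ as a \emph{cosieve}, and dually $\mathfrak{C}[\Nerv(J_0)]$ as a sieve for part~(2). Once you cannot re-emerge from the collapsed subcategory, no zig-zags through $\ast$ arise and the na\"ive quotient on hom-sets really is the pushout. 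You have all the ingredients for this, and your earlier invocation of the partition axiom points in exactly the right direction; just make the cosieve/sieve property explicit when justifying the pushout computation.
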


\begin{proof}
	Follows from, e.g., the necklace characterization of \cite{DuggerSpivak} once the definitions have been unwound.
\end{proof}

\section{The enhanced twisted arrow category}\label{sec:TwDefnFib}

Our construction of the enhanced twisted arrow category will depend on an upgrade of the cosimplicial object
\[
\func{\Delta\to \Set_\Delta; [n]\mapsto \Delta^n\star (\Delta^n)^\op} 
\]
to a cosimplicial object in scaled simplicial sets. For a discussion of the intuition behind this choice of scaling, see the introduction. To simplify some of the discussion to come, we introduce some notational conventions surrounding $\Delta^n\star(\Delta^n)^\op$. Note, before we begin, that there is a canonical identification $\Delta^n\star(\Delta^n)^\op\cong \Delta^{2n+1}$, which we will often use without comment. 

\begin{notation}
	In general, we will denote elements of $\Delta^n\star(\Delta^n)^\op$ by $i\in \Delta^n$ or $\overline{i} \in (\Delta^n)^\op$. 
	Note that under the identification $\Delta^n\star(\Delta^n)^\op\cong \Delta^{2n+1}$, $\overline{i}$ is identified with $2n+1-i$. We denote the unique duality on $\Delta^n\star(\Delta^n)^\op$ by 
	\[
	\func{\tau_n: \Delta^n\star(\Delta^n)^\op\to (\Delta^n)^\op \star \Delta^n; 
		i\mapsto \overline{i}.} 
	\]
	When $n$ is clear from context, we will simply denote $\tau_n$ by $\tau$. 
\end{notation}

\begin{definition}\label{defn:CosimpQ}
	We define a cosimplicial object 
	\[
	\func{
		Q: \Delta^\op \to \scsSet; 
		[n] \mapsto \Delta^n\star(\Delta^n)^\op
	}
	\]
	by declaring a non-degenerate 2-simplex $\func{\sigma:\Delta^2\to\Delta^n\star(\Delta^n)^\op}$ to be thin if: 
	\begin{itemize}
		\item $\sigma$ factors through $\Delta^n\subset Q(n)$;
		\item $\sigma$ factors through $(\Delta^n)^\op\subset Q(n)$;
		\item $\sigma=\Delta^{\{i,j,\overline{k}\}}$, where $i<j\leq k$; or
		\item $\sigma=\Delta^{\{k,\overline{j},\overline{i}\}}$, where $i<j\leq k$.
	\end{itemize}
	Note that the scaling is symmetric under $\tau_n$ by definition; i.e. the maps $\tau_n$ define dualities on the scaled simplicial sets $Q([n])$. 
	
	The `nerve' operation associated to $Q$ is a functor 
	\[
	\func*{Z^\ast: \scsSet\to \Set_{\Delta}
	}
	\]  
	defined by setting $(Z^\ast X)_n:=\Hom_{\scsSet}(Z([n]),X)$. 
\end{definition}

\begin{remark}
	We will often abuse notation and denote $Q([n])$ by $Q(n)$. We will adopt a similar convention for other cosimplicial objects without comment. 
\end{remark}

\begin{definition} 
	Let $\CC$ be an $\infty$-bicategory with underlying $\infty$-category $\C$. The \emph{enhanced twisted arrow category} of $\CC$ is the marked simplicial set 
		\[
		\Tw(\CC):= (Q^\ast\CC, E)
		\] 
	where the edges of $E$ are precisely those corresponding to maps $\func{\Delta^3_\sharp\to \CC}$. Note that the inclusions $\Delta^n_\sharp\subset Q(n)$ and $(\Delta^n)^\op_\sharp\subset Q(n)$ induce a canonical map 
	\[
	\Tw(\CC)\to \C\times \C^\op
	\]
	of simplicial sets. 
\end{definition}

\begin{remark}
	It is immediate from the definitions that $\Tw(\C)$ is the $\infty$-categorical twisted arrow category of \cite{LurieDAGX}. With some work it can be shown that this is precisely the simplicial subset of $\Tw(\CC)$ spanned by the marked morphisms. 
\end{remark}

The immediate aim of this section is to prove the following theorem, which can be seen as an $(\infty,2)$-categorical analogue of \cite[Prop. 4.2.3]{LurieDAGX}. 

\begin{theorem}\label{thm:TwFibrant}
	For any $\infty$-bicategory $\CC$ with underlying $\infty$-category $\C$, the canonical map 
	\[
	\func{\Tw(\CC)\to \C\times \C^\op}
	\]
	 is a Cartesian fibration and the marked edges are Cartesian. 
\end{theorem}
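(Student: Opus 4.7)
The approach is to verify in order (i) that $p: \Tw(\CC) \to \C \times \C^\op$ is an inner fibration, (ii) that every marked edge is $p$-Cartesian, and (iii) that every edge in $\C \times \C^\op$ admits a marked lift with any prescribed target vertex. Via the identification $\Tw(\CC)_n = \Hom_{\scsSet}(Q(n),\CC)$ together with fibrancy of $\CC$, each of the lifting problems in (i) and (ii) translates to the question of whether a certain inclusion of scaled simplicial sets into $Q(n)$ is scaled anodyne, and I plan to dispatch all of these using the pivot trick (\autoref{lem:pivot}).

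For (i), fix $0 < k < n$. Unwinding the translation, the lifting problem against $\Lambda^n_k \hookrightarrow \Delta^n$ becomes the extension of a map out of the scaled simplicial subset $K_{n,k} \subset Q(n)$ obtained from the scaled image of $\Lambda^n_k$ in $Q(n)$ together with $\Delta^n_\sharp$ and $(\Delta^n)^\op_\sharp$. I will prove $K_{n,k} \hookrightarrow Q(n)$ is scaled anodyne by applying \autoref{lem:pivot} with pivot point $k$ and a dull subset of $\mathbf{P}(2n+1)$ whose basal sets index the missing simplices through $k$; the 2-simplex hypothesis of the pivot trick is then satisfied either because the required 2-face sits inside the sharply scaled $\Delta^n \subset Q(n)$, or because it is of the form $\{r,k,\overline{s}\}$ with $r < k \leq s$ and is therefore thin by \autoref{defn:CosimpQ}. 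A second, dual pass with pivot $\overline{k}$ — justified by \autoref{rmk:scaledanodyneduals} — completes the fill on the $(\Delta^n)^\op$-side.

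To verify (ii), the standard horn-filling characterisation of $p$-Cartesian edges turns the problem into an analogous scaled anodyne extension, now with the extra feature that the source subcomplex carries additional thin 2-simplices inherited from the hypothesis that the marked edge factors through $\Delta^3_\sharp$. These additional thin 2-simplices are precisely what \autoref{lem:pivot} demands around the pivot point $n$ (and, dually, $\overline{n}$), so the argument proceeds verbatim. For (iii), given $(\alpha,\beta): (a,b) \to (a',b')$ in $\C \times \C^\op$ and a vertex $f: a' \to b'$ in $\Tw(\CC)$ above the target, I form the composites $f\alpha$, $\beta f$, and $\beta f \alpha$ using fibrancy of $\CC$ and assemble the data into a 3-simplex $\Delta^3 \to \CC$ whose four 2-faces are all composition witnesses, and thus thin. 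The resulting map $\Delta^3_\sharp \to \CC$ is the desired marked Cartesian lift.

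I expect the main obstacle to be the explicit combinatorial identification of the dull subsets in (i) and (ii), together with the verification that their basal sets enumerate precisely the missing simplices of $Q(n)$. The scaling of \autoref{defn:CosimpQ} is asymmetric — the thin simplex $\{i,j,\overline{k}\}$ has no mirror $\{\overline{i},j,k\}$ — so the fill must be split into two pivot-trick passes keyed to $k$ and $\overline{k}$, and matching this dual pair of dull subsets against the scaling of $Q(n)$ is where the bulk of the combinatorial work lies.
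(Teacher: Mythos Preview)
Your high-level strategy---translate the lifting problems to scaled-anodyne extension problems inside $Q(n)\cong\Delta^{2n+1}$ and attack them with \autoref{lem:pivot}---is exactly the paper's. The gap is in the combinatorics of step (i), and correspondingly (ii): your subcomplex $K_{n,k}$ contains \emph{no} codimension-$1$ face of $\Delta^{2n+1}$. Its maximal faces are $\Delta^n$, $(\Delta^n)^\op$, and the codimension-$2$ faces missing a pair $\{j,\overline{j}\}$. But condition~(3) of \autoref{def:dull} forces any dull subset to contain two singletons, so $\scr{S}^{\mathcal{A}}$ always contains at least two codimension-$1$ faces; hence $K_{n,k}$ is never of the form $\scr{S}^{\mathcal{A}}$, and a single call to \autoref{lem:pivot} on $\Delta^{2n+1}$ cannot get off the ground. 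Your proposed ``second, dual pass with pivot $\overline{k}$'' does not repair this, since the pivot trick is a statement about a subcomplex of a \emph{single} simplex, and after one pass you are no longer sitting inside one.

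The paper's fix is to interpolate: first enlarge $K_{n,k}$ by adding the two codimension-$1$ faces $d_0$ and $d_{2n+1}$, obtaining $\scr{K}^n_k$. This \emph{is} of dull-subset form (singletons $\{0\},\{2n+1\}$ together with the pairs $\{j,\overline{j}\}$ for $j\neq k$), so one application of \autoref{lem:pivot} with pivot $k$ finishes (\autoref{lem:fibstep1}). The real work is pushed into showing that $K_{n,k}\to\scr{K}^n_k$ is itself scaled anodyne (\autoref{lem:fibstep2}): this requires a further filtration by the subsimplices $\Delta^{[r,2n+1]}$ for $1\leq r\leq n$, each step of which is a separate pivot-trick application inside that smaller simplex, with the $d_{2n+1}$-half handled by the duality of \autoref{rmk:scaledanodyneduals}. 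So the actual shape of the argument is ``many small pivot tricks, then one big one'', not ``two big ones with dual pivots''. Your step~(iii) is fine and amounts to the paper's spine-filling argument spelled out by hand.
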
 

The proof of \autoref{thm:TwFibrant}, while it involves some combinatorial yoga, begins with the usual, straightforward approach: for each $0< i\leq n$, we consider the lifting problems 
\begin{equation}\label{diag:TwFiblift}
\begin{tikzcd}
(\Lambda^n_i)^\flat \arrow[d]\arrow[r] & \Tw(\CC)\arrow[d]\\
(\Delta^n)^\flat\arrow[r]\arrow[ur,dashed] & \C\times \C^\op
\end{tikzcd}
\end{equation}
and pass to adjoint lifting problems. It is worth noting that, in the case $i=n$, we will in fact consider the edge $\Delta^{\{n-1,n\}}\subset \Lambda^n_n$ to be marked.

\begin{construction}
	The adjoint lifting problem to (\ref{diag:TwFiblift}) will be the extension problem
	\begin{equation}\label{diag:adjointTwFib}
	\begin{tikzcd}
	(K^n_i)_\dagger\arrow[d]\arrow[r] & \CC\\
	Q(n)\arrow[ur,dashed]
	\end{tikzcd}
	\end{equation}
	where $(K^n_i)_\dagger\subset Q(n)$ is the scaled simplicial subset consisting of those simplices $\sigma:\Delta^m\to Q(n)$ which fulfill one of the following three conditions.
	\begin{itemize}
		\item $\sigma$ factors through $\Delta^n\subset Q(n)$.
		\item $\sigma$ factors through $(\Delta^n)^\op\subset Q(n)$.
		\item There exists an integer $j\neq i$ such that neither $j$ nor $\overline{j}$ is a vertex of $\sigma$. 
	\end{itemize}
\end{construction}

\begin{construction}
	We denote by $Q(n)_{\diamond}$ the scaled simplicial set defined by adding to the scaling of $Q(n)$ the triangles of the form $\set{n-1,n,\overline{j}}, \set{n-1,\overline{n},\overline{j}}$ as well as their duals induced by $\tau$. It is immediate to observe that solutions to the lifting problem
	\[
		\begin{tikzcd}[ampersand replacement=\&]
			(K^n_n)_{\diamond} \arrow[r] \arrow[d] \& \CC  \\
			Q(n)_{\diamond} \arrow[ur,dotted]
		\end{tikzcd}
	\]
	correspond to solutions to (\ref{diag:TwFiblift}) with $i=n$, mapping the last edge in $\Lambda^n_n$ to a marked edge in $\tw(\CC)$.
\end{construction}

\begin{construction}
	Let $0<i \leq n$ and define $\scr{K}^{n}_i$ to be the simplicial set obtained by adding to $K^n_i$ the faces $d^{0}$ and $d^{2n+1}$. Denote by $(\scr{K}^{n}_i)_{\dagger}$, $(\scr{K}^n_n)_{\diamond}$ the resulting simplicial sets obtained via the induced scaling.
\end{construction}

Our proof will proceed by showing that both morphisms in each factorization 
\[
\func{
	(K^n_i)_\dagger \to (\scr{K}^n_i)_\dagger \to Q(n)
}
\]
and 
\[
\func{
	(K^n_n)_\diamond \to (\scr{K}^n_n)_\diamond \to Q(n)_\diamond 
}
\]
are scaled anodyne. 

\begin{lemma}\label{lem:fibstep1}\
	\begin{enumerate}
		\item For $0<i<n$ the morphism $\func{(\scr{K}^n_i)_\dagger \to Q(n)}$ is scaled anodyne. 
		\item The morphism $\func{(\scr{K}^n_n)_\diamond \to Q(n)_\diamond }$ is scaled anodyne.
	\end{enumerate}
\end{lemma}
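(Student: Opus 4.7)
The plan is to realize both statements as instances of the pivot trick (\autoref{lem:pivot}) applied to a single explicit dull subset of $\mathbf{P}(2n+1)$. First, identifying $Q(n) \cong \Delta^{2n+1}$ with vertices $0, 1, \dots, n, \overline{n}, \dots, \overline{0}$, one notes that $\Delta^n \subset d^{2n+1}$ and $(\Delta^n)^\op \subset d^0$, so the two ``factors through'' clauses in the definition of $K^n_i$ are absorbed by the adjoined faces $d^0$ and $d^{2n+1}$; the missing-pair face corresponding to $j = 0$ is likewise absorbed. This identifies $\scr{K}^n_i$ with $\scr{S}^{\mathcal{A}}$ for
\[
\mathcal{A} \;:=\; \{\{0\}, \{2n+1\}\} \cup \bigl\{\{j, \overline{j}\} : j \in \{1, \dots, n\} \setminus \{i\}\bigr\},
\]
where the scaling is inherited from $Q(n)$ (respectively $Q(n)_\diamond$).

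Next, I would verify that $\mathcal{A}$ is dull with pivot point $i$. The four conditions of \autoref{def:dull} are immediate once one observes that $\bigcup \mathcal{A} = [2n+1] \setminus \{i, \overline{i}\}$, that the member sets of $\mathcal{A}$ are pairwise disjoint, and that the singletons $\{0\}, \{2n+1\} \in \mathcal{A}$ bracket $i$. The basal sets admit the explicit description
\[
\on{Bas}(\mathcal{A}) = \bigl\{\{0, 2n+1\} \cup T \;:\; T \text{ is a transversal of } \bigl\{\{j, \overline{j}\} : j \neq i\bigr\}\bigr\}.
\]
The proof then reduces to verifying the scaling hypothesis of \autoref{lem:pivot}: for every $Z \in \on{Bas}(\mathcal{A})$ and every $r, s$ with $\ell_{i-1}^Z \leq r < i < s \leq \ell_i^Z$, the 2-simplex $\{r, i, s\}$ is thin. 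Since $r < i \leq n$, the vertex $r$ always lies in $\Delta^n$, so only the value of $s$ can cause trouble.

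For part (1), $0 < i < n$, each pair $\{j, \overline{j}\}$ with $j \in \{i+1, \dots, n\}$ lies entirely inside the interval $(i, \overline{i})$, and $Z$ must pick a representative from each such pair; hence $\ell_i^Z \leq 2n - i < \overline{i}$. Consequently either $s \leq n$---so $\{r, i, s\}$ is thin as a 2-simplex of $\Delta^n$---or $s = \overline{k}$ for some $k \in \{i+1, \dots, n\}$, in which case $\{r, i, \overline{k}\}$ has the form $\{i_1, i_2, \overline{i_3}\}$ with $i_1 < i_2 \leq i_3$ and is thin by \autoref{defn:CosimpQ}. The pivot trick then yields the desired scaled anodyne extension.

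The hard part is part (2), $i = n$: no pair of $\mathcal{A}$ sits inside $(n, \overline{n})$, so the bound on $\ell_n^Z$ above fails, and this is precisely why $Q(n)_\diamond$ must intervene. I would handle this by a dichotomy---exactly one of $n - 1$ and $\overline{n - 1}$ belongs to $Z$. If $n - 1 \in Z$, then $\ell_{n-1}^Z = n-1$ forces $r = n-1$, and every $\{n - 1, n, \overline{k}\}$ with $s = \overline{k}$ in the range is thin, either by the original scaling (when $k = n$) or by the added scaling $\{n-1, n, \overline{j}\}$ of $Q(n)_\diamond$. If $\overline{n - 1} \in Z$, then $\ell_n^Z \leq \overline{n-1} = n+2$, restricting $s$ to $\{n+1, n+2\}$: here $\{r, n, \overline{n}\}$ is thin already in $Q(n)$, while $\{r, n, \overline{n-1}\}$ is thin by the dual (under $\tau$) of the added scaling $\{n-1, \overline{n}, \overline{j}\}$ taken at $j = r$. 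The added scalings defining $Q(n)_\diamond$ are engineered so as to settle exactly these two branches, and the pivot trick then completes the proof.
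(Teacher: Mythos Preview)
Your proposal is correct and follows exactly the paper's approach: both identify $\scr{K}^n_i$ with $\scr{S}^{\mathcal{A}}$ for the dull subset $\mathcal{A}=\{\{0\},\{2n+1\}\}\cup\{\{j,\overline{j}\}:1\leq j\leq n,\ j\neq i\}$ with pivot point $i$, and then invoke \autoref{lem:pivot}. The paper's proof is terse and simply asserts that the lemma follows, whereas you carry out the verification of the scaling hypothesis in detail; your case analysis for part~(2)---splitting on whether $n-1$ or $\overline{n-1}$ lies in the basal set $Z$, and tracking which added triangles of $Q(n)_\diamond$ (or their $\tau$-duals) are needed---is exactly the check the paper leaves implicit.
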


\begin{proof}
	For $0<i\leq n$, we note that unwinding the definition shows that $\scr{K}^n_i=\scr{S}^{\mathcal{A}_i}$, where $\scr{A}_i\subset \mathbf{P}(2n+1)$ is the dull subset containing $\{0\}$, $\{2n+1\}$, and $\{j,\overline{j}\}$ for $0<j\leq n$ such that $j\neq i$. The lemma follows immediately from \autoref{lem:pivot}. 
\end{proof}

\begin{lemma}\label{lem:fibstep2}\
	\begin{enumerate}
		\item For $0<i<n$ the morphism $\func{(K^n_i)_{\dagger} \to (\scr{K}^n_i)_{\dagger}}$ is scaled anodyne.
		\item For $i=n$ the morphism $\func{(K^n_n)_{\diamond} \to (\scr{K}^n_i)_{\diamond}}$ is scaled anodyne.
	\end{enumerate}
\end{lemma}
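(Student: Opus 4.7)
The plan is to reduce, via the duality $\tau_n$ and \autoref{rmk:scaledanodyneduals}, to the addition of the single face $d^{2n+1}$, analyse $L_i:=K^n_i\cap d^{2n+1}$ in terms of dull subsets, and then apply the pivot trick (\autoref{lem:pivot}) along a carefully chosen filtration. The first step is to observe that the two added faces can be handled independently: any simplex in $d^0\cap d^{2n+1}$ omits both $0$ and $\bar 0$ and hence lies in $K^n_i$ via the clause ``$j=0$'' (which is valid since $i\geq 1$), so $K^n_i\cup d^0$ and $K^n_i\cup d^{2n+1}$ are pushouts of $K^n_i$ along disjoint subcomplexes. Because $\tau_n$ swaps the two faces while preserving the scaling of $Q(n)$ (and of $Q(n)_\diamond$) and the subcomplex $K^n_i$, \autoref{rmk:scaledanodyneduals} reduces the problem to showing $(L_i)_\dagger\to (d^{2n+1})_\dagger$ is scaled anodyne, with the inherited scaling.

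Unwinding the defining clauses gives $L_i=\scr{S}^{\mathcal{A}_1}\cup\Delta^{\{0,\dots,n\}}$, where $\mathcal{A}_1=\{\{0\}\}\cup\{\{j,\bar j\}\mid j\in\{1,\dots,n\}\setminus\{i\}\}$. The family $\mathcal{A}_1$ is pairwise disjoint, and its unique possible pivot point is $i$; however, $\mathcal{A}_1$ contains no singleton strictly greater than $i$, so \autoref{lem:pivot} does not immediately apply. I would resolve this by introducing a short filtration $L_i\subset M\subset d^{2n+1}$ in which $M$ is chosen so that the remaining inclusion takes the form $\scr{S}^{\mathcal{A}}_\dagger\to (d^{2n+1})_\dagger$ for a genuinely dull family $\mathcal{A}$ containing both $\{0\}$ on the left and $\{\bar i\}$ on the right of the pivot $i$; the first inclusion $L_i\to M$ is handled separately through a sequence of explicit scaled horn fillings pivoted at $i$ that adjoin exactly those simplices which would otherwise be erroneously captured by adding $\{\bar i\}$ to $\mathcal{A}_1$. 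In the range $0<i<n$, the pivot-trick hypothesis for the second inclusion is satisfied by the defining scaled triangles of $Q(n)$: the relevant 2-simplices $\{r,i,s\}$ with $r<i<s$ all take one of the forms $\{a,b,\bar c\}$ with $a<b\leq c$ or $\{c,\bar b,\bar a\}$ with $a<b\leq c$, both of which are thin in $Q(n)$ by \autoref{defn:CosimpQ}.

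The main obstacle is the boundary case $i=n$: the pivot now sits at the end of the $\Delta^n$-factor, and the triangles $\{n-1,n,\bar k\}$ with $k<n$ required by the pivot hypothesis fail to be thin in $Q(n)$, since the defining clause ``$a<b\leq c$'' forces $k\geq n$. This is precisely the obstruction that motivates the passage from $Q(n)$ to $Q(n)_\diamond$, which by construction scales exactly the extra triangles $\{n-1,n,\bar j\}$ and $\{n-1,\bar n,\bar j\}$ together with their $\tau$-duals; once these are thin, the same filtration argument applies unchanged and yields that $(K^n_n)_\diamond\to(\scr{K}^n_n)_\diamond$ is scaled anodyne. The principal technical difficulty --- and the reason for introducing the filtration rather than attempting a one-shot application of \autoref{lem:pivot} --- is the bookkeeping required to verify at each stage that the pivot-trick 2-simplex associated to each $\mathcal{A}$-basal set is either scaled in $Q(n)$ (respectively $Q(n)_\diamond$) or was scaled at an earlier step via a scaled anodyne pushout.
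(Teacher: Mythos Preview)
Your overall strategy matches the paper's: reduce to a single face via the duality $\tau_n$ (using the observation that $d^0\cap d^{2n+1}\subset K^n_i$), then filter and apply the pivot trick. However, your two-step filtration $L_i\subset M\subset d^{2n+1}$ defers the entire difficulty to the first step $L_i\to M$, which you describe only as ``a sequence of explicit scaled horn fillings pivoted at $i$'' without specifying them. Unwinding, the intersection $L_i\cap\Delta^{[0,2n]\setminus\{\bar i\}}$ has exactly the same shape as $L_i$ itself --- namely $\scr{S}^{\mathcal{B}}\cup\Delta^{\{0,\dots,n\}}$ for the restriction $\mathcal{B}$ of $\mathcal{A}_1$ to the smaller simplex --- so you have merely reduced to a smaller instance of the same problem. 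Iterating would take $n-1$ further steps, at which point you are reconstructing an $n$-step filtration anyway; as written, the proposal does not supply one.

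The paper chooses a different filtration from the outset that sidesteps this issue. Working (dually) in $d^0=\Delta^{[1,2n+1]}$, it adds the nested simplices $\sigma_r=\Delta^{[r,2n+1]}$ for $r=n,n-1,\dots,1$. At each step the intersection $Z_r=X_{r+1}\cap\sigma_r$ is already of the form $\scr{S}^{\mathcal{A}_r}$ for a genuinely dull subset: the previously added $\sigma_{r+1}=d^r$ contributes the left singleton $\{r\}$, the $K^n_i$-clauses with $j<r$ contribute singletons $\{\bar j\}$ on the right, and the remaining clauses contribute the pairs. Crucially, the awkward summand $\Delta^{\{0,\dots,n\}}$ that obstructs your direct approach becomes, after restriction to $\sigma_r$, the face $\Delta^{[r,n]}$, which is contained in the face missing $\bar 0$ and hence absorbed into $\scr{S}^{\mathcal{A}_r}$ --- so no residual non-dull piece survives. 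Your remarks on the $i=n$ case and the role of the $\diamond$-scaling are correct and agree with the paper.
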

\begin{proof}
	Let $0<i\leq n$ and note that since $d^{0}\cap d^{2n+1}\in K^n_i$ it will suffice to show that the top horizontal morphism,
	\[
	\begin{tikzcd}[ampersand replacement=\&]
	Y^{\epsilon}_i \arrow[r] \arrow[d] \arrow[dr,phantom,"\lrcorner",very near start] \& \Delta^{2n} \arrow[d,"d^{\epsilon}"] \\
	(K^{n}_i)_{\ast} \arrow[r] \& (\Delta^{2n+1})_{\ast}
	\end{tikzcd}
	\]
	where $\epsilon \in \set{0,2n+1}$ and $\ast\in \set{\dagger,\diamond}$, is scaled anodyne. 
	
	We will first deal with the case $\epsilon=0$. Let $1\leq r \leq n$ and define
	\[
	\func{\sigma_{r}:\Delta^{[r,2n+1]} \to \Delta^{2n+1}}
	\]
	to be the obvious inclusion. Let us remark that $\sigma_1=d^{0}$ and that $\sigma_r$ factors through $d^{0}$ for every possible $r$. We produce a filtration
	\[
	\func{Y^0_i=X_{n+1} \to X_{n} \to  \cdots \to X_{2} \to \Delta^{[1,2n+1]}=X_{1}}
	\]
	where $X_{r}$ is obtained by adding the simplex $\sigma_{r}$ to $X_{r}$.
	
	It will thus suffice for us to check that the upper horizontal morphism in the pullback diagram (i.e. the restriction of $\sigma_r$ to $X_{r+1}$) 
	\[
	\begin{tikzcd}
	Z_r\arrow[r]\arrow[d]\arrow[dr,phantom,"\lrcorner",very near start] & \Delta^{[r,2n+1]}\arrow[d]\\
	X_{r+1}\arrow[r] & \Delta^{[1,2n+1]}
	\end{tikzcd}
	\]
	is scaled anodyne. However, we can observe that $Z_r$ consists of a union in $\Delta^{[r,2n+1]}$
	\begin{itemize}
		\item The $(2n-r)$-dimensional face $d^{r}$.
		\item The $(2n-1)$-dimensional faces $d^{2n+1-j}$ where $0\leq j < r$ and $j \neq i$.
		\item The $(2n-r-1)$-dimensional faces given given by those simplices missing a pair of vertices $\set{j,2n+1-j}$ with $r\leq j \leq n$ and $j \neq i$.
	\end{itemize}
	That is, $Z_r=\scr{S}^{\mathcal{A}_r}$, where $\mathcal{A}_r\subsetneq \mathbf{P}(2n+1-r)$ is the dull subset containing 
	\begin{itemize}
		\item $\{0\}$.
		\item The singletons $\{j\}$ for $2n+1-2r< j\leq 2n+1-r$ with $j\neq 2n-r-i+1$.
		\item The sets $\{k,2n+1-2r-k\}$ for $0\leq k\leq n-r$ with $r+k\neq i$.
	\end{itemize}
	One can easily verify that the scaling satisfies the conditions of \autoref{lem:pivot}, and thus $\func{Z_r\to \Delta^{[r,2n+1]}}$ is scaled anodyne. Consequently, each step of the filtration  
	\[
	\func{Y^0_i=X_{n+1} \to X_{n} \to  \cdots \to X_{2} \to \Delta^{[1,2n+1]}=X_{1}}
	\]
	is scaled anodyne, completing the proof that $\func{Y^0_i\to \Delta^{[1,2n+1]}}$ is scaled anodyne. 
	
	We conclude the proof by noting that the case $\func{Y^{2n+1}_i\to \Delta^{[0,2n]}}$ is formally dual, so by \autoref{rmk:scaledanodyneduals}, the proof is complete. 
\end{proof}

\begin{proof}[Proof of \autoref{thm:TwFibrant}]
	Combining \autoref{lem:fibstep1} and \autoref{lem:fibstep2}, it is immediate that (1) $\Tw(\CC)\to \C\times\C^\op$ is an inner fibration, and (2), the marked edges are Cartesian. It remains only for us to show that there is a sufficient supply of marked edges. Unwinding the definitions, we find that this will be true so long as the lifting problems 
	\[
	\begin{tikzcd}
	\on{Sp^3}\arrow[d]\arrow[r] & \CC\\
	\Delta^3_\sharp\arrow[ur,dashed] 
	\end{tikzcd}
	\]
	admit solutions, where $\on{Sp}_3:=\Delta^{\{0,1\}}\coprod_{\Delta^{\{1\}}}\Delta^{\{1,2\}}\coprod_{\Delta^{\{2\}}}\Delta^{\{2,3\}}$ is the spine of the 3-simplex. However, the left-most map is clearly scaled anodyne so a solution to this problem is guaranteed by fibrancy. The result now follows.
\end{proof} 

\section{The functor classified by $\Tw(\CC)$}\label{sec:classification}
Having now established the Cartesian fibrancy of $\func{\mathbb{T}\!\!\on{w}(\CC)\to \C}$, we aim to determine the functor which it classifies. It will come as no surprise to those familiar with other twisted-arrow category constructions that the functor in question will be the \emph{enhanced mapping functor} of \cite{GHN}, i.e., the mapping category functor of $\CC$ restricted to $\C^\op\times\C$. The solution to this classification problem will be quite involved and technical, involving a number of intermediate $\infty$-categories. Where possible, we will attempt to elucidate the meaning and function of these constructions in the text.  

\subsection{The comparison map}
We now turn our attention to the first step in our proof: constructing the comparison map. This part of the proof will be quite straightforward and in total analogy with its $\infty$-categorical counterpart in \cite{LurieDAGX}. To construct the desired map, we fix, once an for all, the following data:
\begin{itemize}
	\item An $\infty$-bicategory $\CC$ together with its underlying $\infty$-category $\C$. 
	\item A fibrant $\Set_\Delta^+$ enriched category $\DD$, and its maximally marked subcategory (Kan-complex enriched) $\D$, with a commutative diagram 
	\[
	\begin{tikzcd}
	\mathfrak{C}^{\on{sc}}[\C]\arrow[r,"\simeq"]\arrow[d,hookrightarrow] & \D\arrow[d,hookrightarrow]\\
	\mathfrak{C}^{\on{sc}}[\CC]\arrow[r,"\simeq"'] & \DD 
	\end{tikzcd}
	\]
	such that the horizontal arrows are weak equivalences of $\Set_\Delta^+$-enriched categories. 
\end{itemize}
With this data fixed, the enhanced mapping functor is the composite 
\[
\func{F:\D^\op\times \D\to \DD^\op\times\DD\to[{\text{Map}}] \CC\!\on{at}_\infty}
\]
To retain concision, we use the pedestrian notation $F$ for the enhanced mapping functor, rather than the more suggestive $\on{Map}_{\DD}$. 

\begin{proposition}\label{prop:comparisonbeta}
	There is an map 
	\[
	\beta:\Tw(\CC)\to \on{Un}^+_{\C\times\C^\op}(F)
	\]
	of Cartesian fibrations over $\C\times\C^\op$. 
\end{proposition}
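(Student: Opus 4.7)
The natural approach is to construct $\beta$ via the marked scaled straightening/unstraightening adjunction. Under this adjunction, a map $\beta: \Tw(\CC) \to \on{Un}^+_{\C \times \C^{\op}}(F)$ of marked simplicial sets over $\C \times \C^{\op}$ corresponds to a natural transformation of $\Set_\Delta^+$-enriched functors
\[
\tilde\beta: \on{St}^+_{\C \times \C^{\op}}(\Tw(\CC)) \Rightarrow F
\]
on $\mathfrak{C}[\C \times \C^{\op}] \simeq \D \times \D^{\op}$. The plan is thus to produce such a $\tilde\beta$ explicitly, and then verify that the induced $\beta$ is a map of Cartesian fibrations.

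To build $\tilde\beta$, I would first apply the scaled rigidification functor levelwise to the cosimplicial object $Q$, obtaining a cosimplicial $\Set_\Delta^+$-enriched category
\[
\mathfrak{Q}: \Delta \to \Cat_{\msSet}, \qquad [n] \mapsto \mathfrak{C}^{\on{sc}}[Q(n)].
\]
An $n$-simplex $\xi: Q(n) \to \CC$ of $\Tw(\CC)$ induces, via functoriality of $\mathfrak{C}^{\on{sc}}$ composed with the chosen equivalence $\mathfrak{C}^{\on{sc}}[\CC] \simeq \DD$, a functor of $\Set_\Delta^+$-enriched categories $\mathfrak{Q}(n) \to \DD$. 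Evaluating on mapping spaces at each pair of vertices $i \in \Delta^n$ and $\bar{j} \in (\Delta^n)^{\op}$ yields maps of marked simplicial sets $\mathfrak{Q}(n)(i,\bar{j}) \to F(\xi(i),\xi(\bar{j}))$. Combined with the colimit-over-necklaces description of the straightening (via \autoref{lem:posetquotientmappingspaces} and the analogous formulas of \cite{LurieGoodwillie}), these assignments assemble into the components of $\tilde\beta$; naturality in the morphisms of $\mathfrak{C}[\C \times \C^{\op}]$ and in the cosimplicial index $[n]$ follows formally from the naturality of $\mathfrak{C}^{\on{sc}}$ and the cosimplicial structure on $Q$.

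Finally, compatibility of $\beta$ with the projections to $\C \times \C^{\op}$ is immediate from the inclusions $\Delta^n, (\Delta^n)^{\op} \hookrightarrow Q(n)$ used to define the projection $\Tw(\CC) \to \C \times \C^{\op}$. To see that $\beta$ is a map of Cartesian fibrations, one must verify that marked edges of $\Tw(\CC)$, which by definition arise from maps $\Delta^3_\sharp \to \CC$, are carried to Cartesian edges of $\on{Un}^+_{\C \times \C^{\op}}(F)$; this is a direct consequence of the standard description of Cartesian edges in a marked unstraightening together with the fact that $\Delta^3_\sharp$ scales all triangles, forcing the induced mapping-space morphism to be an equivalence. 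The main technical obstacle is the combinatorial bookkeeping of the enriched adjunction and the explicit identification of the mapping spaces of $\mathfrak{Q}(n)$; conceptually, however, the construction is parallel to the $(\infty,1)$-categorical analog of \cite[Prop. 4.2.3]{LurieDAGX}, and introduces no essentially new ideas beyond the scaled setting already developed in \autoref{sec:TwDefnFib}.
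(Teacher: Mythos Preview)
Your approach is essentially the same as the paper's: both exploit the straightening/unstraightening adjunction to reduce the construction of $\beta$ to producing something on the straightened side. The paper packages this via the auxiliary cone category $\EuScript{E}$ (following \cite[Prop.~4.2.3]{LurieDAGX}), so that a marking-preserving map $\beta$ over $\C\times\C^\op$ corresponds to a map $\gamma:\Tw(\CC)^{\triangleright}\to\Nerv(\EuScript{E})$ compatible with the base; this is exactly the data of your natural transformation $\tilde\beta$, just encoded differently.

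The one place where the paper is more explicit than your outline is the actual combinatorial content you label ``bookkeeping'': for an $n$-simplex $\sigma$ of $\Tw(\CC)$ with adjoint $\nu_\sigma:\mathfrak{C}[\Delta^{2n+1}]\to\DD$, the paper writes down concrete poset maps
\[
\zeta_i:\OO^{n+1}(i,n+1)\to\OO^{2n+1}(i,2n+1-i),\qquad S\cup\{n+1\}\mapsto S\cup\tau(S),
\]
and then defines $\gamma_\sigma$ on the mapping space to the cone point as the composite $\nu_\sigma\circ\zeta_i$. This is the step your proposal gestures at when you speak of ``evaluating on mapping spaces'' and assembling via necklaces, but you do not say which map from $\mathfrak{C}[\Delta^{n+1}](i,n+1)$ into $\mathfrak{Q}(n)(i,\overline{i})$ you intend; without pinning down $\zeta_i$ (or its poset-level reformulation $B:(S_0,v)\mapsto(S_0,\tau(S_0))$ from \autoref{rmk:zetasbecomeposetmap}), the construction is not complete. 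Once that map is specified, your verification that marked edges go to Cartesian edges is exactly the paper's observation that the induced $2$-simplex in $\Nsc(\EuScript{E})$ is thin when the source edge comes from $\Delta^3_\sharp\to\CC$.
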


\begin{proof}
	The proof proceeds along the same lines as the analogous argument in \cite{LurieDAGX}.  We define an ancillary simplicial category $\EuScript{E}$ with objects either the objects of $\D^\op\times\D$, or a "cone point" $v$. The mapping spaces will be those of $\D^\op\times\D$ if they don't involve $v$, and will be defined by 
	\begin{align*}
	\on{Map}_{\EuScript{E}}(v,(D,D^\prime))&:=\emptyset\\
	\on{Map}_{\EuScript{E}}((D,D^\prime),v) &:= \on{Map}_{\DD}(D,D^\prime)
	\end{align*}
	otherwise. 
	
	As in \cite{LurieDAGX}, a  map  over $\C\times\C^\op$ preserving markings --- $\func{\beta: \Tw(\CC)\to  \on{Un}_{\C\times\C^\op}(F)}$ --- will be equivalent to giving a map 
	\[
	\func{\gamma:\Tw(\CC)^{\triangleright}\to \Nerv(\EuScript{E})}
	\]
	such that the diagram 
	\[
	\begin{tikzcd}
	\operatorname{Tw}(\mathbb{C})^{\triangleright} \arrow[rrrd, dashed, shift left] &                                         &                                                                             &                     \\
	\operatorname{Tw}(\mathbb{C}) \arrow[r] \arrow[u]                               & \C \times \C^{\operatorname{op}} \arrow[r] & \operatorname{N}(\D)\times \operatorname{N}(\D)^{\operatorname{op}} \arrow[r] & \operatorname{N}(\scr{E})
	\end{tikzcd}
	\]
	commutes, and such that, for every $f:\Delta^1\to \Tw(\CC)$ which is marked, the two-simplex $f\ast \id_{\Delta^0}:\Delta^1\star\Delta^0\to \Tw(\CC)^{\triangleright}\to \Nerv(\EuScript{E})$ is sent to a scaled 2-simplex in $\Nsc(\EuScript{E})$.\footnote{Note that (1) this follows directly from unwinding the characterization of the marked 1-simplices of the straightening in  \cite[3.2.1.2]{HTT}, and (2) this is in nearly precise analogy with the definition of the scaling on the scaled cone in \cite[3.5.1.]{LurieGoodwillie}}  (Equivalently, if the adjoint map $\mathfrak{C}[\Delta^2]\to \EuScript{E}$ determines a marked edge in the mapping space from $0$ to $2$.)
	
	We now define the map in question: Given an $n$-simplex $\sigma:\Delta^n\to \Tw(\CC)$, we obtain by definition and adjunction a map 
	\[
	\func{\nu_\sigma:\mathfrak{C}[\Delta^{2n+1}]\to \mathfrak{C}[\CC]\to \DD.} 	
	\]
	We now define a map
	\[
	\func{\gamma_\sigma:\mathfrak{C}[\Delta^{n+1}]\to \EuScript{E}}
	\]
	On $\mathfrak{C}[\Delta^n]\subset \mathfrak{C}[\Delta^{n+1}]$, this is completely determined by the commutativity condition above. For mapping spaces involving the $(n+1)$\textsuperscript{st}-vertex, we define the maps
	\[
	\func*{
		\zeta_i:\OO^{n+1}(i,n+1) \to \OO^{2n+1}(i,2n+1-i);
		S\cup\{n+1\} \mapsto S\cup \tau(S) 
	}
	\]
	where $S$ is considered as a subset of $[n]$, and $\tau$ is the involution on vertices of $\Delta^{2n+1}$. We then define 
	\[
	\func{\gamma_\sigma: \OO^{n+1}(i,n+1)\to[\zeta] \OO^{2n+1}(i,2n+1-i)\to[\nu_\sigma] \on{Map}_{\DD}(\nu_{\sigma}(i), \nu_\sigma(2n+1-i))}
	\]
	Completing our definition of $\gamma_\sigma$, and thus of $\gamma$. It is obvious from our definitions that $\gamma$ respects the marking/scalings.
\end{proof}

\begin{remark}
	The definition of the maps $\zeta_i$ which allow us to define the map above are quite ad-hoc in appearance, as indeed are their analogues in \cite{LurieDAGX}. Once we pass to fibers, the map can be much more elegantly defined: in terms of a composite with a map of posets (see \autoref{rmk:zetasbecomeposetmap}).
\end{remark}

The goal of the remainder of this section will be the proof of the following. 

\begin{theorem}\label{thm:TwClassifiesMap}
	The map $\beta$ is an equivalence of Cartesian fibrations over $\C\times\C^\op$.
\end{theorem}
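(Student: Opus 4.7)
The plan is to reduce to a fiberwise check and then identify both fibers with the $\infty$-category $\on{Map}_{\DD}(c,c')$. Since $\beta$ is a map of Cartesian fibrations over the common base $\C\times\C^\op$, it suffices (by the standard recognition criterion for equivalences between Cartesian fibrations, c.f.\ \cite[3.3.1.5]{HTT}) to verify that for every object $(c,c')\in \C\times\C^\op$ the induced map on fibers
\[
\func{\beta_{(c,c')}:\Tw(\CC)_{(c,c')} \to \on{Map}_{\DD}(c,c')}
\]
is an equivalence of $\infty$-categories (where the marking on the left is inherited from $\Tw(\CC)$).

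To analyze the left-hand side, observe that an $n$-simplex of $\Tw(\CC)_{(c,c')}$ is a map $Q(n)\to\CC$ of scaled simplicial sets sending $\Delta^n\subset Q(n)$ constantly to $c$ and $(\Delta^n)^\op\subset Q(n)$ constantly to $c'$. Forming the scaled pushout
\[
\widetilde{Q}(n):= \Delta^0 \coprod\nolimits_{\Delta^n} Q(n) \coprod\nolimits_{(\Delta^n)^\op} \Delta^0,
\]
these simplices correspond bijectively to maps $\widetilde{Q}(n)\to\CC$ sending the two distinguished vertices to $c$ and $c'$ respectively. Passing to the scaled rigidification adjunction, and using the weak equivalence $\mathfrak{C}^{\on{sc}}[\CC]\simeq\DD$ together with the fact that $\widetilde{Q}(n)$ has only two objects, one identifies (up to weak equivalence) the $n$-simplex set of $\Tw(\CC)_{(c,c')}$ with the set of maps of marked simplicial sets $\mathfrak{C}^{\on{sc}}[\widetilde{Q}(n)](\ast_0,\ast_1)\to \on{Map}_{\DD}(c,c')$.

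The core of the argument, and the main obstacle, is therefore the combinatorial identification of $\mathfrak{C}^{\on{sc}}[\widetilde{Q}(n)](\ast_0,\ast_1)$ as a marked simplicial set. The ordered partition $(J_0^Q,J_1^Q)$ of Example \ref{ex:orderedpartofQ} and Lemma \ref{lem:posetquotientmappingspaces}(2) give an explicit description of the underlying simplicial set as $(\scr{P}_{Q(n)})_{/\sim_A}$. To handle the marking, I would first use the scaled anodyne maps of \autoref{lem:pivot}, applied to the scaling defining $Q(n)$, to cofibrantly replace $\widetilde{Q}(n)$ by a scaled simplicial set whose rigidification is transparent; the ``upper'' thin $2$-simplices of the form $\{i,j,\overline{k}\}$ and $\{k,\overline{j},\overline{i}\}$ encode precisely the $1$-simplex collapses needed so that each equivalence class under $\sim_A$ contains a canonical representative given by the ``zigzag'' $(i,\overline{i})$. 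A direct analysis of these classes, together with the fact that the markings on the right come from $3$-simplices $\Delta^3_\sharp$, should produce a natural weak equivalence of marked simplicial sets
\[
\func{\mathfrak{C}^{\on{sc}}[\widetilde{Q}(n)](\ast_0,\ast_1)\to (\Delta^n)^\flat.}
\]
Under this identification, the $n$-simplices of $\Tw(\CC)_{(c,c')}$ become exactly the $n$-simplices of $\on{Map}_{\DD}(c,c')$.

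Finally, one must check that this identification agrees with $\beta_{(c,c')}$. Unwinding the construction of $\beta$ in \autoref{prop:comparisonbeta}, the map $\zeta_i$ sending $S\cup\{n+1\}$ to $S\cup\tau(S)$ becomes, after passing to fibers, precisely the map of posets picking out the canonical $\sim_A$-representative for each simplex of $\scr{P}_{Q(n)}$ (this is the remark alluded to in \autoref{rmk:zetasbecomeposetmap}). Naturality of all constructions in $(c,c')$ is automatic, so once the fiberwise equivalence is established the theorem follows. The expected difficulty is the scaling-aware computation in the third paragraph; everything else is bookkeeping.
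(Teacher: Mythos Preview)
Your fiberwise reduction is correct and is also where the paper begins. From that point on, however, your route diverges from the paper's, and the sketch contains a real gap beyond the combinatorics you flag.

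The gap is the passage from ``$\mathfrak{C}^{\on{sc}}[\widetilde{Q}(n)](\ast_0,\ast_1)\simeq(\Delta^n)^\flat$ levelwise'' to ``the $n$-simplices of $\Tw(\CC)_{(c,c')}$ become exactly the $n$-simplices of $\on{Map}_{\DD}(c,c')$.'' A levelwise weak equivalence of cosimplicial marked simplicial sets does not make the associated nerves isomorphic, nor even weakly equivalent without a Reedy-cofibrancy argument on both sides and fibrancy of the target. You have also silently replaced $\mathfrak{C}^{\on{sc}}[\CC](c,c')$ (the actual target produced by adjunction) with $\on{Map}_{\DD}(c,c')$, and replaced $\on{Un}^{+}_{\ast}\!\left(\on{Map}_{\DD}(c,c')\right)$ (the actual fiber of the unstraightening) with $\on{Map}_{\DD}(c,c')$ itself; each of these substitutions is only a weak equivalence and must be threaded through the comparison with $\beta$. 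Finally, the proposed use of \autoref{lem:pivot} to ``cofibrantly replace $\widetilde{Q}(n)$ by a scaled simplicial set whose rigidification is transparent'' is not a move that lemma supports: the pivot trick yields scaled anodyne inclusions of subcomplexes into a simplex, not simplifications of a quotient like $\widetilde{Q}(n)$. So it is not true that ``everything else is bookkeeping.''

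The paper avoids the head-on computation of $\mathfrak{C}^{\on{sc}}[\widetilde{Q}(n)](\ast_0,\ast_1)$ entirely. Instead it imports from \cite{HarpazEquivModels} the equivalence $(\CC_{/y})_x\simeq \on{Un}^{\on{sc}}_{\ast}\!\left(\on{Map}_{\DD}(x,y)\right)$ for the outer Cartesian slice, builds an interpolating object $\scr{M}_{x,y}$ sitting in a span
\[
(\CC_{/y})_x \longleftarrow \scr{M}_{x,y} \longrightarrow \Tw(\CC)_{(x,y)}
\]
whose legs are trivial Kan fibrations (this is where the poset combinatorics of \autoref{lem:posetquotientmappingspaces} actually gets used, comparing $\widetilde{\bigstar}(n)$ with $\widetilde{\boxtimes}(n)$), produces a natural equivalence $\on{Un}^{\on{sc}}_{\ast}\simeq\on{Un}^{+}_{\ast}$ over the point, and then checks that the resulting pentagon commutes up to homotopy so that $\beta$ is an equivalence by $2$-out-of-$3$. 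The payoff of this indirection is that the hard mapping-space computation is anchored to a cosimplicial object ($\bigstar$) whose rigidification is already understood, rather than attacked from scratch for $\widetilde{Q}$.
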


\subsection{Interlude: a compendium of cosimplicial objects} 

In the sections which follow, there will be a variety of cosimplicial objects in play, each relating to a specific construction necessary for the proof. For ease of reference, we list these here, and describe additional structures (in particular ordered partitions) which will come into play in their study. 

\begin{definition}[The compendium]\label{defn:cosimplicialcompendium}
	We fix, for the rest of the section, the following cosimplicial objects, along with ordered partitions of their $n$\textsuperscript{th} levels. 
	\begin{enumerate}
		\item A cosimplicial object
		\[
		\func*{\bigstar:\Delta\to \scsSet; 
		[n]\mapsto \Delta^n\star \Delta^0}
		\]
		where the scaling on $\bigstar(n)=\Delta^n\star \Delta^0$ is given by declaring every 2-simplex in $\Delta^n\subset \bigstar(n)$ to be thin. 
		\begin{itemize}
			\item We define an ordered partition $(J_0^{\bigstar},J_1^{\bigstar})$ of $\bigstar(n)$ for each $n$ by setting $J_0^{\bigstar}=[n]$ and $J_1^{\bigstar}= \{n+1\}$, where we have identified $\bigstar(n)$ with $\Nerv([n+1])$. 
		\end{itemize}
		\item A cosimplicial object 
		\[
		\func*{
			\boxtimes: \Delta\to \scsSet;
			[n] \mapsto \Delta^n\star (\Delta^n)^\op\star \Delta^0	
		}
		\]
		We use our existing notational conventions for objects of $Q(n)\subset \boxtimes(n)$, and denote the final vertex by $v$. We equip $\boxtimes(n)$ with a scaling by (1) requiring the inclusions $Q(n)\subset \boxtimes(n)$ and $((\Delta^n)^\op\star\Delta^0)_\sharp \subset \boxtimes(n)$ to be maps of scaled simplicial sets; and (2) declaring any simplex of the form $\Delta^{\{j,\overline{i},v\}}$, where $0\leq i \leq j\leq n$, to be thin. 
		\begin{itemize}
			\item We define an ordered partition $(J_0^\boxtimes,J_1^\boxtimes)$ of $\boxtimes(n)$ for each $n$ by setting $J_0^\boxtimes=[n]$ and $J_1^\boxtimes=\{n+1,n+2,\ldots 2n+2\}$ under the identification of $\boxtimes(n)$ with $\Nerv([2n+2])$. 
		\end{itemize}
		\item A cosimplicial object 
		\[
		\func*{\Box:\Delta \to \scsSet; 
			[n] \mapsto \Delta^n\times \Delta^1
		}
		\]
		where the scaling consists of those triangles factoring through $\Delta^n \times \Delta^{\set{1}}$ and those specified in \cite[4.1.5]{LurieGoodwillie}.
		\begin{itemize}
			\item We define an ordered partition $(J_0^\Box,J_1^\Box)$ of $\Box(n)$ by setting $J_0^\Box:=[n]\times\{0\}$ and $J_1=[n]\times \{1\}$ under the identification of $\Box$ with $\Nerv([n]\times[1])$.
		\end{itemize}
	\end{enumerate}
\end{definition}

\subsection{Comparison with outer Cartesian slices}

Having established the existence of a comparison map $\func{\beta:\Tw(\CC)\to F}$ of Cartesian fibrations, we now must pause and circumnavigate our way to a proof that it is an equivalence. The winding route we take will make use of a Cartesian fibration $\CC_{/y}$ defined in \cite{HarpazEquivModels}. The utility  of $\CC_{/y}$ for us lies in the fact that, as established in \cite[\S 2.3]{HarpazEquivModels}, $\CC_{/y}$ classifies the contravariant Yoneda embedding $\scr{Y}_y$ on $\DD$.  In spite of the fact that $\func{\CC_{/y}\to \C}$ is a Cartesian fibration, we will refer to $\CC_{/y}$ as the \emph{outer Cartesian slice category}, in recognition of the fact that our $\func{\CC_{/y}\to \C}$ is a pullback along the inclusion $\func{\C\to \CC}$ of an outer Cartesian fibration as defined in \cite{HarpazEquivModels}. We begin by recalling the definition of $\CC_{/y}$. 

\begin{definition}
	For an object $y\in \CC$, we define the \emph{outer Cartesian slice category} $\CC_{/y}$ whose $n$-simplices are given by maps
	\[
	   \func{\sigma: \bigstar(n) \to \CC}, \text{ such that } \restr{\sigma}{n+1}=y.
	\]
	We equip $\CC_{/y}$ with a marking by declaring an edge to be marked precisely when it can be represented by a map $\func{\Delta^2_\sharp\to \CC}$. Note that the canonical inclusion $\Delta^n_\sharp \subset \bigstar(n)$ induces a map $\func{\CC_{/y}\to \C}$.   
\end{definition}

\begin{proposition}[{\cite[Cor. 2.18]{HarpazEquivModels}}]
	The functor $\func{\CC_{/y}\to \C}$
	is a Cartesian fibration, and an edge of $\CC_{/y}$ is Cartesian if and only if it is marked. 
\end{proposition}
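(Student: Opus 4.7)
The plan is to follow the same template as \autoref{thm:TwFibrant}, but specialized to the far simpler cosimplicial object $\bigstar$. For $0 < i \leq n$, I would consider a lifting problem
\[
\begin{tikzcd}
(\Lambda^n_i)^\flat \arrow[r] \arrow[d] & \CC_{/y} \arrow[d] \\
(\Delta^n)^\flat \arrow[r] \arrow[ur, dashed] & \C
\end{tikzcd}
\]
with the edge $\Delta^{\{n-1,n\}}$ additionally marked when $i = n$. By adjunction, this translates to extending a scaled simplicial map $L \to \CC$, where $L \subset \bigstar(n)$ is the union of all simplices factoring through $\Lambda^n_i \star \Delta^0$ or through $\Delta^n \subset \bigstar(n)$. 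Under the identification $\bigstar(n) \cong \Delta^{n+1}$, a direct unwinding shows that $L$ coincides with the horn $\Lambda^{n+1}_i$.

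For $0 < i < n$, the induced scaling on $L$ inherited from $\bigstar(n)$ renders the 2-simplex $\Delta^{\{i-1,i,i+1\}} \subset \Delta^n$ thin, so the inclusion $L \hookrightarrow \bigstar(n)$ is a scaled inner horn extension with thin defining triangle, hence scaled anodyne. In the case $i = n$, the defining 2-simplex $\Delta^{\{n-1,n,n+1\}}$ contains the cone vertex and is not a priori thin; however, the marking of the edge $\Delta^{\{n-1,n\}}$, when translated through the definition of the marking on $\CC_{/y}$, forces the given map $L \to \CC$ to send $\Delta^{\{n-1,n,n+1\}}$ to a thin 2-simplex in $\CC$. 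We may therefore enlarge the scalings on both $L$ and $\bigstar(n)$ to include this triangle and again obtain a scaled inner horn extension. This shows $\CC_{/y} \to \C$ is an inner fibration and that marked edges enjoy the Cartesian lifting property. The existence of enough marked Cartesian lifts follows, exactly as in the proof of \autoref{thm:TwFibrant}, from the fibrancy of $\CC$ applied to the scaled anodyne spine inclusion $\on{Sp}^2 \hookrightarrow \Delta^2_\sharp$.

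For the converse---every Cartesian edge is marked---I would invoke uniqueness of Cartesian lifts up to equivalence in the fiber. The fiber of $\CC_{/y} \to \C$ over $c$ is the mapping $\infty$-category $\on{Map}_{\CC}(c,y)$, and equivalences in this mapping category are automatically detected by marked 2-simplices in $\CC$. Hence a Cartesian edge differs from a marked Cartesian lift (produced above) by an equivalence in the fiber, and a 2-of-3 argument for the class of thin triangles in $\CC$ upgrades this to the desired marking.

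The main obstacle, as in \autoref{thm:TwFibrant}, is the careful scaling bookkeeping in the boundary case $i = n$: one must be precise about how the marking convention on $\CC_{/y}$ (via 2-simplices with last vertex $y$) interacts with the scaling convention on $\bigstar(n)$ in order to legitimately scale $\Delta^{\{n-1,n,n+1\}}$ and reduce to a scaled inner horn extension. The remainder of the argument is a direct adaptation of already-established techniques.
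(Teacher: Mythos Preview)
The paper does not prove this proposition at all: it is quoted verbatim as \cite[Cor.~2.18]{HarpazEquivModels} and used as a black box. Your proposal therefore supplies a direct argument where the paper offers none, and your approach---specializing the template of \autoref{thm:TwFibrant} to the simpler cosimplicial object $\bigstar$---is exactly the natural one. The identification of the adjoint lifting problem with the scaled inner horn $\Lambda^{n+1}_i \hookrightarrow \Delta^{n+1}$ is correct, as is the handling of the scaling in both the inner and the $i=n$ cases, and the existence of marked lifts via $\on{Sp}^2 \hookrightarrow \Delta^2_\sharp$.

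The one place that deserves a little more care is the converse direction. Your sketch reduces ``Cartesian $\Rightarrow$ marked'' to the claim that an equivalence in $\CC_{/y}$ is marked, i.e.\ that the representing $2$-simplex in $\CC$ is thin. This is true, but it is not automatic from the definitions: it uses the saturation properties of thin triangles in a \emph{fibrant} scaled simplicial set (essentially \cite[Rmk.~1.17]{HarpazEquivModels} and the closure of thin $2$-simplices under $2$-out-of-$3$ along a $3$-simplex). Concretely, given a Cartesian edge $e$ and a marked Cartesian lift $e'$, the comparison $2$-simplex in $\CC_{/y}$ yields a $3$-simplex in $\CC$ whose faces $\{0,1,2\}$ (in the base, hence thin) and $\{1,2,3\}$ (equal to $e'$, hence thin) are already thin; to conclude that $\{0,2,3\}=e$ is thin you still need $\{0,1,3\}$ thin, i.e.\ that the fiberwise equivalence is marked. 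You should make this step explicit rather than asserting it.
\medskip
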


Since our mode of proof is so circuitous, let us take a moment to sketch the path we will take. We begin by showing that there is a span 
\[
\begin{tikzcd}
(\CC_{/y})_x  & \scr{M}_{x,y}\arrow[r,"\sim"]\arrow[l,"\sim"'] & \Tw(\CC)_{(x,y)}
\end{tikzcd}
\]
displaying a weak equivalence of the fibers of $\CC_{/y}$ and $\Tw(\CC)$. 

We then show that there is a weak equivalence  
\[
\func{\on{Un}^+_{\ast}(\mathfrak{C}^{\on{sc}}[\DD](x,y))\to  \on{Un}^{\on{sc}}_{\ast}(\mathfrak{C}^{\on{sc}}[\DD](x,y))}.
\]
From \cite{HarpazEquivModels}, there is an equivalence 
\[
\func{f: (\CC_{/y})_x \to \on{Un}^{\on{sc}}_{\ast}(\mathfrak{C}^{\on{sc}}[\DD](x,y))}.
\] 
The final step to showing that $\beta$ is an equivalence is therefore establishing that the diagram 
\[
\begin{tikzcd}
(\CC_{/y})_x\arrow[d,"f","\simeq"']  & \scr{M}_{x,y}\arrow[r,"\sim"]\arrow[l,"\sim"'] & \Tw(\CC)_{(x,y)}\arrow[d, "\beta"]\\
\on{Un}^{\on{sc}}_{\ast}(\mathfrak{C}^{\on{sc}}[\DD](x,y)) & & \on{Un}^+_{\ast}(\mathfrak{C}^{\on{sc}}[\DD](x,y))\arrow[ll,"\simeq"]
\end{tikzcd}
\]
commutes up to equivalence. 

We begin this journey in the present section by defining the span
\[
\begin{tikzcd}
(\CC_{/y})_x  & \scr{M}_{x,y}\arrow[r,"\sim"]\arrow[l,"\sim"'] & \Tw(\CC)_{(x,y)}
\end{tikzcd}
\]
and showing that its legs are weak equivalences. 

\begin{notation}
	Let $x,y\in \CC$. We denote by $\Tw(\CC)_y$ the pullback 
	\[
	\begin{tikzcd}
	\Tw(\CC)_y\arrow[r]\arrow[d] & \Tw(\CC)\arrow[d]\\
	\C \arrow[r,"\id\times\{y\}"'] & \C\times \C^{\op} 
	\end{tikzcd}
	\] 
	and by $\Tw(\CC)_{(x,y)}$ the fiber over $(x,y)\in \C\times\C^\op$. 
\end{notation} 

\begin{definition}
	For $y\in \CC$, we define a simplicial set $\scr{M}_y$ whose $n$-simplices are given by maps
	\[
		\func{\sigma:\boxtimes\!(n) \to \CC} \text{ such that } \enspace \restr{\sigma}{\Nerv\left(J_1^{\boxtimes}\right)}=y.
	\]
	Note that these are, equivalently, maps $\boxtimes(n)^R \to \CC$. The inclusion $\Delta^n_\sharp=\Nerv \left(J_0^\boxtimes\right)_\sharp\subset \boxtimes(n)$ induces a map $\func{\scr{M}_y\to \C}$. 
\end{definition}

\begin{remark}
	We will view $\bigstar(n), \, \boxtimes(n)$, and $Q(n)$ as equipped with their ordered partitions from \autoref{defn:cosimplicialcompendium} and \autoref{ex:orderedpartofQ}, and consider their right quotients $\bigstar(n)^R$, $\boxtimes(n)^R$, and $Q(n)^R$, each of which piece together to form a cosimplicial object in $\scsSet$. The obvious natural inclusions 
	\[
	\begin{tikzcd}
	{\bigstar^R} \arrow[r,Rightarrow] & {\boxtimes^R} & Q^R \arrow[l,Rightarrow]
	\end{tikzcd}
	\]
	then induce maps 
	\[
	\begin{tikzcd}
		\CC_{/y}  & \scr{M}_{y}\arrow[r,"\pi"]\arrow[l,"\rho"'] & \Tw(\CC)_{y}
	\end{tikzcd}
	\]
	over $\C$. 
\end{remark}

\begin{proposition}\label{prop:PiTrivKanFib}
	The map $\func{\pi:\scr{M}_y\to \Tw(\CC)_y}$ is a trivial Kan fibration. 
\end{proposition}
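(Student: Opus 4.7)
The plan is to reduce the statement, via the cosimplicial adjunction defining $\scr{M}_y$ and $\Tw(\CC)_y$, to the claim that a certain inclusion of scaled simplicial sets is scaled anodyne, and then to exhibit this by an explicit filtration built using \autoref{lem:pivot}, with the vertex $v$ (and the scalings of $\boxtimes(n)$ surrounding it) playing the pivotal role.

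First, I would unwind the defining adjunctions. An $n$-simplex of $\scr{M}_y$ is a scaled map $\boxtimes(n)\to \CC$ whose restriction to $\Nerv(J_1^\boxtimes)$ is constant at $y$; an $n$-simplex of $\Tw(\CC)_y$ is a scaled map $Q(n)\to \CC$ whose restriction to $\Nerv(J_1^Q)=(\Delta^n)^\op$ is constant at $y$; and $\pi$ is restriction along $Q(n)\hookrightarrow \boxtimes(n)$. Accordingly, a lifting problem
\[
\begin{tikzcd}
\partial\Delta^n \arrow[d] \arrow[r] & \scr{M}_y \arrow[d,"\pi"] \\
\Delta^n \arrow[r] \arrow[ur,dashed] & \Tw(\CC)_y
\end{tikzcd}
\]
corresponds to an extension problem with target $\CC$ along the inclusion $\widetilde L_n\hookrightarrow \boxtimes(n)$, where
\[
\widetilde L_n := Q(n)\cup\partial\boxtimes(n)\cup \Nerv(J_1^\boxtimes)_\sharp,
\]
equipped with the scaling from $\boxtimes(n)$, and where the $\Nerv(J_1^\boxtimes)_\sharp$-summand is required to be constant at $y$. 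Because $\CC$ is fibrant in $\scsSet$, it is enough to show $\widetilde L_n\hookrightarrow \boxtimes(n)$ is scaled anodyne.

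The main technical step is an explicit filtration
\[
\widetilde L_n = X_0\subset X_1 \subset \cdots \subset X_m = \boxtimes(n)
\]
built by adjoining, in order of increasing dimension, those non-degenerate simplices of $\boxtimes(n)$ that are not already in $\widetilde L_n$. These are exactly the simplices that contain the terminal vertex $v$, do not lie in $(\Delta^n)^\op\star\Delta^0$, and are not images of cosimplicial face maps. At each stage I would apply \autoref{lem:pivot} with $v$ as the pivot point: the scaled triangles $\{j,\overline{i},v\}$ for $0\leq i\leq j\leq n$ together with the scalings from $((\Delta^n)^\op\star\Delta^0)_\sharp$ and from $Q(n)$ are exactly what is needed to verify the hypotheses of the pivot lemma at each step. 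The self-duality observation of \autoref{rmk:scaledanodyneduals} lets us cut the casework roughly in half.

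The principal obstacle is the combinatorial bookkeeping in the filtration: one must verify at every stage that the family of simplices being adjoined corresponds to a dull subset in the sense of \autoref{def:dull}, with $v$ as pivot, and that each consecutive-neighbor triangle required by the pivot lemma is indeed scaled in $\boxtimes(n)$. The scaling on $\boxtimes(n)$ was precisely engineered for this purpose, but matching up the ordering on the adjoined simplices with the scaled-triangle hypothesis, and handling the interaction between simplices with both $v$ and $\overline{k}$ coordinates, requires care closely parallel to (though more intricate than) the arguments of \autoref{lem:fibstep2}.
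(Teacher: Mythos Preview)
Your reduction to an adjoint extension problem is fine, but the proposed attack via \autoref{lem:pivot} with $v$ as the pivot point cannot work as stated. In $\boxtimes(n)\cong\Delta^{2n+2}$ the vertex $v$ is the \emph{terminal} vertex $2n+2$, whereas a dull subset in the sense of \autoref{def:dull} must have its pivot strictly between $0$ and the top vertex (condition~(\ref{dullcond:noi}) requires $0<i<n$, and condition~(3) requires singletons on both sides of $i$). The pivot lemma only ever fills \emph{inner} horns $\Lambda^X_i$, and every simplex you need to adjoin has $v$ as its last vertex; there is no way to realize the inclusion of its boundary-in-$\widetilde L_n$ as an inner horn at $v$. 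Already for $n=1$ one checks that $\widetilde L_1\subset\Delta^4$ is not of the form $\scr{S}^{\mathcal{A}}$ for any dull subset, and no reordering with pivot $v$ will produce the needed inner horns.

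The paper takes a completely different route and never attempts to show the inclusion is scaled anodyne. Instead it passes to the right quotients $Q(n)^R\hookrightarrow\boxtimes(n)^R$ (legitimate since all maps in question are constant on $J_1^\boxtimes$) and shows this is a \emph{trivial cofibration} by writing down an explicit retraction $r_n\colon\boxtimes(n)^R\to Q(n)^R$ collapsing $v$ onto $\overline{0}$, together with a homotopy $i_n\circ r_n\Rightarrow\id$ with degenerate components. The boundary term is then handled by a homotopy-colimit argument: since each $Q(I)^R\to\boxtimes(I)^R$ is an equivalence, so is $\partial(Q^R)^n\to\partial(\boxtimes^R)^n$, and $2$-out-of-$3$ finishes. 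If you want to salvage a filtration argument you would need to find genuine inner pivots inside $\boxtimes(n)$ (for instance among the $\overline{k}$), but the retraction argument is both shorter and avoids this combinatorics entirely.
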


\begin{proof}
	We first aim to show that the inclusions $\func{i_n:Q(n)^R\to \boxtimes(n)^R}$ are scaled trivial cofibrations. To this end, we define a map 
	\[
	\func*{r_n: \boxtimes(n)\to Q(n); 
	i\mapsto \begin{cases}
	i & i<2n+2\\
	i-1 & i=2n+2 
	\end{cases}
	}
	\]
	We see immediately that $r_n$ descends to a map $\func{r_n:\boxtimes(n)^R\to Q(n)^R}$, and that $r_n\circ i_n=\id$. Moreover, one can check that the natural transformation $i_n\circ r_n\Rightarrow \id$ descends to a transformation
	\[
	\func{\Delta^1_\flat \times \boxtimes(n)^R\to \boxtimes(n)^R}
	\]
	whose components are degenerate. Consequently, we see that $i_n$ is an equivalence of scaled simplicial sets. We then consider the boundary lifting problem and its associated adjoint problem
	\[
	\begin{tikzcd}
	\partial\Delta^n\arrow[r]\arrow[d] & \scr{M}_y\arrow[d,"\pi"]\\
	\Delta^n\arrow[r]\arrow[ur,dashed] & \Tw(\CC)_y
	\end{tikzcd}
	\enspace \enspace \rightsquigarrow
	\begin{tikzcd}
	K^n\arrow[d]\arrow[r] & \CC \\ 
	\boxtimes(n)^R\arrow[ur,dashed] 
	\end{tikzcd}
	\enspace \enspace  K^n=\partial( \boxtimes^R)^{n}\coprod\limits_{\partial (Q^R)^n}Q(n)^R
	\]
	Examining \autoref{defn:orderedpartitions}, we note that we can extend our conventions to $Q(\emptyset)^R=\Delta^0$ and $\boxtimes(\emptyset)^R=\Delta^0$. Consequently, we can write 
	\[
	\partial(\boxtimes^R)^n=\hocolim\limits_{I\subsetneq [n]} \boxtimes(I)^R \quad \text{and} \quad 	\partial (Q^R)^n=\hocolim\limits_{I\subsetneq [n]} Q(I)^R. 
	\]  
	Since the two diagrams are naturally equivalent, this yields an equivalence $\func{ \partial (Q^R)^n \to[\simeq] \partial(\boxtimes^R)^n}$. Since this map is a trivial cofibration it follows that $Q(n)^R \to K^n$ is an equivalence. Finally we consider the factorization
	\[
		\func{Q(n)^R \to K^n \to \boxtimes(n)^R }
	\]
	and we conclude by 2-out-of-3 that the map $K^n \to  \boxtimes(n)^R$ is a trivial cofibration. This finishes the proof.
\end{proof}

\begin{corollary}
	The map $\scr{M}_y\to \C$ is a Cartesian fibration, and $\func{\pi:\scr{M}_y\to \Tw(\CC)_y}$ is an equivalence of Cartesian fibrations over $\C$. 
\end{corollary}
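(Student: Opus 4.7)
The plan is to deduce both claims from \autoref{prop:PiTrivKanFib} via the Cartesian model structure on marked simplicial sets over $\C$. The key idea is to equip $\scr{M}_y$ with the marking in which an edge is marked precisely when its image under $\pi$ is Cartesian in $\Tw(\CC)_y$.

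First, pulling back the Cartesian fibration $\Tw(\CC) \to \C \times \C^{\op}$ of \autoref{thm:TwFibrant} along $\id \times \{y\}$ yields a Cartesian fibration $\Tw(\CC)_y \to \C$ whose Cartesian edges (inherited from $\Tw(\CC)$) make it fibrant in the Cartesian model structure on $\msSet_{/\C}$. With the marking on $\scr{M}_y$ chosen as above, $\pi$ becomes a map of marked simplicial sets whose marking is pulled back, in the sense that an edge of $\scr{M}_y$ is marked if and only if its image under $\pi$ is. Combined with the trivial Kan fibration property on underlying simplicial sets provided by \autoref{prop:PiTrivKanFib}, this makes $\pi$ a trivial fibration in the Cartesian model structure over $\C$: unwinding the generating cofibrations of $\msSet$, the right lifting property against them reduces exactly to these two conditions.

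Since trivial fibrations are fibrations in any model category, and $\Tw(\CC)_y \to \C$ is fibrant, the composite $\scr{M}_y \to \C$ is also fibrant; that is, it is a Cartesian fibration whose Cartesian edges are precisely the marked ones — namely, those mapping to Cartesian edges in $\Tw(\CC)_y$. Finally, $\pi$ being a trivial fibration between fibrant objects in the Cartesian model structure over $\C$ is precisely the statement that $\pi$ is an equivalence of Cartesian fibrations over $\C$. The only point requiring care is the characterization of trivial fibrations in the Cartesian model structure as maps which are trivial Kan fibrations on underlying simplicial sets whose markings are pulled back; once that is in hand, both statements of the corollary follow at once.
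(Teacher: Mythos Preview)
Your argument is correct and is precisely the standard model-categorical reasoning the paper is implicitly invoking; the corollary is stated without proof because it is immediate from \autoref{prop:PiTrivKanFib}, and you have simply spelled out why. In particular, your characterization of trivial fibrations in the Cartesian model structure over $\C$ (trivial Kan fibration on underlying simplicial sets together with the marking being pulled back) is exactly right, and the rest follows formally.
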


\begin{lemma}
	The Cartesian edges of $\scr{M}_y$ over $\C$ are precisely those which can be represented by scaled maps $\boxtimes^1_\dagger\to \CC$, where $\dagger$ is the extension of the scaling on $\boxtimes^1$ to include 
	\begin{enumerate}
		\item all 2-simplices in $\Delta^1\star(\Delta^1)^\op$, and 
		\item the 2-simplex $\Delta^{\{01,v\}}$. 
	\end{enumerate}
\end{lemma}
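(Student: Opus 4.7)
The approach is to transport the characterization of Cartesian edges across the equivalence of Cartesian fibrations $\pi:\scr{M}_y\to\Tw(\CC)_y$ established in the corollary to \autoref{prop:PiTrivKanFib}, using \autoref{thm:TwFibrant} as the base case. Since $\pi$ is an equivalence of Cartesian fibrations over $\C$, an edge $e$ of $\scr{M}_y$ represented by $\sigma:\boxtimes(1)\to\CC$ is Cartesian if and only if $\pi(e)$, represented by the restriction $\sigma|_{Q(1)}$, is Cartesian in $\Tw(\CC)_y$. By \autoref{thm:TwFibrant}, this holds iff $\sigma|_{Q(1)}$ represents a marked edge, i.e.\ sends every 2-simplex of $Q(1)=\Delta^1\star(\Delta^1)^{\op}$ to a thin 2-simplex of $\CC$. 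This is precisely condition (1) of the lemma, so the reduction to condition (1) is essentially immediate.

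It then remains to show that condition (2) is automatically satisfied by any $\sigma$ representing an edge of $\scr{M}_y$, not only the Cartesian ones. Here I would observe that since $\sigma$ sends $\Nerv(J_1^{\boxtimes})$ constantly to $y$, it factors through the right quotient $\boxtimes(1)^R=\boxtimes(1)\coprod_{\Nerv(J_1^{\boxtimes})}\Delta^0$. In this quotient the vertices $\overline{1},\overline{0},v$ are all collapsed to $\ast_1$, so the three 2-simplices $\Delta^{\{0,1,\overline{1}\}}$, $\Delta^{\{0,1,\overline{0}\}}$, and $\Delta^{\{0,1,v\}}$ of $\boxtimes(1)$ all descend to the single 2-simplex $\Delta^{\{0,1,\ast_1\}}$ of $\boxtimes(1)^R$. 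In particular $\sigma(\Delta^{\{0,1,v\}})=\sigma(\Delta^{\{0,1,\overline{1}\}})$ in $\CC$. Since $\Delta^{\{0,1,\overline{1}\}}$ is thin in the original scaling of $\boxtimes(1)$ (it fits the $Q(1)$ pattern $\{i,j,\overline{k}\}$ with $0<1\leq 1$) and $\sigma$ is a map of scaled simplicial sets, the common image is thin in $\CC$, establishing condition (2). Combining both directions then gives the stated characterization.

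The main obstacle, in my view, is recognizing that condition (2) — which superficially looks like an independent thinness requirement — is in fact forced by the quotient structure of $\boxtimes(1)^R$ together with the thinness of $\Delta^{\{0,1,\overline{1}\}}$ in the base scaling. An alternative, more combinatorial route would be to consider the 3-simplex $\Delta^{\{0,1,\overline{0},v\}}$, three of whose four faces, namely $\Delta^{\{0,1,\overline{0}\}}$, $\Delta^{\{0,\overline{0},v\}}$, and $\Delta^{\{1,\overline{0},v\}}$, are thin under the $\dagger$-extended scaling, and attempt to extract thinness of the remaining face $\Delta^{\{0,1,v\}}$ via a scaled anodyne filling in the fibrant object $\CC$; but the quotient observation sidesteps this and yields the cleaner proof.
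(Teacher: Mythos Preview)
Your proposal is correct; the paper leaves this lemma as an exercise to the reader, so there is no proof to compare against. Your argument via the trivial Kan fibration $\pi$ together with the quotient observation for condition (2) is a clean and complete solution.
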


\begin{proof}
	Left as an exercise to the reader. 
\end{proof}

\begin{corollary}
	The map $\func{\rho:\scr{M}_y\to \CC_{/y}}$ is a map of naturally-marked Cartesian fibrations over $\C$. 
\end{corollary}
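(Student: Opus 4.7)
The plan is to reduce the claim to two independent verifications: first, that $\rho$ is a morphism of simplicial sets over $\C$, and second, that $\rho$ sends Cartesian (i.e., marked) edges of $\scr{M}_y$ to Cartesian edges of $\CC_{/y}$. Together with the already-established Cartesian fibrancy of $\scr{M}_y \to \C$ and the cited Cartesian fibrancy of $\CC_{/y}\to \C$ from \cite{HarpazEquivModels}, these two facts will give the desired conclusion.

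For the first check, I would unwind the construction: the map $\rho$ is defined by precomposition with the natural transformation $\bigstar^R \Rightarrow \boxtimes^R$ of cosimplicial scaled simplicial sets, while the two projections to $\C$ are given by further precomposition with the inclusions $\Delta^n_\sharp = \Nerv(J_0^{\bigstar})_\sharp \hookrightarrow \bigstar(n)^R$ and $\Delta^n_\sharp = \Nerv(J_0^{\boxtimes})_\sharp \hookrightarrow \boxtimes(n)^R$. Since the natural transformation $\bigstar^R \Rightarrow \boxtimes^R$ is by construction the identity on the $\Nerv(J_0) = \Delta^n$ factor, the relevant triangle commutes automatically, and $\rho$ is a map over $\C$.

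The substantive step is the second. Here I would invoke the immediately preceding lemma: a Cartesian edge of $\scr{M}_y$ is represented by a scaled map $\boxtimes(1)_\dagger \to \CC$, where $\dagger$ was specifically constructed to include all 2-simplices of $\Delta^1 \star (\Delta^1)^\op$ together with $\Delta^{\{0,1,v\}}$ in its scaling. The inclusion $\bigstar(1) \hookrightarrow \boxtimes(1)$ identifies the unique non-degenerate 2-simplex of $\bigstar(1)$ with precisely this simplex $\Delta^{\{0,1,v\}}$; hence restricting a Cartesian edge of $\scr{M}_y$ along $\bigstar(1) \hookrightarrow \boxtimes(1)$ produces a 2-simplex $\Delta^2 \to \CC$ whose image is thin, i.e., a map $\Delta^2_\sharp \to \CC$. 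This is exactly the characterization, recalled above, of Cartesian edges in $\CC_{/y}$, so $\rho$ preserves markings.

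I do not expect any real obstacle: the extra thin 2-simplex in the scaling $\boxtimes(1)_\dagger$ of the previous lemma was engineered precisely so that this corollary would follow by a direct unwinding of definitions. The entirety of the technical work sits in the characterizations of Cartesian edges on each side that were already obtained.
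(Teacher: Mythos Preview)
Your proposal is correct and matches the paper's intended approach: the paper states this Corollary without proof, treating it as an immediate consequence of the preceding Lemma characterizing the Cartesian edges of $\scr{M}_y$, and your unwinding—identifying the unique non-degenerate $2$-simplex of $\bigstar(1)$ with $\Delta^{\{0,1,v\}}\subset \boxtimes(1)$ and observing it lies in the scaling $\dagger$—is exactly the intended argument.
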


\begin{proposition}
	For any $x\in \C$, denote the fiber of $\scr{M}_y$ over $x$ by $\scr{M}_{x,y}$. Then the induced map 
	\[
	\func{\rho:\scr{M}_{x,y}\to (\CC_{/y})_x}
	\]
	is a trivial Kan fibration. 
\end{proposition}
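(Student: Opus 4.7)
The plan is to mirror the argument for \autoref{prop:PiTrivKanFib}, with $\bigstar$ taking the role formerly played by $Q$ and with the ambidextrous quotients of \autoref{defn:orderedpartitions} replacing the right quotients. Writing $\widetilde{\bigstar(n)}$ and $\widetilde{\boxtimes(n)}$ for the ambidextrous quotients of the cosimplicial objects of \autoref{defn:cosimplicialcompendium}, the lifting problem
\[
\begin{tikzcd}
\partial\Delta^n \arrow[r]\arrow[d] & \scr{M}_{x,y} \arrow[d,"\rho"]\\
\Delta^n \arrow[r]\arrow[ur,dashed] & (\CC_{/y})_x
\end{tikzcd}
\]
translates, by adjunction and the description of $n$-simplices of the two fibers, into an extension problem
\[
\begin{tikzcd}
L^n \arrow[r]\arrow[d] & \CC\\
\widetilde{\boxtimes(n)} \arrow[ur,dashed]
\end{tikzcd}
\]
with $L^n := \partial\widetilde{\boxtimes(n)} \coprod_{\partial\widetilde{\bigstar(n)}} \widetilde{\bigstar(n)}$ and with the collapsed points $\ast_0,\ast_1$ constrained to map to $x$ and $y$. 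By fibrancy of $\CC$ it suffices to show that $L^n \to \widetilde{\boxtimes(n)}$ is a scaled trivial cofibration.

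The heart of the argument is to establish that the canonical inclusion $\jmath_n\colon\widetilde{\bigstar(n)} \hookrightarrow \widetilde{\boxtimes(n)}$ is itself a scaled trivial cofibration. Following the retraction-and-natural-transformation template of \autoref{prop:PiTrivKanFib}, I would construct a monotone poset map $s_n\colon \boxtimes(n) \to \bigstar(n)$ by $s_n(i)=i$ for $i\in [n]$ and $s_n(\bar{k})=s_n(v)=n+1$. This sends $J_0^\boxtimes$ into $J_0^\bigstar$ and $J_1^\boxtimes$ into $J_1^\bigstar$, so descends to the ambidextrous quotients, and satisfies $s_n\circ \jmath_n = \id$. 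The poset inequality $a \leq (\jmath_n\circ s_n)(a)$ in $\boxtimes(n)$ then provides a natural transformation $\alpha\colon \id \Rightarrow \jmath_n\circ s_n$, which descends to a map $\Delta^1 \times \widetilde{\boxtimes(n)} \to \widetilde{\boxtimes(n)}$ whose two components at $\ast_0$ and $\ast_1$ are both degenerate. In the manner of \autoref{prop:PiTrivKanFib} this exhibits $\jmath_n$ as an equivalence of scaled simplicial sets.

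With this equivalence in hand, induction on $n$ yields that $\partial\widetilde{\bigstar(n)} \to \partial\widetilde{\boxtimes(n)}$ is a scaled trivial cofibration, and so is its pushout $\widetilde{\bigstar(n)} \to L^n$. A final application of two-out-of-three delivers that $L^n \to \widetilde{\boxtimes(n)}$ is a scaled trivial cofibration, and the extension then exists by fibrancy of $\CC$.

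The main obstacle is a subtlety of scaling. Unlike the retraction $r_n$ appearing in \autoref{prop:PiTrivKanFib}, which preserves scaling on the nose, our $s_n$ is forced to send all of $J_1^\boxtimes$ uniformly to the vertex $n+1$ in order to descend to the ambidextrous quotient; as a consequence it sends the thin 2-simplex $\{i,j,\bar{k}\}$ (for $i<j\leq k$) to $\{i,j,n+1\}$, which is not declared thin in $\bigstar(n)$. Any careful execution of this plan must reconcile this mismatch. I expect the resolution to come from the observation that in $\widetilde{\boxtimes(n)}$ the offending image simplices acquire a degenerate face after collapsing $J_0$, and are already equivalent, via the thin 2-simplices $\{j,\bar{i},v\}$ that survive in the quotient, to data already visible in the image of $\widetilde{\bigstar(n)}$, so that the homotopy $\alpha$ may be arranged to respect the scaling modulo such equivalences. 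Failing that, a direct scaled-anodyne decomposition of $L^n \to \widetilde{\boxtimes(n)}$ patterned on \autoref{lem:pivot} should circumvent the reliance on the potentially scaling-incompatible retraction altogether.
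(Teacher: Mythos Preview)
Your reduction to showing that the inclusion $\jmath_n\colon\widetilde{\bigstar(n)}\hookrightarrow\widetilde{\boxtimes(n)}$ is a scaled trivial cofibration, together with the homotopy-colimit argument handling $L^n$, is correct and matches the paper. The genuine gap is precisely the one you flag, and neither of your suggested patches closes it. Your retraction $s_n$ collapses all of $J_1^{\boxtimes}$ to the cone point, so the thin $2$-simplex $\{i,j,\overline{k}\}$ (with $i<j\leq k$) is sent to $\{i,j,n{+}1\}$, which is \emph{not} thin in $\bigstar(n)$; and this does not evaporate in the ambidextrous quotient: the image $2$-simplex in $\widetilde{\bigstar(n)}$ has degenerate $d_2$ but distinct edges $d_0\neq d_1$ (the edges $\{i,n{+}1\}$ and $\{j,n{+}1\}$ remain distinct after collapsing $J_0^\bigstar$), hence it is a non-degenerate, non-thin $2$-simplex. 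Thus neither $s_n$ nor $\jmath_n\circ s_n$ is a map of scaled simplicial sets, and the homotopy $\alpha$ cannot be used to exhibit $\jmath_n$ as an equivalence in $\scsSet$. The phrase ``respect the scaling modulo such equivalences'' has no formal content here.

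The paper sidesteps this obstruction by a change of venue: since $\jmath_n$ is already a bijection on objects (both quotients have exactly $\ast_0,\ast_1$), it suffices to show that $\jmath_n$ induces an equivalence on the single non-trivial mapping space in the rigidification, i.e.\ on \emph{marked} simplicial sets rather than scaled ones. Using \autoref{lem:posetquotientmappingspaces}, these mapping spaces are the quotients $(\scr{P}_{\bigstar(n)})_{/\sim_A}$ and $(\scr{P}_{\boxtimes(n)})_{/\sim_A}$, and the comparison is then carried out by an explicit factorization $\scr{P}_{\bigstar}\to\scr{P}_G\to\scr{P}_{\boxtimes}$ through an intermediate sub-poset, with poset-level retractions $r_\alpha,r_\beta$ and natural transformations witnessing each leg as a marked equivalence. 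This is where the real work lies; the scaled-level retraction you wrote down is simply unavailable, and passing to mapping spaces is not a cosmetic reformulation but the mechanism that makes the argument go through.
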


\begin{proof}
	We follow effectively the same method as in the proof of \autoref{prop:PiTrivKanFib}, now using the two-sided quotients $\widetilde{\bigstar}(n)$ and $\widetilde{\boxtimes}(n)$ of the defining cosimplicial objects. By the same homotopy colimit argument, it will suffice for us to show that 
	\[
	\func{i_n:\widetilde{\bigstar}(n)\to \widetilde{\boxtimes}(n)}
	\] 
	is an equivalence. However, $i_n$ is already a bijection on objects, so it will suffice for us to show that $i_n$ induces an equivalence on the single non-trivial mapping space. To this end, we make use of the characterization of \autoref{lem:posetquotientmappingspaces}. It will thus suffice to to show that the maps of marked simplicial sets
	\[
	\func{\widetilde{s}:
	\left(\scr{P}_{\bigstar(n)}\right)_{/\sim_A}\to \left(\scr{P}_{\boxtimes(n)}\right)_{/\sim_A}
	}
	\]
	are equivalences for any $n$. For the rest of the proof we will abuse notation and denote the nerves of these posets by $\scr{P}_{\bigstar}$ and $\scr{P}_{\boxtimes}$.
	
	We will work with the unquotiented simplicial sets, and define maps which descend to quotients. Before we can do this, however, we must fix some notation. We denote object $S\in \scr{P}_{\boxtimes}$ by triples $(S_0,S_1,S_2)$ of subsets of each of the three joined components in $\boxtimes(n)=\Delta^n\star (\Delta^n)^\op\star \Delta^0$. We will similarly denote objects of $P_{\bigstar}$ by pairs $(S_0,v)$ of sets. Note that with this new coordinates the unquotient version of $\widetilde{s}$ can be described as
	\[
		\func{s:\scr{P}_{\bigstar}\to \scr{P}_{\boxtimes};(S_0,v) \mapsto (S_0,\emptyset,v) }
	\] 
	We define $\scr{P}_{G}\subset \scr{P}_{\boxtimes}$ as the nerve of the full subposet on those objects of the form
	\begin{itemize}
		\item $(S_0,\emptyset,v)$.
		\item $(S_0,S_1,v)$ such that	
		\begin{itemize}
			\item  $S_1 \neq \emptyset$ 
			\item  $S_1 \cup \set{v}$ contains all elements of $\boxtimes(n)$ greater than $\min(S_1)$, and 
			\item $\tau(S_1) \subset S_0$.
		\end{itemize}
	\end{itemize}
	We equip $\scr{P}_G$ with the induced marking producing a factorization $\func{\scr{P}_{\bigstar} \to[s_\alpha] \scr{P}_G \to[s_\beta] \scr{P}_{\boxtimes}}$. In light of this fact, we will turn our efforts into showing that $s_\alpha,s_\beta$ descend to equivalences $\widetilde{s}_{\alpha}$ and $\widetilde{s}_{\beta}$. We define a marking-preserving map of posets
	\[
		\func{r_{\alpha}: \scr{P}_G \to \scr{P}_{\bigstar},(S_0,S_1,v) \mapsto (S_0,v)}
	\]
	such that $r_{\alpha} \circ s_\alpha=\on{id}$. We, moreover, observe that there is a natural transformation $\epsilon_{\alpha}:s_{\alpha}\circ r_{\alpha}  \Rightarrow  \on{id}$ whose components are marked in $\scr{P}_{G}$. To check that $\epsilon_{\alpha}$ (and consequently $r_{\alpha}$) factors through the quotient it is enough to note that given $k$-simplices $\underline{S} \sim_A \underline{T}$ in $\scr{P}_G$ then it follows that $\underline{S} \sim_L \underline{T}$. We can now conclude that $\widetilde{s}_{\alpha}$ is an equivalence.
	
	We define a map of posets
	\[
	\func*{r_{\beta}: P_{\boxtimes}\to P_G; 
	(S_0,S_1,S_2)\mapsto \begin{cases}
		(S_0,\emptyset,v) \text{ if } S_1=\emptyset \\
		\left(S_0\cup\tau_n([\min(S_1),v)),[\min(S_1),v),v \right) \text{ otherwise.}
	\end{cases}
	}
	\] 
	such that $r_{\beta} \circ s_{\beta}=\on{id} $ and note that there is a map $S\to r_{\beta}(S)$ inducing a natural transformation $\epsilon_{\beta}: \on{id} \Rightarrow s_{\beta}\circ r_{\beta}$. To show that $\func{\scr{P}_G \to \scr{P}_{\boxtimes}}$ is an equivalence, it is sufficient to check that $r_{\beta}$ preserves markings, that $\epsilon_{\beta}$ descends to quotients, and that the components of the natural transformation become equivalences in the fibrant replacement of the localizations. We will prove here that $\epsilon_{\beta}$ descends to the quotient leaving the rest of the checks as exercises for the interested reader. Let $\underline{S}\sim_A \underline{T}$ be $k$-simplices and denote by ${}^\beta \underline{S}$ and ${}^{\beta}\underline{T}$ their images under $r_{\beta}$. Let $s_0^R,s_0^L$ be the truncation points for $\underline{S}$ and denote by ${}^{\beta}\!s_0^R, {}^{\beta}\!s_0^L$ the truncation points for ${}^{\beta}\!\underline{S}$. It is immediate to see that
	\[
	  {}^{\beta}\!s_0^R=s_0^R, \enspace \enspace \enspace {}^{\beta}\!s_0^L=\begin{cases}
	  	\max \big\{ s_0^L {,} \tau({}^{\beta}\!s_0^R)\big\}\enspace  \text{ if } \enspace (S_0)_1 \neq \emptyset \\
		s_0^{L} \text{ otherwise.}
	  \end{cases}
	\]
	 
	 This implies that, in order to show our claim, it suffices to check that for every $\ell \in [k]$ the ambidextrous truncations of ${}^{\beta}\!S_\ell$, ${}^{\beta}T_{\ell}$ with respect to $s_0^L,s_0^R$ coincide. If $(S_\ell)_1\neq \emptyset$ the conclusion follows immediately. We will also assume that $\kappa=\tau(\min((S_l)_1)) < \max((S_l)_0)$, since otherwise we would have ${}^{\beta}\!S_\ell={}^{\beta}T_\ell$. Denote by ${}^{\beta}\!\widehat{S}^A_\ell$ the truncation with respect to our chosen points. Then we observe that
	 \[
	 	{}^{\beta}\!\widehat{S}_\ell^A=\begin{cases}
	 		[s_0^L,\kappa] \cup (S_{\ell})_0^{\geq \kappa} \cup [\tau(\kappa),v] \enspace \text{ if } s_0^L \leq \kappa \\
			(S_{\ell})_0 \cup [\tau(\kappa),v] \text{ otherwise}
	 	\end{cases}
	 \]
	 where $(S_{\ell})_0^{\geq \kappa}$ stands for the obvious notation. Since this only depends on $S_{\ell}^A$ it follows that ${}^{\beta}\!\widehat{S}_\ell^A={}^{\beta}\widehat{T}_\ell^A$.
\end{proof}

We thus have completed the first step of the proof: 

\begin{corollary}\label{cor:harpazsliceisTwFib}
	The maps 
	\[
	\begin{tikzcd}
		\CC_{/y}  & \scr{M}_{y}\arrow[r,"\pi"]\arrow[l,"\rho"'] & \Tw(\CC)_{y}
	\end{tikzcd}
	\]
	are equivalences of naturally marked Cartesian fibrations over $\C$. 
\end{corollary}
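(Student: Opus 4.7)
The plan is to assemble the results established earlier in this subsection; the technical combinatorial work has already been carried out, and only a small packaging step remains.

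For $\pi$, I would simply appeal to the corollary following \autoref{prop:PiTrivKanFib}, which already shows that $\pi$ is a trivial Kan fibration over $\C$ compatible with the marked (Cartesian) edges. Such a map is automatically an equivalence in the Cartesian model structure on $\Set_\Delta^+/\C$, and in particular an equivalence of naturally marked Cartesian fibrations, so nothing further is required on this side of the span.

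For $\rho$, the approach is to combine the two preceding results: the corollary asserting that $\rho\colon \scr{M}_y \to \CC_{/y}$ is a map of naturally marked Cartesian fibrations over $\C$, together with the proposition showing that, fiberwise over each $x \in \C$, the induced map $\rho\colon \scr{M}_{x,y} \to (\CC_{/y})_x$ is a trivial Kan fibration, hence in particular a categorical equivalence. Since $\rho$ preserves both the base projection and the Cartesian edges, and is a fiberwise equivalence, the standard criterion for recognizing equivalences of Cartesian fibrations (see, e.g.\ \cite[Prop.~3.3.1.5]{HTT}) applies directly and identifies $\rho$ as an equivalence in the Cartesian model structure over $\C$.

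The only conceptually delicate point is this last invocation: being fiberwise a trivial Kan fibration does not, by itself, make $\rho$ a trivial Kan fibration over $\C$, but it does suffice for $\rho$ to be an equivalence in the Cartesian model structure, which is precisely what the statement of the corollary demands. This is where the only real subtlety lies, but it is a well-understood and frequently used principle in the theory of Cartesian fibrations, so the step is essentially a citation rather than a computation.
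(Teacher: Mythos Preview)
Your proposal is correct and matches the paper's approach: the corollary is stated without proof in the paper, and your reconstruction of the implicit argument---invoking the corollary after \autoref{prop:PiTrivKanFib} for $\pi$, and for $\rho$ combining the fact that it is a map of naturally marked Cartesian fibrations with the fiberwise trivial Kan fibration result and the standard recognition criterion---is exactly the intended packaging.
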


\begin{remark}
	This would already be sufficient, in light of \cite[\S 2.3]{HarpazEquivModels}, for us to conclude that $\Tw(\CC)_y$ classifies the restriction to $\C$ of the representable functor defined by $y$. It is not, however, sufficient to show that $\Tw(\CC)$ classifies the enhanced mapping functor. We still have work to do. 
\end{remark}

\subsection{Comparing the comparisons}

By \cite{HarpazEquivModels}, there are equivalences of marked simplicial sets 
\[
\func{(\CC_{/y})_x\to[\sim]  (\CC^{x/})_y \to[\sim] \on{Un}^{\on{sc}}_\ast\left(\mathfrak{C}[\CC](x,y)\right)}
\]
where $\on{Un}^{\on{sc}}_\ast$ is the scaled \emph{coCartesian} unstraightening, and $\CC^{x/}$ denotes the scaled slices defined using the fat join in \cite[4.1.5]{LurieGoodwillie}.\footnote{Technically speaking, in \cite{LurieGoodwillie} Lurie defines a scaled coCartesian fibration $\func{\overline{\CC^{x/}}\to \CC}$. We will make use of the pullback along $\func{\C\to \CC}$, and denote it by $\CC^{x/}$.} We also have, by \autoref{prop:comparisonbeta}, a comparison map
\[
\beta:\Tw(\CC)_{x,y}\to \on{Un}^{+}_\ast\left(\mathfrak{C}[\CC](x,y)\right),
\]  
where $\on{Un}^+_\ast$ is the marked \emph{Cartesian} unstraightening. 

We now aim to compare these two comparison maps, using the equivalence between $\Tw(\CC)_y$ and $\CC_{/y}$ of \autoref{cor:harpazsliceisTwFib}. The first step is to relate the scaled coCartesian and marked Cartesian straightenings over the point. 

\begin{construction}
	 We denote the former by $\on{St}^{\on{sc}}$ and the latter by $\on{St}^+$, leaving the point implicit. These give us two functors 
	\[
	\func*{\on{St}^{\on{sc}},\on{St}^{+}:\Set_{\Delta^+}\to \Set_{\Delta}^+}
	\]
	By \cite[Lem. 4.3.3]{ADS2Nerve}, to display a natural equivalence between them it will suffice to display it on simplices. By definition, we have that
	$\on{St}^{\on{sc}}(\Delta^n) =\mathfrak{C}^{\on{sc}}[\widetilde{\Box}(n)](x,y) 
	$
	and 
	$
	\on{St}^{+}(\Delta^n) =\mathfrak{C}^{\on{sc}}[\widetilde{\bigstar}(n)](x,y). 
	$
	
	Since the collapse map 
	\[
	\func*{\Delta^n\times\Delta^1\to \Delta^n\star\Delta^0;
		(i,k)\mapsto \begin{cases}
		i & k=0 \\
		n+1 & k=1
		\end{cases}}
	\]
	preserves the scaling and ordered partitions, we thus obtain compatible maps 
	$
	\func{\theta_n:\on{St}^{\on{sc}}((\Delta^n)^\flat)\to \on{St}^{+}((\Delta^n)^\flat)}
	$
	and 
	$
	\func{\on{St}^{\on{sc}}((\Delta^1)^\sharp)\to \on{St}^{+}((\Delta^n)^\sharp)}
	$
	Moreover, the triangles 
	\[
	\begin{tikzcd}
	\on{St}^{\on{sc}}((\Delta^n)^\flat)\arrow[rr,"\theta_n"]\arrow[dr,"p"'] & & \on{St}^{+}((\Delta^n)^\flat)\arrow[dl,"q"]\\
	& (\Delta^n)^\flat 
	\end{tikzcd}
	\]
	commute, where $q$ is the map $\pi$ of \cite[Prop 3.2.1.14]{HTT}, and $p$ is the map $\alpha$ of \cite[Prop. 3.6.1]{LurieGoodwillie}. Since both of these are marked equivalences, we have that $\theta_n$ is as well. Thus, $\theta$ extends to a natural equivalence $\func{\theta:\on{St}^{\on{sc}}\nat \on{St}^+}$. 
\end{construction}

It immediately follows that 

\begin{lemma}\label{lem:equivofunst}
	The natural transformation $\mu:\on{Un}^{\on{sc}}\to \on{Un}^+$ adjoint to $\theta$ is an equivalence. 
\end{lemma}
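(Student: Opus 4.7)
The approach is to invoke the calculus of mates between Quillen equivalences. Recall that both $(\on{St}^{\on{sc}}, \on{Un}^{\on{sc}})$ and $(\on{St}^+, \on{Un}^+)$ are Quillen equivalences---the scaled coCartesian version via \cite{LurieGoodwillie}, and the marked Cartesian version via \cite{HTT}---so the derived units and counits of each adjunction are weak equivalences on fibrant-cofibrant objects. Moreover, each right Quillen functor preserves weak equivalences between fibrant objects by Ken Brown's lemma. The general principle that a mate of a natural weak equivalence between left Quillen functors is again a weak equivalence (between the corresponding right Quillen functors) is precisely what is required.

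Concretely, for a fibrant object $M$, one realizes $\mu_M$ as the zigzag
\[
\on{Un}^{\on{sc}}(M) \xrightarrow{\eta^+} \on{Un}^+(\on{St}^+(\on{Un}^{\on{sc}}(M))) \xleftarrow{\on{Un}^+(\theta)} \on{Un}^+(\on{St}^{\on{sc}}(\on{Un}^{\on{sc}}(M))) \xrightarrow{\on{Un}^+(\varepsilon^{\on{sc}})} \on{Un}^+(M),
\]
where $\eta^+$ is the unit of $(\on{St}^+,\on{Un}^+)$ and $\varepsilon^{\on{sc}}$ is the counit of $(\on{St}^{\on{sc}},\on{Un}^{\on{sc}})$, applied after a suitable cofibrant replacement of $\on{Un}^{\on{sc}}(M)$. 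The first arrow is a weak equivalence since $(\on{St}^+, \on{Un}^+)$ is a Quillen equivalence and $\on{Un}^{\on{sc}}(M)$ is fibrant; the last arrow is a weak equivalence since $(\on{St}^{\on{sc}}, \on{Un}^{\on{sc}})$ is a Quillen equivalence and $M$ is fibrant; and the middle arrow is $\on{Un}^+$ applied to a weak equivalence $\theta$ between appropriate objects, which $\on{Un}^+$ preserves by Ken Brown's lemma. Two-out-of-three then yields that $\mu_M$ is a weak equivalence for every fibrant $M$.

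The main technical obstacle is the bookkeeping of cofibrancy and fibrancy hypotheses needed to apply the preservation results cleanly---in particular ensuring that $\theta$ is evaluated at a cofibrant object and that the intermediate objects are fibrant, which may require explicit functorial replacements. A cleaner, more conceptual alternative bypasses these concerns: by passing to derived mapping spaces and applying the Yoneda lemma, the natural transformation $[A,\on{Un}^{\on{sc}}(M)] \to [A,\on{Un}^+(M)]$ induced by $\mu$ corresponds, under the two derived adjunctions, to the natural transformation $[\on{St}^+(A),M] \to [\on{St}^{\on{sc}}(A),M]$ obtained from $\theta_A$. Since $\theta_A$ was shown in the construction preceding the statement to be a natural weak equivalence of cofibrant objects, the latter is an equivalence of derived mapping spaces, and hence so is $\mu_M$.
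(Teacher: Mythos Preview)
Your argument is correct, and indeed the paper offers no proof at all: the lemma is simply prefaced by ``It immediately follows that'', treating it as a formal consequence of the construction of $\theta$. What you have written is the standard model-categorical justification for why a natural weak equivalence between left Quillen functors in a pair of Quillen equivalences induces a natural weak equivalence between the right adjoints. The Yoneda argument in your final paragraph is the cleanest route and matches the spirit of the paper's one-line dismissal.

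One small point of care: your displayed zigzag contains a wrong-way arrow $\xleftarrow{\on{Un}^+(\theta)}$, so it is not literally a description of the map $\mu_M$ but rather of its image in the homotopy category after inverting $\theta$. Strictly, the mate of $\theta:\on{St}^{\on{sc}}\Rightarrow\on{St}^+$ runs $\on{Un}^+\Rightarrow\on{Un}^{\on{sc}}$, the opposite direction to what the paper writes; since $\theta$ is a levelwise weak equivalence this is harmless at the derived level, and the paper is evidently being informal about direction (compare the bottom arrow of diagram (\ref{diag:equivcomparisons}), which points the other way). Your derived-mapping-space argument sidesteps this entirely and is the version to keep.
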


It now remains only for us to show 

\begin{proposition}\label{prop:comparisonscommute}
	The diagram 
	\begin{equation}\label{diag:equivcomparisons}
	\begin{tikzcd}
	(\CC_{/y})_x\arrow[d,"f","\simeq"']  & \scr{M}_{x,y}\arrow[r,"\sim"]\arrow[l,"\sim"'] & \Tw(\CC)_{(x,y)}\arrow[d, "\beta"]\\
	\on{Un}^{\on{sc}}_{\ast}(\mathfrak{C}^{\on{sc}}[\DD](x,y)) & & \on{Un}^+_{\ast}(\mathfrak{C}^{\on{sc}}[\DD](x,y))\arrow[ll,"\simeq"]
	\end{tikzcd}
	\end{equation}
	commutes up to natural equivalence. 
\end{proposition}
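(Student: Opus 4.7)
The plan is to verify the commutativity by unfolding every map via adjunction into an explicit simplicial description in terms of the cosimplicial objects from \autoref{defn:cosimplicialcompendium}, and then producing a universal comparison on $\scr{M}_{x,y}$ from which both sides of the square factor up to natural homotopy. Concretely, an $n$-simplex $\sigma \in \scr{M}_{x,y}$ is a scaled map out of $\boxtimes(n)^R$ with prescribed boundary behavior. The composite $\beta \circ \pi$ first restricts $\sigma$ to $Q(n)^R$ and then applies the construction of \autoref{prop:comparisonbeta}, yielding a simplicial functor built via the doubling maps $\zeta_i$. The composite $\mu \circ f \circ \rho$ first restricts to $\bigstar(n)^R$, applies the Harpaz equivalence $f$ (which admits an analogous description using $\bigstar$ in place of $Q$), and then composes with $\mu$, whose straightening adjoint is the collapse map $\Box(n) \to \bigstar(n)$ preceding \autoref{lem:equivofunst}.

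Next, I would define a universal comparison $\widehat{\gamma}_\sigma$ directly on the rigidification of $\widetilde{\boxtimes}(n)$ by analogy with $\gamma_\sigma$ of \autoref{prop:comparisonbeta}. Both of the above reconstructions then arise from $\widehat{\gamma}_\sigma$ by precomposition with explicit maps of scaled simplicial categories: one using $Q \hookrightarrow \boxtimes$ combined with the $\zeta_i$, the other using $\bigstar \hookrightarrow \boxtimes$ combined with the collapse $\Box \to \bigstar$. By \autoref{lem:posetquotientmappingspaces}, showing that these two precompositions induce equivalent maps on the relevant mapping spaces reduces to a comparison of maps of poset quotients sitting inside $(\scr{P}_{\boxtimes(n)})_{/\sim_A}$. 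The required zigzag of equivalences I would construct by reusing the technology from the proof of \autoref{prop:PiTrivKanFib} --- the retraction $r_n: \boxtimes(n)^R \to Q(n)^R$ with its scaled homotopy $i_n \circ r_n \Rightarrow \on{id}$ --- together with the retracts $r_\alpha$, $r_\beta$ constructed in the proof preceding \autoref{cor:harpazsliceisTwFib}. These combine, naturally in $\sigma$, into a zigzag of marked equivalences connecting $\beta\pi$ and $\mu f \rho$ over $\C$.

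The main obstacle will be the combinatorial verification that the $\zeta_i$-based and $\Box$-based descriptions of the mapping space from $0$ to $v$ in the rigidification of $\boxtimes(n)$ agree up to $\sim_A$-compatible zigzag. This check is entirely parallel in flavor to the arguments already present in \autoref{lem:equivofunst} and \autoref{cor:harpazsliceisTwFib}, but requires careful bookkeeping of which $2$-simplices of $\boxtimes$ are declared thin --- in particular the simplices $\Delta^{\{j,\overline{i},v\}}$ with $i \leq j$ encode precisely the compatibility data between the two paths around the square, and it is their presence in the scaling that makes the diagram commute up to coherent homotopy rather than merely up to non-canonical equivalence.
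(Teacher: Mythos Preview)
Your general strategy --- unwind both composites through the cosimplicial objects $\Box$, $\bigstar$, $Q$, $\boxtimes$ and compare at the level of poset-quotient mapping spaces --- is exactly the setup the paper uses. Where you diverge is in the final step, and there your plan is vaguer and less direct than the paper's argument.

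The paper first makes explicit (in \autoref{rmk:zetasbecomeposetmap} and the paragraph following it) that both composites $\scr{M}_{x,y}\to \on{Un}^{\on{sc}}_\ast(\mathfrak{C}^{\on{sc}}[\DD](x,y))$ are obtained by pulling back along maps of posets
\[
G,H:P_{\Box(n)}\longrightarrow P_{\boxtimes(n)},\qquad G(S_0,S_1)=(S_0,\tau(S_0),v),\quad H(S_0,S_1)=(S_0,\emptyset,v),
\]
which descend to the $\sim_A$-quotients. Once the problem is phrased this way, the paper writes down a \emph{single explicit marked homotopy}: for $\rho:\Delta^n\to\Delta^1$ with first $1$ at position $i$, set
\[
h_\rho(S_0,S_1)=(S_0,\tau(S_0^{\geq i}),v).
\]
This interpolates between $H$ (when $\rho\equiv 0$) and $G$ (when $\rho\equiv 1$), and one checks directly that it descends to quotients, respects simplicial identities, and is marked. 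That is the entire content of the proof.

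Your proposed route --- assembling a zigzag from the retracts $r_n$, $r_\alpha$, $r_\beta$ of the earlier proofs --- does not obviously produce this. Those retracts establish that the inclusions $Q(n)^R\hookrightarrow\boxtimes(n)^R$ and $\scr{P}_{\bigstar}\hookrightarrow\scr{P}_{\boxtimes}$ are equivalences; they do not, by themselves, relate the two specific maps $G$ and $H$, which have the same source and target. To extract a homotopy $G\simeq H$ from them you would still need to identify some natural comparison between the two images inside $P_{\boxtimes(n)}$, and at that point you are essentially rediscovering $h_\rho$ (or at least the natural transformation $H\Rightarrow G$ coming from the inclusion $(S_0,\emptyset,v)\subset(S_0,\tau(S_0),v)$, which then has to be promoted to a genuine simplicial homotopy). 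Your remark about the thin simplices $\Delta^{\{j,\overline{i},v\}}$ is well-observed, but it is precisely the verification that the components of $h_\rho$ are marked that uses this, not the construction of a zigzag.

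In short: your unwinding is correct, but replace the zigzag plan with the direct interpolation $h_\rho$ above. It is both simpler and what actually needs to be checked.
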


To effect a proof, we first note that the maps $f$ and $\beta$ are both induced by maps of posets. For the reader's convenience, we briefly unwind how in the case of $\beta$. For $f$, we merely state the poset map in question, and leave it to the interested reader to unwind the definitions. 

\begin{remark}\label{rmk:zetasbecomeposetmap}
	Given an $n$-simplex $\sigma$ of $\Tw(\CC)_{(x,y)}$, the simplex $\beta(\sigma)$ of $\on{Un}^{+}\left(\mathfrak{C}^{\on{sc}}[\DD](x,y)\right)$ is given by pulling back the rigidification $\mathfrak{C}[\tilde{\sigma}]$ of the adjoint map 
	\[
	\func{\tilde{\sigma}: \widetilde{Q}(n) \to \CC}
	\]
	along the maps $\zeta_i$ constructed in \autoref{prop:comparisonbeta}. Using the poset-quotient description of the mapping spaces, however, one can easily check that the $\zeta_i$'s combine to define a map 
	\[
		\func{B:\scr{P}_{\bigstar(n)} \to \scr{P}_{Q(n)}; (S_0,v) \mapsto (S_0,\tau(S_0))}
	\]
	with associated map on the quotient $\func{\widetilde{B}:\mathfrak{C}^{\on{sc}}[\widetilde{\bigstar}(n)](\ast_0,\ast_1)\to\mathfrak{C}^{\on{sc}}[\widetilde{Q}(n)](\ast_0,\ast_1)}$. That is, $\beta(\sigma)$ is defined by pulling back $\mathfrak{C}[\tilde{\sigma}]$ along $\widetilde{B}$. 
\end{remark}   

More generally, let $\sigma$ be a simplex of $\scr{M}_{x,y}$ and $\func{\tilde{\sigma}:\widetilde{\boxtimes}(n)\to \CC}$ its adjoint. The right-hand composite $\func{\gamma:\scr{M}_{x,y}\to \on{Un}^{\on{sc}}_{\ast}(\mathfrak{C}^{\on{sc}}[\DD](x,y))}$ in  (\ref{diag:equivcomparisons})  is given by pulling $\mathfrak{C}[\tilde{\sigma}]$ back along  a map  $\mathfrak{C}^{\on{sc}}[(\Box(n))^R](\ast_0,\ast_1)\to\mathfrak{C}^{\on{sc}}[\widetilde{\boxtimes}(n)](\ast_0,\ast_1)$  induced by\footnote{In point of fact, unraveling the definitions would lead one to believe that map is induced by $\func{(S_0,S_1) \mapsto (S_0,\tau(S_0),\emptyset)}$, however, both this map and $G$ lead to the same map on quotients, so the distinction is irrelevant.}
\[
\func{G: P_{\Box(n)}\to P_{\boxtimes(n)};
(S_0,S_1) \mapsto (S_0,\tau(S_0),v).
}
\]
The left hand composite $\func{\eta:\scr{M}_{x,y}\to \on{Un}^{\on{sc}}_{\ast}(\mathfrak{C}^{\on{sc}}[\DD](x,y))}$ in  (\ref{diag:equivcomparisons}) is given by pulling $\mathfrak{C}[\tilde{\sigma}]$ back along  a map  $\mathfrak{C}^{\on{sc}}[(\Box(n))^R](\ast_0,\ast_1)\to\mathfrak{C}^{\on{sc}}[\widetilde{\boxtimes}(n)](\ast_0,\ast_1)$ induced by 
\[
\func{
H:P_{\Box(n)}\to P_{\boxtimes(n)};
(S_0,S_1)\mapsto (S_0,\emptyset,\{v\}).
}
\]

With these definitions in place, we can proceed to the final step of our proof.

\begin{proof}[Proof of \autoref{prop:comparisonscommute}]
	We will define an explicit homotopy $\func{(\Delta^1)^\sharp\times\scr{M}_{x,y}\to \on{Un}^{\on{sc}}_{\ast}(\mathfrak{C}^{\on{sc}}[\DD](x,y))}$ between $\gamma$ and $\eta$. 
	
	Given an $n$-simplex $\func{(\rho,\sigma):\Delta^n\to (\Delta^1)^\sharp\times \scr{M}_{x,y}}$, we note that $\rho$ is uniquely specified by $0\leq i\leq n+1$: 
	\[
	(0,0,\ldots, 0,\overbrace{1}^i, 1,\ldots, 1). 
	\] 
	We define, for $S_0\subset [n]$, the subset $S_0^{\geq i}:=\{s\in S_0\mid s\geq i\}$ and then define a map 
	\[
	\func{
		h_\rho: P_{\square(n)}\to P_{\boxtimes(n)}; 
		(S_0,S_1)\mapsto (S_0, \tau(S_0^{\geq i}),v)
	}
	\]
	Note that, when $i=n+1$ (i.e. $\rho$ is constant on $0$) we have that $\tau(S_0^{\geq i})=\emptyset$, so that the map specializes to $H$. Similarly, when $i=0$ (i.e. $\rho$ is constant on $1$) $S_0^{\geq i}=S_0$, so that the map specializes to $G$. 
	
	Let us check that $h_{\rho}$ descend to quotients. Note that the case where $\rho$ is constant on $0$. In order to do so, given a $k$-simplex $\underline{S}$ we compute its ambidextrous truncation in $\scr{P}_{\boxtimes(n)}$. Let $l\in [k]$ and denote $(S_l)_0 \cap [s_0^L,n]=\widehat{S}_{l}$. Then we obtain
	\[
		h_{\rho}(\underline{S})^A_l=\begin{cases}
			(\widehat{S}_l,\tau(\widehat{S}_l)) \text{ if } i \leq s_0^L\\
			\left(\widehat{S}_l,\tau(\widehat{S}_l)\cap [n+1,\tau(i)]\right) \text{ if }s_0^L < i < n+1 \\
			(\widehat{S}_l,\emptyset,v) \text{ if }i=n+1
		\end{cases}
	\]
	since this only depends on the truncation of $\underline{S}$ the claim follows. It is immediate that the maps respect the simplicial identities, so sending a simplex $(\rho,\sigma)\in \Delta^1\times \scr{M}_{x,y}$ to the simplex $h_\rho^\ast(\mathfrak{C}[\tilde{\sigma}])\in \on{Un}^{\on{sc}}(\mathfrak{C}[\CC](x,y))$ defines a homotopy $\func{\eta\nat \gamma}$. 
	
	To see that it is a marked homotopy, consider the $1$-simplex $(0,1)$ in $\Delta^1$, and a degenerate $1$-simplex 
	$\func{\sigma:\boxtimes\!(1)_\sharp \to \CC}$
	in $\scr{M}_{x,y}$. This corresponds to a map 
	$\func{(\scr{P}_{\boxtimes(1)})^\sharp \to \mathfrak{C}[\CC](x,y),}$
	and so pulling back along $h_{\{0,1\}}:\scr{P}_{\square(1)}\to \scr{P}_{\boxtimes(1)}$ yields a map 
	\[
	\func{h_{\{0,1\}}^\ast(\gamma):(\scr{P}_{\square(1)})^\sharp\to \mathfrak{C}[\CC](x,y)},
	\]
	i.e., a marked morphism in $\on{Un}^{\on{sc}}(\mathfrak{C}[\CC](x,y))$. We have thus defined a marked homotopy as desired, and the proof is complete.
\end{proof}

For completeness, we can now give

\begin{proof}[Proof of \autoref{thm:TwClassifiesMap}]
	Using \autoref{prop:comparisonscommute}, the theorem follows immediately by 2-out-of-3 from \autoref{cor:harpazsliceisTwFib}, the equivalence of \cite[Prop. 2.24]{HarpazEquivModels}, and \autoref{lem:equivofunst}. 
\end{proof}

\section{Natural transformations as an end}\label{sec:Nat}
In this section we will denote by $X$ be a maximally scaled simplicial set and by $\DD$ an $\infty$-bicategory that will remain fixed throughout. Given a pair of functors $\func{F,G: X \to \DD}$ we will denote the associated mapping category in $\DD^{X}$ by $\on{Nat}\limits_X(F,G)$. The aim of this section is to show that  $\operatorname{Nat}\limits_X(F,G)$ can be expressed as the limit of the functor
\[
\begin{tikzcd}[ampersand replacement=\&]
N_{(F,G)}:\Tw(X)^{\op}  \arrow[r] \& X^{\op}\times X \arrow[rr,"F^{\op}\times G"] \& \& \DD^{\op}\times \DD \arrow[rr,"\on{Map}_{\DD}(\mathblank{,}\mathblank)"] \& \& \goth{Cat}_{\infty}.
\end{tikzcd}
\]
That is to say, the $\infty$-category of natural transformations can be obtained as an end, in the terminology of \cite{GHN}.
\begin{definition}
	Let $\func{\ell: \D^{X}\times \left(\D^{X}\right)^{\op} \to \on{Fun}}(\Tw(X),\D\times \D^{\op})$ be the functor that maps a simplex of the product $\func{\sigma_1: X \times \Delta^n \to \D}$, $\func{\sigma_2: X \times (\Delta^n)^{\op} \to \D}$ to the composite
	\[
	\begin{tikzcd}[ampersand replacement=\&]
	\Tw(X)\times \Delta^n \arrow[r] \& X \times X^{\op} \times \Delta^n \arrow[r,"(\sigma_1{,}\sigma_2^{\op})"] \& \D\times \D^{\op}.
	\end{tikzcd}
	\]
	We define a marked simplicial set $\mathcal{L}_X$ equipped with  Cartesian fibration to $\D^{X} \times \left(\D^X\right)^{\op}$ via the pullback square
	\[
	\begin{tikzcd}[ampersand replacement=\&]
	\mathcal{L}_{X} \arrow[d]\arrow[dr,phantom,"\lrcorner",very near start] \arrow[r] \& \on{Fun}(\Tw(X)^{\sharp},\Tw(\DD)^{\dagger})  \arrow[d] \\
	\D^{X}\times \left(\D^{X}\right)^{\op} \arrow[r,"\ell"] \& \on{Fun}(\Tw(X),\D\times \D^{\op}).
	\end{tikzcd}
	\]
\end{definition}
\begin{remark}
	Given a pair of functors $F,G$ we see that the fiber $\left(\mathcal{L}_{X}\right)_{(F,G)}$ is given by the category of “ways of completing the commutative diagram''
	\[
	\begin{tikzcd}[ampersand replacement=\&]
	\Tw(X) \arrow[r,dotted] \arrow[d] \& \Tw(\DD) \arrow[d] \\
	X \times X^{\op} \arrow[r,"F \times G^{\op}"] \& \D \times \D^{\op}
	\end{tikzcd}
	\]
	In other words, it is precisely the category of Cartesian sections associated to $N_{(F,G)}$. In particular it follows from \cite[Cor. 3.3.3.2]{HTT} that $\left(\mathcal{L}_{X}\right)_{(F,G)}$ is a model for the limit $\lim\limits_{\on{Tw(X)^{\op}}}N_{(F,G)}$. We will abuse notation and denote the fiber by $\mathcal{L}_{(F,G)}$ when the diagram category is clear from the context.
\end{remark}
Recall the canonical map $\func{\DD^{X}\times X \to \DD}$ and note that since $\Tw(\mathblank)$ preserves limits we can use the tensor-hom adjunction to produce 
\[
\func{u:\Tw\left(\DD^{X}\right) \to \on{Fun}(\Tw(X)^{\sharp},\Tw(\DD)^{\dagger})}
\]
fitting into the commutative diagram 
\[
\begin{tikzcd}[ampersand replacement=\&]
\Tw\left(\DD^X\right) \arrow[d] \arrow[r,"\theta"] \& \on{Fun}(\Tw(X)^{\sharp},\Tw(\DD)^{\dagger})  \arrow[d] \\
\D^{X}\times \left(\D^{X}\right)^{\op} \arrow[r,"\ell"] \& \on{Fun}(\Tw(X),\D\times \D^{\op}).
\end{tikzcd}
\]
This in turn yields a map of Cartesian fibrations $\func{\Theta_X: \Tw(\DD^X) \to \scr{L}_X}$ which we call the  \emph{canonical comparison map}. The rest of this section is devoted to showing that $\Theta_X$ is a fiberwise equivalence for every simplicial set $X$. Our first observation is that both constructions behave contravariantly in the simplicial set $X$ thus producing functors
\[
\func{\Tw\left(\DD^{(\mathblank)}\right), \scr{L}_{(\mathblank)}: \on{Set}_{\Delta}^{\op} \to \on{Set}_{\Delta}^{+}}
\]
equipped with a natural transformation $\func{\Theta:\Tw\left(\DD^{(\mathblank)}\right) \nat \scr{L}_{(\mathblank)} }$. We can now state the main result of the section.
\begin{theorem}\label{thm:nat}
	For every simplicial set $X\in\on{Set}_{\Delta}$, the map $\func{\Theta_X:\Tw\left(\DD^{X}\right) \to \scr{L}_{X} }$ is an equivalence of Cartesian fibrations over $\D^{X}\times \left(\D^{X}\right)^{\op}$.
\end{theorem}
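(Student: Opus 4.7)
The plan is a \emph{dévissage} on the simplicial set $X$: first reduce to fiberwise equivalences, then reduce to the cases $X = \Delta^0$ and $X = \Delta^1$ by exploiting a colimit-to-limit behavior of both constructions.

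First I would reduce to checking that, for each pair $(F,G)\in \D^X \times \left(\D^X\right)^{\op}$, the induced map on fibers is an equivalence of $\infty$-categories. Applying \autoref{thm:TwClassifiesMap} to the $\infty$-bicategory $\DD^X$, the fiber $\Tw(\DD^X)_{(F,G)}$ is equivalent to $\on{Nat}_X(F,G)$. By \cite[Cor.~3.3.3.2]{HTT} together with the remark preceding the theorem, the fiber $(\scr{L}_X)_{(F,G)}$ is a model for the end $\lim_{\Tw(X)^{\op}} N_{(F,G)}$. So the theorem reduces to showing that the comparison map $\Theta_{(F,G)}\colon \on{Nat}_X(F,G)\to \lim_{\Tw(X)^{\op}} N_{(F,G)}$ is an equivalence, naturally in $(F,G)$.

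Next I would show that both functors $X\mapsto \on{Nat}_X(F,G)$ and $X\mapsto \lim_{\Tw(X)^{\op}} N_{(F,G)}$ convert homotopy pushouts in $X$ into homotopy pullbacks, compatibly with $\Theta$. For the former, this uses the exponential adjunction between $\DD^{(-)}$ and $(-)\times X$ together with the fact that $\on{Nat}_X$ is the internal hom object in $\DD$. For the latter, the functor $\Tw$ is defined via a cosimplicial object and so preserves colimits of simplicial sets; thus $\Tw(Y\cup_Z W)\cong \Tw(Y)\cup_{\Tw(Z)} \Tw(W)$, and limits distribute over colimits of diagrams. Writing $X = \colim_{\Delta^n\to X}\Delta^n$ as a colimit over its category of non-degenerate simplices, and noting that the spine inclusion $\on{Sp}^n\hookrightarrow \Delta^n$ is inner anodyne (hence a scaled trivial cofibration with maximal scaling), this reduces the problem first to $X = \Delta^n$ and then, via the iterated pushout decomposition of $\on{Sp}^n$ over $\Delta^0$, to the two base cases $X = \Delta^0$ and $X = \Delta^1$.

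For $X = \Delta^0$ both sides reduce to $\Tw(\DD)$ and $\Theta_{\Delta^0}$ is the identity after unwinding definitions. For $X = \Delta^1$, a pair $(F,G)$ corresponds to a pair of $1$-morphisms $f\colon F_0\to F_1$, $g\colon G_0\to G_1$ in $\DD$. The twisted arrow category $\Tw(\Delta^1)$ is the cospan $\id_0\to (0\to 1) \leftarrow \id_1$, so the end $\lim_{\Tw(\Delta^1)^{\op}} N_{(F,G)}$ is the pullback
\[
\on{Map}_{\DD}(F_0,G_0)\times_{\on{Map}_{\DD}(F_0,G_1)}\on{Map}_{\DD}(F_1,G_1).
\]
I would then produce an explicit equivalence between $\on{Nat}_{\Delta^1}(F,G)$ and this pullback by directly analysing the $n$-simplices of both sides --- an $n$-simplex of $\on{Nat}_{\Delta^1}(F,G)$ being a suitably scaled map from $Q(n)\times\Delta^1$ into $\DD$ --- and verifying that the natural restriction maps assemble into $\Theta_{\Delta^1}$. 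The main obstacle will precisely be this last step: although the target pullback description is clean, exhibiting an equivalence manifestly compatible with $\Theta_{\Delta^1}$ requires careful bookkeeping with the scalings on the cosimplicial objects $Q$, $\bigstar$, and $\boxtimes$ from \autoref{defn:cosimplicialcompendium}, and will most likely be handled by scaled-anodyne arguments in the spirit of \autoref{lem:pivot} and the comparisons in \autoref{sec:classification}.
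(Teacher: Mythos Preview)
Your overall architecture matches the paper's proof exactly: reduce to a fiberwise statement, d\'evissage down to $X=\Delta^0$ and $X=\Delta^1$, then handle $\Delta^1$ by explicit combinatorics with the cosimplicial objects. A few points of comparison and a couple of corrections are in order.

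\textbf{The reduction.} The paper's version of your second paragraph is \autoref{prop:reduction}, and it is organised a bit more carefully than your sketch. Rather than writing $X$ as a colimit over its ``category of non-degenerate simplices'' (which does not compute $X$ in general), the paper uses skeletal induction: $X\cong\colim_n\on{sk}_n(X)$ together with the cell-attachment pushouts $\partial\Delta^l\hookrightarrow\Delta^l$. To know that the strict limits on both sides are homotopy limits one needs that the structure maps are fibrations; this is \autoref{prop:cof} (cofibrations in $X$ induce Joyal fibrations on fibres), which then feeds into \autoref{prop:holim}. Your phrase ``limits distribute over colimits of diagrams'' is precisely what \autoref{prop:holim} makes rigorous, and it is not automatic. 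For the step $\Delta^n\rightsquigarrow\{\Delta^0,\Delta^1\}$ the paper uses inner-horn inclusions (\autoref{lem:innercofinal}) instead of spines; either works, but in both cases one must check separately that the induced map on the $\scr{L}$ side is an equivalence (for the paper this is cofinality of $\Tw(\Lambda^n_i)\hookrightarrow\Tw(\Delta^n)$), and your sketch omits this.

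\textbf{The case $X=\Delta^1$.} Two remarks. First, a small slip: with the paper's conventions $\Tw(\Delta^1)$ is the \emph{span} $aa\leftarrow ab\rightarrow bb$ (see the notational remark just before \autoref{prop:scaledanodyne}), so it is $\Tw(\Delta^1)^{\op}$ that is the cospan---your pullback description of the end is still correct. Second, and more substantially, the paper does not attack $\Theta_{\Delta^1}$ by comparing $n$-simplices of source and target directly. Instead it introduces a third cosimplicial object $\scr{R}$ interpolating between the cosimplicial objects $\scr{T}$ (governing $\Tw(\DD^{\Delta^1})$) and $\scr{Q}$ (governing $\scr{L}_{\Delta^1}$), proves that $\scr{Q}(n)\to\scr{R}(n)$ is scaled anodyne for all $n$ (\autoref{prop:scaledanodyne}, the main combinatorial input, done via \autoref{lem:pivot}), and then shows that the composite $\psi^\ast\mu^\ast$ on the resulting intermediate object $\scr{X}_{(F,G)}$ is homotopic to the identity via two explicit simplicial homotopies. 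Your closing paragraph correctly anticipates that this is where the real work lies and that it will be ``handled by scaled-anodyne arguments in the spirit of \autoref{lem:pivot}''; the specific device you are missing is the interpolating object $\scr{R}$, without which it is hard to see how to verify that the particular map $\Theta_{\Delta^1}$ (rather than some abstract comparison) is an equivalence.
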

Our proof strategy will consist in reducing the problem to the case $X=\Delta^n$ with $n=0,1$. In order to achieve this we will show that the both functors are homotopically well-behaved.
\begin{proposition}\label{prop:cof}
	Let $\func{\alpha:X \to Y}$ be a cofibration of simplicial sets. Then for every pair of functors $F,G \in \D^{Y} $ the induced maps
	\[
	\func{\Tw\left(\DD^{Y}\right)_{(F,G)} \to \Tw\left(\DD^{X}\right)_{(\alpha^{*}F,\alpha^{*}G)}, } \enspace \func{ \left(\scr{L}_{Y}\right)_{(F,G)} \to  \left(\scr{L}_{X}\right)_{(\alpha^{*}F,\alpha^{*}G)}}
	\]
	are fibrations in the Joyal model structure.
\end{proposition}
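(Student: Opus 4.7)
The plan is to verify each of the two fibration assertions by translating its defining lifting problem---right lifting against the trivial cofibrations of the Joyal model structure---into an extension problem against the fibrant scaled simplicial set $\DD$ via the hom-tensor adjunction, and then solving that extension problem by a pushout-product argument. The key inputs are the cosimplicial object $Q$ of \autoref{defn:CosimpQ} (together with its left Kan extension $\widehat{Q}\colon \Set_\Delta \to \scsSet$) and the compatibility of the bicategorical model structure on $\scsSet$ with cartesian products.

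For the first map, the hom-tensor adjunction identifies $K$-simplices of $\Tw(\DD^Z)_{(F_Z, G_Z)}$ with maps $\widehat{Q}(K) \times Z \to \DD$ extending the prescribed map $(K \coprod K^{\op}) \times Z \to Z \coprod Z \xrightarrow{F_Z \coprod G_Z} \DD$. A lifting problem for $\Tw(\DD^Y)_{(F,G)} \to \Tw(\DD^X)_{(\alpha^*F, \alpha^*G)}$ against a trivial Joyal cofibration $A \to B$ therefore becomes an extension problem for $\DD$ along a map of the form $T_Y(A) \cup_{T_X(A)} T_X(B) \to T_Y(B)$, where
\[
T_Z(K) := \widehat{Q}(K) \times Z \coprod\limits_{(K \coprod K^{\op}) \times Z} \bigl(Z \coprod Z\bigr).
\]
A solution exists, because $\DD$ is fibrant, provided the left map is a bicategorical trivial cofibration. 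A standard pushout-product argument using that $\alpha\colon X \to Y$ is a cofibration reduces this to the assertion that $\widehat{Q}(A) \to \widehat{Q}(B)$ is a bicategorical trivial cofibration, i.e., that $\widehat{Q}$ is left Quillen from the Joyal to the bicategorical model structure. This in turn follows from monomorphism preservation together with the fact, immediate from \autoref{thm:TwFibrant}, that the right adjoint $\Tw$ carries every $\infty$-bicategory to a quasi-category.

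The argument for $(\scr{L}_Y)_{(F,G)} \to (\scr{L}_X)_{(\alpha^*F, \alpha^*G)}$ follows the same pattern but is more direct because $\Tw$, being a nerve, preserves monomorphisms: the induced $\Tw(\alpha)^\sharp\colon \Tw(X)^\sharp \to \Tw(Y)^\sharp$ is a cofibration of marked simplicial sets, and the Cartesian-model-category enrichment of $\Set_\Delta^+$ makes the restriction $\on{Fun}(\Tw(Y)^\sharp, \Tw(\DD)^\dagger) \to \on{Fun}(\Tw(X)^\sharp, \Tw(\DD)^\dagger)$ a Joyal fibration, from which the statement on fibers follows by pullback. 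The main technical obstacle is the left-Quillen property of $\widehat{Q}$, and in particular verifying the lifting condition against $\{0\} \hookrightarrow J$ needed for the isomorphism-lifting half of being a categorical fibration; I would expect to dispatch this either by a direct scaled-anodyne verification in the spirit of \autoref{lem:fibstep1}--\autoref{lem:fibstep2} or by exploiting the fact that $\Tw(\CC)$ is already the total space of a Cartesian fibration over $\C \times \C^{\op}$.
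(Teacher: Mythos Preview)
Your argument for the $\scr{L}$-map is correct and essentially identical to the paper's: both use that $\Tw(\alpha)$ is a monomorphism of marked simplicial sets and that $\Tw(\DD)^\dagger \to \D \times \D^{\op}$ is a Cartesian fibration to solve the relevant marked-anodyne lifting problems, then invoke \cite[Cor.~2.4.6.5]{HTT}.

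For the $\Tw$-map, your overall strategy --- pass to adjoint extension problems in $\scsSet$ and invoke a pushout-product --- is also the paper's. The gap is in your justification of the left Quillen property of $\widehat{Q}$. The criterion you state, that $\widehat{Q}$ preserves monomorphisms and its right adjoint $Q^*$ sends fibrant objects to fibrant objects, does \emph{not} suffice to conclude that $(\widehat{Q}, Q^*)$ is a Quillen adjunction: one needs $Q^*$ to preserve fibrations (or at least fibrations between fibrant objects), not merely fibrant objects. You essentially acknowledge this in your final paragraph, but your proposed fixes remain vague.

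The paper sidesteps the issue by never attempting to show $\widehat{Q}$ left Quillen against all Joyal trivial cofibrations. Since both source and target of the map in question are quasi-categories (being fibers of Cartesian fibrations), \cite[Cor.~2.4.6.5]{HTT} reduces the categorical-fibration claim to lifting against the inner horn inclusions $(\Lambda^n_i)^\flat \to (\Delta^n)^\flat$ and the marked inclusion $(\Delta^0)^\sharp \to (\Delta^1)^\sharp$. The adjoint extension problems are then pushout-products of the cofibration $X \to Y$ with the specific maps $(K^n_i)_\dagger \to Q(n)$ and $\on{Sp}^3 \to \Delta^3_\sharp$, both of which were already shown to be scaled anodyne in the course of proving \autoref{thm:TwFibrant}; one concludes by \cite[Prop.~3.1.8]{LurieGoodwillie}. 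This is exactly your ``direct scaled-anodyne verification'' suggestion made concrete: the point is to target only those generating morphisms for which the scaled-anodyne input is already in hand, rather than arbitrary Joyal trivial cofibrations.
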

\begin{proof}
	Let us observe that due to  \autoref{thm:TwFibrant} and \cite[Cor. 2.4.6.5]{HTT}, to check that the first map is a Joyal fibration it will suffice to solve the lifting problems
	\[
	\begin{tikzcd}[ampersand replacement=\&]
	(\Lambda^n_i )^{\flat}\arrow[r] \arrow[d] \& \Tw(\DD^Y)_{(F,G)} \arrow[d] \\
	(\Delta^n)^{\flat} \arrow[r] \& \Tw(\mathbb{D}^X)_{(\alpha^*F,\alpha^* G)}
	\end{tikzcd}
	\begin{tikzcd}[ampersand replacement=\&]
	(\Delta^0)^{\sharp}\arrow[r] \arrow[d] \& \Tw(\DD^Y)_{(F,G)} \arrow[d] \\
	(\Delta^1)^{\sharp} \arrow[r] \& \Tw(\mathbb{D}^X)_{(\alpha^*F,\alpha^* G)}
	\end{tikzcd}
	\]
	with $n\geq 2$ and $0<i<n$. These lifting problems can be easily seen to be equivalent to their adjoint problems (where we are using the notation of the proof of \autoref{thm:TwFibrant})
	\[
	\begin{tikzcd}[ampersand replacement=\&]
	(K^n_i )_{\dagger}\times Y \coprod\limits_{(K^n_i)_{\dagger} \times X} Q(n)\times X  \arrow[r] \arrow[d] \& \DD  \\
	Q(n)\times Y \arrow[ur,dotted]
	\end{tikzcd}
	\enspace  \enspace \enspace \enspace \enspace
	\begin{tikzcd}[ampersand replacement=\&]
	\on{Sp}^3 \times Y \coprod\limits_{\on{Sp}^3 \times X}\Delta^3_{\sharp} \times X  \arrow[r] \arrow[d] \& \DD  \\
	\Delta^3_{\sharp} \times Y \arrow[ur,dotted]
	\end{tikzcd}
	\]
	which admit solution in virtue of \cite[Proposition 3.1.8]{{LurieGoodwillie}}. The proof for the other functor is almost analogous. First we note that the induced map $\Tw(X)\to \Tw(Y)$ is a cofibration of marked simplicial sets. Let $A^{\diamond} \to B^{\diamond}$ be a marked anodyne morphism, then using \cite[Prop. 3.1.2.3]{HTT} we see that lifting problems of the form
	\[
	\begin{tikzcd}[ampersand replacement=\&]
	\Tw(X)^{\sharp}\times B^{\diamond}  \coprod\limits_{\Tw(X)^{\sharp}\times A^{\diamond} } \Tw(Y)^{\sharp}\times A^{\diamond}  \arrow[d] \arrow[r] \& \Tw(\DD)^{\dagger} \arrow[d] \\
	\Tw(Y)^{\sharp}\times B^{\diamond} \arrow[ur,dotted] \arrow[r] \& \D \times \D^{\op}
	\end{tikzcd}
	\]
	admit a solution. The claim follows immediately from this fact coupled with \cite[Cor. 2.4.6.5]{HTT}.
\end{proof}

	\begin{proposition}\label{prop:holim}
	Let $\func{P_i: \mathcal{O}_i \to  \on{Set}_{\Delta}}$ with $i=1,2$, be two diagrams of simplicial sets such that
	\begin{itemize}
		\item[1)] $P_1$ is a cotower diagram such that for every $\ell \to k$ in $D_1$ the induced morphism $P_1(\ell)\to P_2(k)$ is a cofibration.
		\item[2)] $P_2$ is a pushout diagram such there exists a morphism $a \to b$ such that $P_2(a)\to P_2(b)$ is a cofibration.
	\end{itemize}
	Denote by $X_i$ the colimit of $P_i$ and by $\set{\beta_j}_{j \in \mathcal{O}_i}$ the canonical cone of $X_i$. Given $F,G \in \DD^{X_i}$ then it follows that we have equivalences of $\infty$-categories
	\[
	\Tw (\DD^X)_{(F,G)} \simeq  \holim_{j\in\mathcal{O}_i^{\op}} \Tw\left(\DD^{P_i(j)}\right)_{(\beta_j^{*}F,\beta_j^{*}G)}, \enspace \enspace \scr{L}_{(F,G)}\simeq \holim_{j\in\mathcal{O}_i^{\op}}\scr{L}_{(\beta_j^{*}F,\beta_j^{*}G)}
	\]
\end{proposition}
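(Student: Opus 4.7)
The plan is to upgrade, for each of the two functors, a strict equality between the fiber at $X_i$ and the strict limit of fibers at the $P_i(j)$'s into a homotopy-limit equivalence, using the Reedy fibrancy supplied by \autoref{prop:cof}. An $n$-simplex of $\Tw(\DD^{X_i})_{(F,G)}$ corresponds, via the adjunction $(\mathblank)\times X_i \dashv \DD^{(\mathblank)}$, to a scaled map $Q(n)\times X_i \to \DD$ whose restrictions to $\Delta^n\times X_i$ and $(\Delta^n)^{\op}\times X_i$ are prescribed by $F$ and $G$. Since the product $Q(n)\times (\mathblank)$ preserves colimits in $\Set_{\Delta}$ and the boundary data are themselves assembled from the restrictions $\beta_j^{*}F$ and $\beta_j^{*}G$, the decomposition $X_i=\colim_j P_i(j)$ yields a strict identification
\[
\Tw(\DD^{X_i})_{(F,G)} \cong \lim_{j \in \mathcal{O}_i^{\op}} \Tw\bigl(\DD^{P_i(j)}\bigr)_{(\beta_j^{*}F,\beta_j^{*}G)}.
\]
For $\scr{L}$, the analogous identification follows by the same formal argument after noting that $\Tw(X)_n = X_{2n+1}$, so $\Tw(\mathblank)$ commutes with colimits of simplicial sets; thus an $n$-simplex of $\scr{L}_{(F,G)}$ can be written as a colimit-compatible family of maps $\Tw(P_i(j))\times \Delta^n \to \Tw(\DD)$ over $\D\times \D^{\op}$ with the required boundary behaviour.

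With the strict identifications in hand, we invoke \autoref{prop:cof} to pass from strict to homotopy limits. In case~(1), $\mathcal{O}_1^{\op}$ is a tower whose transition maps in both diagrams are Joyal fibrations by \autoref{prop:cof}; the homotopy limit of such a tower of fibrations coincides with its strict limit. In case~(2), $\mathcal{O}_2^{\op}$ is a cospan, and the hypothesis that $P_2(a)\to P_2(b)$ is a cofibration for some $a\to b$ ensures, once again by \autoref{prop:cof}, that the corresponding leg is a Joyal fibration, so the strict pullback already models the homotopy pullback.

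The main subtlety is keeping track of how the boundary conditions defining each fiber interact with the colimit decomposition of $X_i$: these conditions involve not just $F$ and $G$ but their restrictions along every $\beta_j$, and one must verify that they assemble on the nose into the fiber over $(F,G)$ at $X_i$. Once this diagram-chase using the universal property of $X_i = \colim_j P_i(j)$ is carried out, the remainder is a formal consequence of \autoref{prop:cof} together with standard facts about homotopy limits of towers and cospans in the Joyal model structure.
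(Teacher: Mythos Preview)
Your proof is correct and follows essentially the same approach as the paper's: first identify the fiber over $(F,G)$ at $X_i$ with the strict limit of the fibers at the $P_i(j)$, then use the fibrancy supplied by \autoref{prop:cof} to upgrade the strict limit to a homotopy limit. The paper compresses the second step into the single phrase ``both diagrams are injectively fibrant,'' whereas you spell out the tower-of-fibrations and one-leg-fibration conditions explicitly.
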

\begin{proof}
	Observe that the functors $\Tw(\DD^{\mathblank})$ and $\mathcal{L}_{(\mathblank)}$ preserve the ordinary limits of shape $\mathcal{O}_i$. Since taking fibers commutes with limits we observe that it is enough to show that both diagrams are injectively fibrant. This follows immediately from our hypothesis and \autoref{prop:cof}.
\end{proof}

\begin{lemma}\label{lem:innercofinal}
	Let $\iota:\Lambda^{n}_i \to \Delta^n$ be an inner horn inclusion. Then for every $F,G \in \D^{\Delta^n}$ we have equivalences of $\infty$-categories
	\[
	\func{\Tw(\DD^{\Delta^n})_{(F,G)} \to[\simeq] \Tw(\DD^{\Lambda^n_i})_{(\iota^*F,\iota^*G)},} \enspace \enspace \func{\scr{L}_{(F,G)} \to[\simeq] \scr{L}_{(\iota^*F,\iota^*G)}.}
	\]
\end{lemma}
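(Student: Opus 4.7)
The plan is to deduce both equivalences from the fact that the restriction $\iota^{*}:\DD^{\Delta^n}\to \DD^{\Lambda^n_i}$ is a trivial fibration of fibrant scaled simplicial sets. Indeed, $\iota$ is inner anodyne and hence a scaled trivial cofibration (with the maximal scaling on both source and target), while $\DD$ is bicategorically fibrant; by adjunction $\iota^{*}$ is therefore a trivial fibration. In particular, the induced map on underlying $\infty$-categories $\D^{\Delta^n}\times (\D^{\Delta^n})^{\op}\to \D^{\Lambda^n_i}\times (\D^{\Lambda^n_i})^{\op}$ is a categorical equivalence sending $(F,G)$ to $(\iota^{*}F,\iota^{*}G)$.

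For the first map, \autoref{thm:TwClassifiesMap} identifies $\Tw(\DD^{\Delta^n})$ and $\Tw(\DD^{\Lambda^n_i})$ with the Cartesian fibrations classifying the enhanced mapping functors of $\DD^{\Delta^n}$ and $\DD^{\Lambda^n_i}$ respectively. Equivalent $\infty$-bicategories have equivalent enhanced mapping functors, so the induced map $\Tw(\DD^{\Delta^n})\to \Tw(\DD^{\Lambda^n_i})$ is an equivalence of Cartesian fibrations lying over the categorical equivalence of bases above. Passing to the fiber over $(F,G)$ --- which corresponds to $(\iota^{*}F,\iota^{*}G)$ --- then yields the first claimed equivalence.

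For the second map, one identifies the fiber $\scr{L}_{(F,G)}$ with the limit $\lim\limits_{\Tw(\Delta^n)^{\op}} N_{(F,G)}$ via \cite[Cor. 3.3.3.2]{HTT}, and similarly for $\scr{L}_{(\iota^{*}F,\iota^{*}G)}$; note that $N_{(\iota^{*}F,\iota^{*}G)}$ is the precomposition of $N_{(F,G)}$ with $\Tw(\iota)^{\op}$. Since the classical twisted arrow construction preserves categorical equivalences of simplicial sets, $\Tw(\iota):\Tw(\Lambda^n_i)\to \Tw(\Delta^n)$ is itself a categorical equivalence; in particular $\Tw(\iota)^{\op}$ is coinitial, so the canonical comparison of limits is an equivalence.

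The main obstacle is pinning down the two invariance statements precisely. A hands-on alternative, closer in spirit to \autoref{prop:cof}, is to strengthen both restriction maps to \emph{trivial} Joyal fibrations by verifying the right lifting property against boundary inclusions $\partial\Delta^m\to\Delta^m$. For the first map, the adjoint extension problem has as its left-hand map the pushout-product of the cofibration $\partial Q(m)\to Q(m)$ with the scaled anodyne inclusion $\Lambda^n_i\to \Delta^n$ (under the maximal scaling); this pushout-product is itself scaled anodyne by \cite[Prop. 3.1.8]{LurieGoodwillie}, so the extension exists by fibrancy of $\DD$. An entirely parallel argument handles the second map, where the role of the trivial cofibration is played by $\Tw(\Lambda^n_i)^{\sharp}\to \Tw(\Delta^n)^{\sharp}$ as marked simplicial sets over $\D\times \D^{\op}$, its marked-anodyne status ultimately following from the invariance of $\Tw$ under $\iota$.
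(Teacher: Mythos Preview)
Your argument is essentially correct, and the overall shape matches the paper's proof: both start from the observation that $\iota^{*}:\DD^{\Delta^n}\to \DD^{\Lambda^n_i}$ is a trivial fibration of scaled simplicial sets, and both reduce the $\scr{L}$ statement to an invariance property of $\Tw(\iota):\Tw(\Lambda^n_i)\to\Tw(\Delta^n)$.

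There are, however, two genuine differences worth noting. For the $\Tw$ part, you invoke \autoref{thm:TwClassifiesMap} to identify fibers with mapping categories and then appeal to invariance of mapping categories under bicategorical equivalence. The paper's argument is more elementary: once $\iota^{*}$ is a trivial fibration, the map $\Tw(\DD^{\Delta^n})\to\Tw(\DD^{\Lambda^n_i})$ is itself a trivial fibration (boundary lifting adjoint-transposes to lifting against the cofibration $\partial Q(m)\to Q(m)$), and the fiber statement follows directly. Your ``hands-on alternative'' is precisely this argument; it is in fact the cleaner route, and avoids loading the classification theorem into what is really a formal step.

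For the $\scr{L}$ part, you assert that the classical twisted arrow construction preserves categorical equivalences of arbitrary simplicial sets, and deduce coinitiality from that. This is true, but it is not entirely free: $\Lambda^n_i$ is not a quasi-category, so the usual ``$\Tw$ classifies the mapping space functor'' argument does not apply on the nose, and one must establish invariance separately. The paper takes the more concrete route of checking cofinality of $\Tw(\Lambda^n_i)\to\Tw(\Delta^n)$ directly via Quillen's Theorem~A, which is a finite poset computation. Both approaches leave the key verification to the reader, but the Theorem~A check is arguably the more self-contained exercise. Your hands-on alternative for this half again defers to the same invariance-of-$\Tw$ claim, so it does not actually circumvent the issue.
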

\begin{proof}
	First, let us observe that $\DD^{\Delta^n} \to  \DD^{\Lambda_i^n}$ is a trivial fibration in the scaled model structure. After noticing this, the result follows immediately for $\Tw$. To show the claim for the second functor we just need to show that the inclusion $\iota:\Tw(\Lambda^n_i) \to \Tw(\Delta^n)$ is cofinal. Then the result will follow from the fact that restriction along $\iota^{\op}$ preserves limits. We left as an exercise to the reader this last check, that follows easily from Quillen's Theorem A. 
\end{proof}

\begin{proposition}\label{prop:reduction}
	Suppose the map $\Theta_X$ in \autoref{thm:nat} is an equivalence of Cartesian fibrations for $X=\Delta^n$ with $n=0,1$. Then for every $X \in \on{Set}_{\Delta}$ the map $\Theta_X$ is an equivalence of Cartesian fibrations.
\end{proposition}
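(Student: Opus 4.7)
The plan is a standard dévissage carried out as a nested induction. Let $\scr{P} \subseteq \on{Set}_\Delta$ denote the class of simplicial sets $X$ for which $\Theta_X$ is a fiberwise equivalence over $\D^X \times (\D^X)^{\op}$. The goal is $\scr{P} = \on{Set}_\Delta$, seeded by the hypothesized $\Delta^0, \Delta^1 \in \scr{P}$. The first observation is that \autoref{prop:cof} and \autoref{prop:holim} combine to yield two closure properties: $\scr{P}$ is closed under transfinite compositions of cofibrations, and under pushouts along cofibrations. Indeed, both $\Tw(\DD^{(-)})$ and $\scr{L}_{(-)}$ send the colimit diagrams of \autoref{prop:holim} to homotopy limits of Joyal fibrations, and since $\Theta$ is a natural transformation between these functors, a pointwise equivalence descends to an equivalence of the resulting homotopy limits.

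The outer layer of the induction is on the dimension of $X$. Writing any simplicial set as the transfinite composite of its skeletal filtration, and each transition $\on{sk}_{n-1}X \hookrightarrow \on{sk}_n X$ as a pushout along a coproduct of boundary inclusions $\partial \Delta^n \hookrightarrow \Delta^n$, the two closure properties reduce the problem to establishing $\Delta^n \in \scr{P}$ for every $n$; both $\partial \Delta^n$ and the lower skeleta of $X$ are handled by the inductive hypothesis on dimension, and the coproducts themselves are dealt with by iterating the binary pushout closure along $\emptyset$.

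The inner layer of the induction is on $n$. For $n \geq 2$, fix an inner horn $\iota\colon \Lambda^n_i \hookrightarrow \Delta^n$. \autoref{lem:innercofinal} furnishes, for each pair $F, G \colon \Delta^n \to \DD$, a commutative square
\[
\begin{tikzcd}
\Tw(\DD^{\Delta^n})_{(F,G)} \arrow[r,"\Theta_{\Delta^n}"] \arrow[d,"\simeq"'] & \scr{L}_{(F,G)} \arrow[d,"\simeq"] \\
\Tw(\DD^{\Lambda^n_i})_{(\iota^*F,\iota^*G)} \arrow[r,"\Theta_{\Lambda^n_i}"'] & \scr{L}_{(\iota^*F,\iota^*G)}
\end{tikzcd}
\]
whose vertical arrows are equivalences. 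Because $\iota$ is inner anodyne and $\DD$ is fibrant, every pair of functors on $\Lambda^n_i$ arises as the restriction of a pair on $\Delta^n$; hence $\Delta^n \in \scr{P}$ if and only if $\Lambda^n_i \in \scr{P}$. Since $\Lambda^n_i$ has dimension $n-1$, the outer induction (applied under the inductive hypothesis that all $\Delta^k$ with $k \leq n-1$ already lie in $\scr{P}$) places $\Lambda^n_i$ in $\scr{P}$ and completes the inductive step. The only genuinely subtle point in the argument is precisely this extension property of functors against inner anodyne maps, together with keeping the two layers of induction synchronized; all other work is bookkeeping around the structural results already established.
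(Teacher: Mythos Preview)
Your proposal is correct and follows essentially the same approach as the paper: reduce to simplices via the skeletal filtration using \autoref{prop:holim}, then induct on $n$ by passing to an inner horn via \autoref{lem:innercofinal}. One small remark: the extension property you flag as ``the only genuinely subtle point'' is not actually needed, since you only use the implication $\Lambda^n_i \in \scr{P} \Rightarrow \Delta^n \in \scr{P}$, which follows directly from the square and 2-out-of-3 for arbitrary $(F,G)$ on $\Delta^n$.
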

\begin{proof}
	We will say that a simplicial set $X$ satisfies the property $(\divideontimes)$ if $\Theta_X$ is an equivalence of Cartesian fibrations. First we will assume that the simplicial sets $\Delta^n$ with $n\geq 0$ satisfy $(\divideontimes)$. 
	
	As a direct consequence of \autoref{prop:holim} 2), we deduce that boundaries $\partial \Delta^n$ fullfil condition $(\divideontimes)$ for $n\geq 0$. Let $X$ be an arbitrary simplicial set. We claim that given $n\geq 0$ the $n$-skeleton $\on{sk}_n(X)$ satisfies $(\divideontimes)$. It is clear that the claim holds for $\on{sk}_0(X)$ since it is just a disjoint union of points. Suppose that the claim holds for $\on{sk}_{l-1}(X)$  and let $I$ be the set of non degenerate simplices contained in $\on{sk}_{l}(X)\setminus \on{sk}_{l-1}(X)$. Given $i \in I$ we can attach that non-degenerate simplex via a pushout square
	\[
	\begin{tikzcd} 		
	\partial \Delta^{l} \arrow[r] \arrow[d] & \Delta^{l} \arrow[d] \\ 		
	\on{sk}_{l-1}(X) \arrow[r] & P \arrow[ul, phantom, "\ulcorner", very near start] \end{tikzcd}
	\]
	\autoref{prop:holim} implies that $\Theta_{P}$ is an equivalence.  Now let us pick a linear order on $I$ and attach one by one all the simplices in $I$. We can then produce a functor
	\[
	\func{P: I \to \on{Set}_{\Delta}}, \enspace \text{such that }\colim_{I}P \isom \on{sk}_{l}(X).
	\]
	which is an instance of \autoref{prop:holim} 1) and therefore the inductive step is proved. The same proposition now applied to $X \isom \colim_\N \on{sk}_n(X)$ finally shows that $\Theta_X$ is an equivalence of Cartesian fibrations provided $\Theta_{\Delta^n}$ is an equivalence for $n\geq 0$.
	
	We will use again induction to show that $\Theta_{\Delta^n}$ is an equivalence for $n\geq 0$. Our ground cases are $n=0,1$. Now assume the claim holds for $(n-1)\geq 1$ and pick an inner horn inclusion $\iota: \Tw(\Lambda^n_i) \to \Delta$. Then we have a commutative diagram
	\[
	\begin{tikzcd}[ampersand replacement=\&]
	\Tw(\DD^{\Delta^n})_{(F,G)} \arrow[d,"\simeq"] \arrow[r] \& \left(\scr{L}_{\Delta^n}\right)_{(F,G)} \arrow[d,"\simeq"] \\
	\Tw(\DD^{\Lambda^n_i})_{(\iota^{*}F,\iota^{*}G)} \arrow[r,"\simeq"] \& \left(\scr{L}_{\Lambda^n_i}\right)_{(\iota^{*}F,\iota^{*}G)} 
	\end{tikzcd}
	\]
	where the vertical morphisms are equivalences due to \autoref{lem:innercofinal}. It is easy to see that the bottom horizontal morphism is an equivalence due to the induction hypothesis. The result follows from 2-out-of-3.
\end{proof}

At this point we have made a drastic reduction in complexity and we are left to show that the object $\Delta^1$ satisfies $(\divideontimes)$, the case of $\Delta^0$ being obvious. We will tackle this last case by a direct computational approach. Before diving into the proof of \autoref{thm:nat} we will take a small detour to analyze the relevant combinatorics. Throughout the rest of this section we will use the coordinates $a \leq b$ for $\Delta^1$ instead of the standard $0\leq 1$ notation. In a similar fashion, we the coordinates of $\Tw(\Delta^1)$  by $ab \to aa$, $ab \to bb$.

\begin{definition}
	We define a cosimplicial object 
	\[
	\func{\scr{R}: \Delta \to \on{Set}^{\on{sc}}_{\Delta}; [n] \mapsto (\scr{R}(n),T)}, 
	\]
	\[
	\scr{R}(n)=(\Delta^n \times \Delta^1)\star (\Delta^n \times \Delta^1)^{\op}\coprod_{\Delta^{2n+1}} (\Delta^n \times \Delta^1)\star (\Delta^n \times \Delta^1)^{\op}
	\]
	We describe the scaling using the notation of \autoref{rmk:RnNotation}. $T$ is the scaling which is (1) identical on the two summands and (2) such that the non-degenerate thin 2-simplices of the first summand $(\Delta^n \times \Delta^1)\star (\Delta^n \times \Delta^1)^{\op}$ are those $\sigma$ such that
	\begin{itemize}
		\item $\sigma$ factors through either $(\Delta^n\times \Delta^1)$ or $(\Delta^n\times \Delta^1)^\op$.
		\item $i_p<j_q<k_r$ is a simplex in $\Delta^n\times \Delta^1$, and $\sigma=(i_p<j_q<\overline{k_r})$. 
		\item $k_r<j_q<i_p$ is a simplex in $\Delta^n\times \Delta^1$ and $\sigma=(i_p<\overline{j_q}<\overline{k_r})$.
		\item $i\leq j\leq k$ is a simplex of $\Delta^n$ and 
		\begin{itemize}
			\item $\sigma=i_{ab} <j_{aa}<\overline{k_a}$;
			\item $\sigma=k_{ab}<\overline{j_{aa}}<\overline{i_{ab}}$; 
			\item $\sigma=i_{aa}<j_{aa}<\overline{k_{ab}}$;
			\item $\sigma=k_{ab}<\overline{j_{aa}}<\overline{i_{aa}}$; 
			\item $\sigma=i_{ab}<j_{ab}<\overline{k_{aa}}$; or
			\item $\sigma=k_{aa}<\overline{j_{ab}}<\overline{i_{ab}}$. 
		\end{itemize} 
	\end{itemize}
\end{definition}

\begin{remark}\label{rmk:RnNotation}
	We can describe the underlying simplicial set of $\scr{R}(n)$ as the nerve of a poset $R_n$ as follows
	\begin{itemize}
		\item The set of objects is given by symbols $\ell_{\epsilon}$ where $\ell \in [n]$ and $\epsilon \in \set{ab,aa,bb}$ together with their formal duals $\overline{\ell}_{\epsilon}$.
		\item We declare $\ell_{ab} \leq k_{\epsilon}$ where $\epsilon \in \set{ab,aa,bb}$ if and only if $\ell \leq k$. Dually we declare $\overline{\ell}_{ab} \leq \overline{k}_{\epsilon}$ if and only if $k \leq \ell$. Finally we declare $\ell_{\epsilon} < \overline{\ell}_{\epsilon}$. The ordering on $R_n$ is the minimal one generated by the inequalities above.
	\end{itemize} 
	We provide graphical representations of the posets for $n\leq 2$:
	\begin{figure}[h]
		\centering
		\begin{tabular}{l l|l}
			$R_0$ & & 
			\begin{tikzpicture}[baseline={([yshift=-.7ex]current bounding box.center)},mybox/.style={ inner sep=15pt}]
			\node[mybox] (box){%
				\begin{tikzcd}
				\overline{0}_{aa} \arrow[rr] &  & \overline{0}_{ab}                       &  & \overline{0}_{bb} \arrow[ll] \\
				&  &                                         &  &                              \\
				0_{aa} \arrow[uu]            &  & 0_{ab} \arrow[rr] \arrow[ll] \arrow[uu] &  & 0_{bb} \arrow[uu]           
				\end{tikzcd}
			};
			\end{tikzpicture}\\\hline
			$R_1$ & & 
			\begin{tikzpicture}[baseline={([yshift=-.7ex]current bounding box.center)},mybox/.style={ inner sep=15pt}]
			\node[mybox,scale=0.8] (box){%
				\begin{tikzcd}
				&  & \overline{0}_{aa} \arrow[rrrr]            &  &                                                         &  & \overline{0}_{ab}            &  &                               &  & \overline{0}_{bb} \arrow[llll]            \\
				0_{aa} \arrow[rru] \arrow[dd] &  &                                           &  & 0_{ab} \arrow[dd] \arrow[rrrr] \arrow[rru] \arrow[llll] &  &                              &  & 0_{bb} \arrow[rru] \arrow[dd] &  &                                           \\
				&  & \overline{1}_{aa} \arrow[rrrr] \arrow[uu] &  &                                                         &  & \overline{1}_{ab} \arrow[uu] &  &                               &  & \overline{1}_{bb} \arrow[uu] \arrow[llll] \\
				1_{aa} \arrow[rru]            &  &                                           &  & 1_{ab} \arrow[rru] \arrow[rrrr] \arrow[llll]            &  &                              &  & 1_{bb} \arrow[rru]            &  &                                          
				\end{tikzcd}
			};
			\end{tikzpicture}\\\hline
			$R_2$ & & 
			\begin{tikzpicture}[baseline={([yshift=-.7ex]current bounding box.center)},mybox/.style={ inner sep=15pt}]
			\node[mybox,scale=0.8] (box){%
				\begin{tikzcd}
				&  & \overline{0}_{aa} \arrow[rrrr]            &  &                                                         &  & \overline{0}_{ab}            &  &                               &  & \overline{0}_{bb} \arrow[llll]            \\
				0_{aa} \arrow[rru] \arrow[dd] &  &                                           &  & 0_{ab} \arrow[dd] \arrow[rrrr] \arrow[rru] \arrow[llll] &  &                              &  & 0_{bb} \arrow[rru] \arrow[dd] &  &                                           \\
				&  & \overline{1}_{aa} \arrow[rrrr] \arrow[uu] &  &                                                         &  & \overline{1}_{ab} \arrow[uu] &  &                               &  & \overline{1}_{bb} \arrow[uu] \arrow[llll] \\
				1_{aa} \arrow[rru] \arrow[dd] &  &                                           &  & 1_{ab} \arrow[rru] \arrow[rrrr] \arrow[llll] \arrow[dd] &  &                              &  & 1_{bb} \arrow[rru] \arrow[dd] &  &                                           \\
				&  & \overline{2}_{aa} \arrow[rrrr] \arrow[uu] &  &                                                         &  & \overline{2}_{ab} \arrow[uu] &  &                               &  & \overline{2}_{bb} \arrow[llll] \arrow[uu] \\
				2_{aa} \arrow[rru]            &  &                                           &  & 2_{ab} \arrow[llll] \arrow[rru] \arrow[rrrr]            &  &                              &  & 2_{bb} \arrow[rru]            &  &                                          
				\end{tikzcd}
			};
			\end{tikzpicture}\\\hline
		\end{tabular}
		\caption{The posets $R_n$ for $n \le 2$.}
		\label{fig:posets}
	\end{figure}
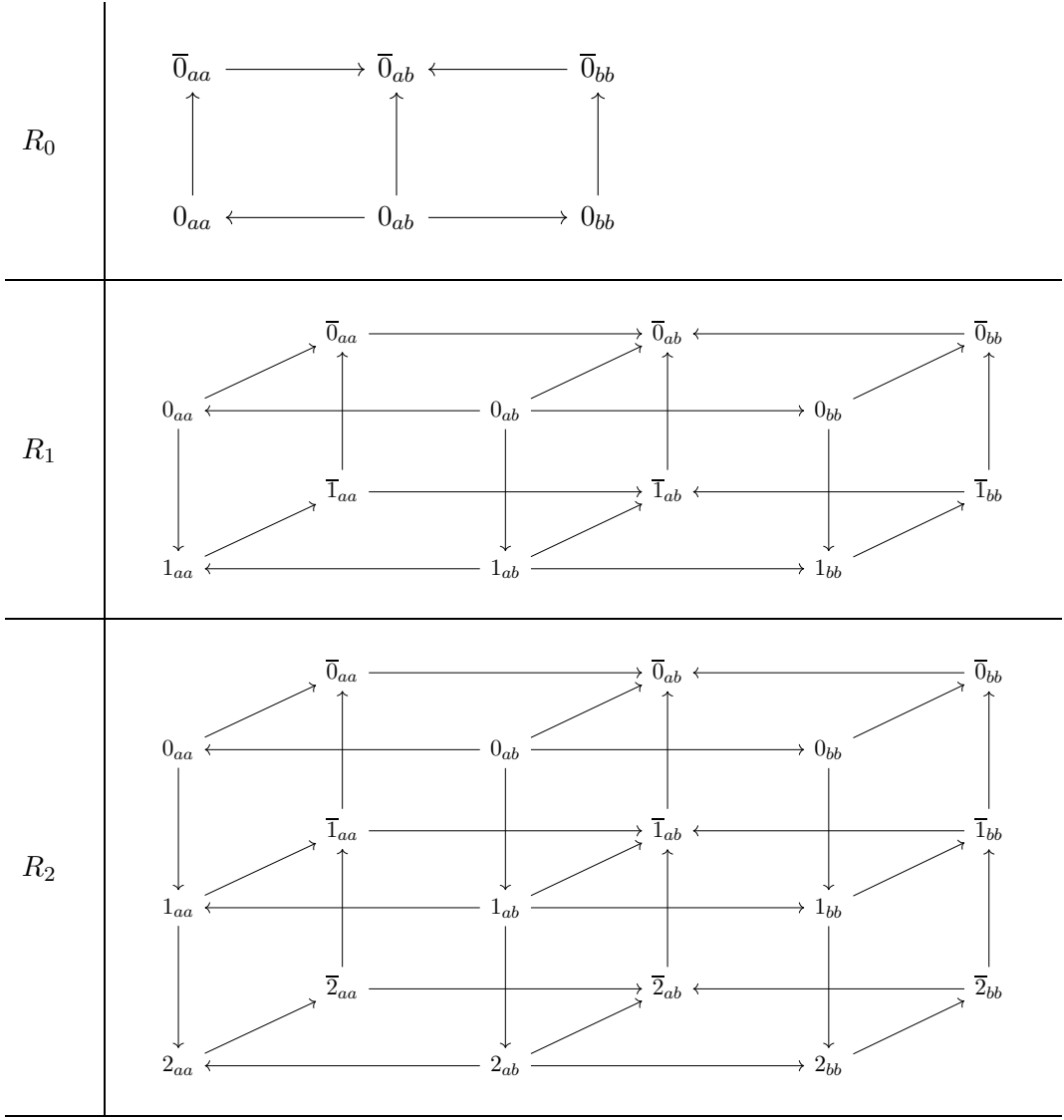
\end{remark}

\begin{remark}
	We observe that the posets above come equipped with an isomorphism $(R_n)^{\op}\isom R_n$ given by applying the “bar operator” $\overline{(\mathblank)}$. It is worth pointing out that our scaling is symmetric with respect to this duality.
\end{remark}

\begin{definition}
	We define a cosimplicial object
	\[
	\func{\scr{Q}:\Delta \to \on{Set}^{\on{sc}}_{\Delta}; [n] \mapsto Q(\Delta^n \times \Tw(\Delta^1)^{\sharp}) }
	\]
	where $Q$ was already introduced in \autoref{defn:CosimpQ} and the functoriality is the obvious one. Since $Q$ preserves colimits we see that $\scr{Q}(n)$ splits into 
	\[
	Q(\Delta^n \times \Tw(\Delta^1))  \isom Q(\Delta^n \times \Delta^1) \coprod\limits_{Q(\Delta^n)}Q(\Delta^n \times \Delta^1).
	\]
\end{definition}

\begin{remark} \label{rmk:Qsimps}
	Recall that our definitions imply that a map $\scr{Q}(n) \to \DD$ corresponds precisely to a functor $\func{ \Delta^n \times \Tw(\Delta^1) \to \Tw (\DD).}$ We see that a simplex in $\scr{L}_{(F,G)}$ is given by a map $\scr{Q}(n)\to \DD$ satisfying the obvious conditions after restriction to  $\Delta^n \times \Tw(\Delta^1), (\Delta^n \times \Tw(\Delta^1))^{\op} \subset Q(n)$.
\end{remark}

\begin{definition}
	Let $n\geq 0$ and observe that $\scr{R}(n)$ fits into a cocone for the colimit defining $\scr{Q}(n)$. Then the induced cofibrations $\func{\epsilon_n: \scr{Q}(n) \to \scr{R}(n)}$, assemble into map of cosimplicial objects $\func{\xi: \scr{Q}\nat \scr{R} }$. 
\end{definition}

\begin{definition}
	We define a cosimplicial object
	\[
	\func{\scr{T}: \Delta \to \on{Set}_{\Delta}^{\on{sc}}; [n] \mapsto Q(n)\times \Delta^1.}
	\]
\end{definition}

\begin{remark}
	Analogously to \autoref{rmk:Qsimps}, we can identify a simplex $\Delta^n \to \Tw (\DD^{\Delta^1})_{(F,G)}$ with a map $\scr{T}(n) \to \DD$ such that the restrictions to $\Delta^n \times \Delta^1$ and $(\Delta^n)^{\op}\times \Delta^1$ are constant on $F$ and $G^{\op}$ respectively.
\end{remark}

\begin{definition}
	Define a map of posets
	\[
	\func{\mu_n: \scr{T}(n) \to \scr{R}(n);(\ell,a) \mapsto \ell_{ab}; (\overline{\ell},a) \mapsto \overline{\ell}_{aa} ;(\ell,b) \mapsto \ell_{bb} ; (\overline{\ell},b)\mapsto \overline{\ell}_{ab} }
	\] 
	Then the maps $\mu_n$ assemble into a map of cosimplicial objects $\func{\mu: \scr{T} \nat \scr{R}}$.
\end{definition}

\begin{remark}
	At this juncture it is worth noting that the scaling on $\scr{R}(n)$ is the minimal scaling such that $\func{\xi:\scr{Q}\nat \scr{R}}$ and $\func{\mu:\scr{T}\nat \scr{R}}$ respect the scaling, and such that the scaling on $\scr{R}(n)$ has the two symmetries previously mentioned. 
\end{remark}

Let us take a small break to put the previous definitions into perspective. We have defined three cosimplicial objects $\scr{R},\scr{Q}$ and $\scr{T}$, the last two of which define the simplices of the $\infty$-categories $\Tw(\DD^{\Delta^1})_{(F,G)}$ and $\scr{L}_{(F,G)}$ respectively. The proof of \autoref{thm:nat} will rely on identifying $\scr{R}$ as an interpolating cosimplicial object between $\scr{Q}$ and $\scr{T}$. In the next proposition, we will show an equivalence of cosimplicial objects between $\scr{Q}$ and $\scr{R}$ thus providing a key technical ingredient for the proof of the main theorem. Readers unwilling to join us for this combinatorial ride can safely skip the next proof.

\begin{proposition}\label{prop:scaledanodyne}
	The map of cosimplicial objects $\func{\xi: \scr{Q} \nat \scr{R}}$ is a levelwise trivial cofibration in the scaled model structure.
\end{proposition}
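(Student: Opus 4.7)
The map $\xi_n$ is evidently a monomorphism, so it is a cofibration in the bicategorical model structure. The task is therefore to produce a filtration of the inclusion $\xi_n:\scr{Q}(n)\to\scr{R}(n)$ whose successive stages are scaled anodyne extensions.

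\emph{Reduction to a single summand.} Since each of $\scr{Q}(n)$ and $\scr{R}(n)$ is formed as a pushout of two copies of something along $Q(\Delta^n)\cong\Delta^{2n+1}$, and since pushouts of trivial cofibrations between scaled simplicial sets are trivial cofibrations, it suffices to show that the canonical comparison
\[
\func{\psi_n: Q(\Delta^n\times\Delta^1)\to (\Delta^n\times\Delta^1)\star(\Delta^n\times\Delta^1)^{\op}}
\]
is a trivial cofibration when the target is equipped with the scaling induced by that of a single summand of $\scr{R}(n)$. (One does this twice: once for the $\{ab,aa\}$-summand and once for $\{ab,bb\}$, but the argument is identical in both cases; indeed, relabeling reduces one to the other.)

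\emph{Identification of missing simplices.} Using the colimit definition of $Q$, one checks that an edge of $Q(\Delta^n\times\Delta^1)$ from a vertex $y_1$ in $\Delta^n\times\Delta^1$ to a vertex $\overline{y_2}$ in its opposite exists if and only if $y_1$ and $y_2$ lie in a common simplex of $\Delta^n\times\Delta^1$, i.e.\ are comparable in the product poset. More generally, a simplex of the join $(\Delta^n\times\Delta^1)\star(\Delta^n\times\Delta^1)^{\op}$ lies in the image of $\psi_n$ precisely when its non-dual and dual vertex sets together form a chain in $\Delta^n\times\Delta^1$. The missing (``cross'') simplices are therefore those whose non-dual part and dual part contain a pair of incomparable vertices of the product.

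\emph{Filtration and the pivot trick.} I would filter $\scr{R}(n)\setminus\mathrm{im}(\psi_n)$ by adding the missing simplices in lexicographic order on the pair (dimension, number of ``mixing'' transitions between the $aa$ and $ab$ coordinates, counted at the non-dual/dual interface). For each cross simplex $\Delta^X$ being attached, one identifies a suitable dull subset $\mathcal{A}\subset\mathbf{P}(X)$ in the sense of \autoref{def:dull}, with pivot point chosen to be the unique coordinate at which the chain transitions between $ab$ and $aa$ (respectively, between their duals). Precisely here one uses the six families of thin $2$-simplices in the definition of $\scr{R}$ involving $i_{ab},j_{aa},\overline{k_{aa}}$ and so on: these are exactly the neighbourhoods of the pivot required by the hypothesis of \autoref{lem:pivot}, so that the resulting extension $\scr{S}^{\mathcal{A}}_\dagger\to\Delta^X_\dagger$ is scaled anodyne. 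The dual half of the filtration is then handled automatically by \autoref{rmk:scaledanodyneduals}, since the scaling on $\scr{R}(n)$ is $\tau$-symmetric by design.

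\emph{Main obstacle.} The genuine difficulty is organizational: choosing a filtration fine enough that at every stage the attaching map is a disjoint union of inclusions $\scr{S}^{\mathcal{A}}\subset\Delta^X$ for which the scaling verifies the pivot hypothesis, yet coarse enough that intersections of newly attached simplices fall into earlier stages of the filtration. I expect this to follow by ordering cross simplices primarily by cardinality and secondarily by the position of the $ab/aa$-transition, exactly as in the filtration used in the proof of \autoref{lem:fibstep2}. Once the combinatorics of the filtration is set up, each step reduces to a direct application of \autoref{lem:pivot} and the verification is essentially mechanical.
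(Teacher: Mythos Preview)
Your reduction to a single summand via the pushout description is correct and matches the paper. The identification of the image of $\psi_n$ as those simplices whose forward and backward vertex sets jointly form a chain in $\Delta^n\times\Delta^1$ is also right. The problem is the filtration.

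Filtering by increasing dimension cannot work. Already for $n=1$ the smallest simplex not in $Q(\Delta^1\times\Delta^1)$ is the \emph{edge} $1_a\to\overline{0_b}$ (the two incomparable vertices of the square, one on each side of the join). Both of its endpoints lie in the image, so attaching it would require $\partial\Delta^1\hookrightarrow\Delta^1$ to be anodyne. There is no dull subset of $\mathbf{P}(1)$ (a pivot needs $0<i<n$), so \autoref{lem:pivot} does not apply. More generally, once all cross simplices of dimension $<d$ have been added, every face of a $d$-dimensional cross simplex is already present, and you are left filling a full boundary rather than a horn. The secondary ordering by ``number of mixing transitions'' does not rescue this: the obstruction appears at the very first step.

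The paper's organisation is quite different and avoids this entirely. One observes that the target is covered by finitely many \emph{top}-dimensional $(2n{+}3)$-simplices $\sigma_{(r,s)}$, indexed by where the forward chain switches from $a$ to $b$ (at $r$) and where the backward chain switches (at $s$); the image of $\psi_n$ is exactly $\bigcup_r\sigma_{(r,r)}$. One then filters by the single integer $\alpha=r-s$, first splitting into $\alpha\geq 0$ and $\alpha\leq 0$ (related by the $\tau$-duality, as you anticipated), and within $\alpha>0$ attaching the $\sigma_{(r,s)}$ with $\alpha=j$ to those with $\alpha<j$. The point is that $\sigma_{(r,s)}\cap X_{j-1}$ is then just $d_r(\sigma_{(r,s)})\cup d_{2n+2-s}(\sigma_{(r,s)})$: exactly two codimension-one faces, giving the simplest possible dull subset $\{\{r\},\{2n+2-s\}\}$. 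The pivot is taken at $2n+2-r$, which is the vertex $\overline{r}_b$ in the \emph{dual} half of the join (not at the $ab/aa$ transition in the forward half as you propose); the required thin $2$-simplices around this pivot are then supplied partly by the $Q$-type scalings and partly by the six extra families in the definition of $\scr{R}(n)$.
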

\begin{proof}
	We will prove something stronger, namely, for every $n \geq 0$ the map $\xi_n$ is scaled anodyne. Using the description of both $\scr{Q}(n)$ and $\scr{R}(n)$ as pushouts, we deduce that it will suffice to show that the map
	\[
	\func{Q(\Delta^n \times (\Delta^1)^{\sharp}) \to \left((\Delta^n \times \Delta^1)\star (\Delta^n \times \Delta^1)^{\op}\right)_{\diamond}}
	\]
	is scaled anodyne, where the subscript $\diamond$ indicates the scaling induced by that of $\scr{R}(n)$. Before embarking upon the proof of our claim we will set some notation
	\[
	Q(\Delta^n \times (\Delta^1)^{\sharp})= A^n_{\diamond}, \enspace \enspace  \left((\Delta^n \times \Delta^1)\star (\Delta^n \times \Delta^1)^{\op}\right)_{\diamond}=B^n_{\diamond}.
	\]
	Let $(r,s)$ be a pair of non-negative integers such that $r,s \leq n$. We define a simplex
	\[
	\func*{\sigma_{(r,s)}:\Delta^{2n+3} \to B^n_{\diamond}; \ell \mapsto \begin{cases}
		\enspace \ell_a \enspace \text{ if } \ell \leq r \\
		\enspace \ell_{b} \enspace \text{ if } r+1 \leq \ell \leq n+1 \\
		\enspace \overline{\ell}_b \enspace \text{ if } n+2 \leq \ell \leq 2n+2-s \\
		\enspace \overline{\ell}_a \enspace  \text{ if } 2n+3-s \leq \ell \leq 2n+3 \\	
		\end{cases}}
	\]
	and note that $B^{n}_{\diamond}=\bigcup\limits_{(r,s)} \sigma_{(r,s)}$. We further divide the simplices $\sigma_{(r,s)}$ into three families parametrized by $r-s=\alpha$. To illuminate our claims let us include some examples for $n=3$.
	\\
	\begin{figure}[h]
		\centering
		\begin{tabular}{c | c | c}
			$\alpha>0$ &  $\alpha=0$ &  $\alpha<0$ \\ \hline
			
			\begin{tikzpicture}[baseline={([yshift=-.7ex]current bounding box.center)},mybox/.style={ inner sep=15pt}]
			\node[mybox,scale=0.6] (box){%
				\begin{tikzcd}
				&  & \overline{0}_a            &  &                            &  & \overline{0}_b \arrow[llll]            \\
				0_a \arrow[dd,boldred] \arrow[rrrr] \arrow[rru] &  &                           &  & 0_b \arrow[rru] \arrow[dd] &  &                                        \\
				&  & \overline{1}_a \arrow[uu,boldred] &  &                            &  & \overline{1}_b \arrow[uu] \arrow[llll] \\
				1_a \arrow[rru] \arrow[rrrr] \arrow[dd,boldred] &  &                           &  & 1_b \arrow[rru] \arrow[dd] &  &                                        \\
				&  & \overline{2}_a \arrow[uu,boldred] &  &                            &  & \overline{2}_b \arrow[uu] \arrow[llll,boldred] \\
				2_a \arrow[rru] \arrow[rrrr] \arrow[dd,boldred] &  &                           &  & 2_b \arrow[rru] \arrow[dd] &  &                                        \\
				&  & \overline{3}_a \arrow[uu] &  &                            &  & \overline{3}_b \arrow[llll] \arrow[uu,boldred] \\
				3_a \arrow[rru] \arrow[rrrr,boldred]            &  &                           &  & 3_b \arrow[rru,boldred]            &  &                                       
				\end{tikzcd}
			};
			\end{tikzpicture}  &  \begin{tikzpicture}[baseline={([yshift=-.7ex]current bounding box.center)},mybox/.style={ inner sep=15pt}]
			\node[mybox,scale=0.6] (box){%
				\begin{tikzcd}
				&  & \overline{0}_a            &  &                            &  & \overline{0}_b \arrow[llll]            \\
				0_a \arrow[dd,boldmagenta] \arrow[rrrr] \arrow[rru] &  &                           &  & 0_b \arrow[rru] \arrow[dd] &  &                                        \\
				&  & \overline{1}_a \arrow[uu,boldmagenta] &  &                            &  & \overline{1}_b \arrow[uu] \arrow[llll] \\
				1_a \arrow[rru] \arrow[rrrr] \arrow[dd,boldmagenta] &  &                           &  & 1_b \arrow[rru] \arrow[dd] &  &                                        \\
				&  & \overline{2}_a \arrow[uu,boldmagenta] &  &                            &  & \overline{2}_b \arrow[uu] \arrow[llll,boldmagenta] \\
				2_a \arrow[rru] \arrow[rrrr,boldmagenta] \arrow[dd] &  &                           &  & 2_b \arrow[rru] \arrow[dd,boldmagenta] &  &                                        \\
				&  & \overline{3}_a \arrow[uu] &  &                            &  & \overline{3}_b \arrow[llll] \arrow[uu,boldmagenta] \\
				3_a \arrow[rru] \arrow[rrrr]            &  &                           &  & 3_b \arrow[rru,boldmagenta]            &  &                                       
				\end{tikzcd}
			};
			\end{tikzpicture} & \begin{tikzpicture}[baseline={([yshift=-.7ex]current bounding box.center)},mybox/.style={ inner sep=15pt}]
			\node[mybox,scale=0.6] (box){%
				\begin{tikzcd}
				&  & \overline{0}_a            &  &                            &  & \overline{0}_b \arrow[llll]            \\
				0_a \arrow[dd,boldblue] \arrow[rrrr] \arrow[rru] &  &                           &  & 0_b \arrow[rru] \arrow[dd] &  &                                        \\
				&  & \overline{1}_a \arrow[uu,boldblue] &  &                            &  & \overline{1}_b \arrow[uu] \arrow[llll] \\
				1_a \arrow[rru] \arrow[rrrr] \arrow[dd,boldblue] &  &                           &  & 1_b \arrow[rru] \arrow[dd] &  &                                        \\
				&  & \overline{2}_a \arrow[uu,boldblue] &  &                            &  & \overline{2}_b \arrow[uu] \arrow[llll] \\
				2_a \arrow[rru] \arrow[rrrr,boldblue] \arrow[dd] &  &                           &  & 2_b \arrow[rru] \arrow[dd,boldblue] &  &                                        \\
				&  & \overline{3}_a \arrow[uu,boldblue] &  &                            &  & \overline{3}_b \arrow[llll,boldblue] \arrow[uu] \\
				3_a \arrow[rru] \arrow[rrrr]            &  &                           &  & 3_b \arrow[rru,boldblue]            &  &                                       
				\end{tikzcd}
			};
			\end{tikzpicture} \\ \hline
			$\sigma_{(3,2)}$ & $\sigma_{(2,2)}$ & $\sigma_{(2,3)}$
		\end{tabular}
	\end{figure}

	We define $B^{+}_{\diamond}$ (resp. $B^{-}_{\diamond}$, resp. $B^{0}_{\diamond}$) as the union of the simplices $\sigma_{(r,s)}$ such that $\alpha \geq 0$ (resp. $\alpha \leq 0$, resp. $\alpha=0$) with the induced scaling. It follows from unwinding the definitions that $A^{n}_{\diamond}=B^{0}_{\diamond}$ and that $B^{+}_{\diamond} \cap B^{-}_{\diamond} = B^{0}_{\diamond}$. We have thus produced a pushout square
	\[
	\begin{tikzcd}[ampersand replacement=\&]
	A^n_{\diamond} \arrow[r] \arrow[d] \& B^{+}_{\diamond} \arrow[d] \\
	B^{-}_{\diamond} \arrow[r] \& B^{n}_{\diamond}. \arrow[ul,phantom,"\ulcorner",very near start]
	\end{tikzcd}
	\]
	We turn now to show that $A^n_{\diamond} \to B^{\pm}_{\diamond}$ is scaled anodyne. First let us tackle the case $\alpha>0$. To this end we produce a filtration 
	\[
	\func{A^n_{\diamond}=X_0 \to X_1 \to \cdots\to  X_{n-1}\to X_n=B^{+}_{\diamond}}
	\]
	where $X_j$ is the scaled simplicial subset consisting in those simplices contained in some $\sigma_{(r,s)}$ with $\alpha \leq j$. We claim that in order to show that $X_{j-1} \to X_j$ is scaled anodyne it suffices to show that top horizontal  morphism $f_{(r,s)}$ in the pullback diagram below
	\[
	\begin{tikzcd}[ampersand replacement=\&]
	W_{(r,s)} \arrow[r,"f_{(r,s)}"] \arrow[dr,phantom, "\lrcorner", very near start] \arrow[d] \& \Delta^{2n+3} \arrow[d,"\sigma_{(r,s)}"] \\
	X_{j-1} \arrow[r] \& X^{+}_{j}.
	\end{tikzcd}
	\]
	is scaled anodyne with respect to the induced scaling. Indeed, we observe that given $(r,s),(u,v)$ such that $r-s=u-v=j$ then it follows that $\sigma_{(r,s)}\cap \sigma_{(u,v)}\in X_{j-1}$ and the claim follows. After some contemplation we discover that
	\[
	W_{(r,s)}=d_{r}(\sigma_{(r,s)}) \cup d_{2n+2-s}(\sigma_{(r,s)}).
	\]
	Consequently we can define a dull subset consisting of the sets $\set{r}$, $\set{2n+2-s}$ with pivot point $2n+2-r$. Using \autoref{lem:pivot} we conclude that $A^n_{\diamond} \to B^{+}_{\diamond}$ is scaled anodyne.
	
	The case $\alpha<0$ is a formal dual of the case just proved. To see this we observe that the duality on $\scr{R}(n)$ restricts to $(B^{+}_{\diamond})^{\op}\isom B^{-}_{\diamond}$ and that our scaling is symmetric. The case $\alpha<0$ follows, concluding the proof.
\end{proof}

\begin{corollary}\label{cor:interpolation1}
	Let $\DD$ be an $\infty$-bicategory and let $\scr{X}$ be the simplicial set obtained via the cosimplicial object $\scr{R}$. Consider the induced map
	\[
	\func{\xi^{*}: \scr{X} \to \scr{L}_{\Delta^1}.}
	\]
	Then the $\xi^{*}$ is a trivial Kan fibration. In particular, after passing to fibers we obtain an equivalence of $\infty$-categories
	\[
	\func{\scr{X}_{(F,G)} \to[\simeq]  \scr{L}_{(F,G)}.}
	\]
\end{corollary}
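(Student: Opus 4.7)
The plan is to verify directly that $\xi^*$ has the right lifting property against every boundary inclusion $\partial\Delta^n\hookrightarrow \Delta^n$. Unwinding the cosimplicial descriptions of $\scr{X}$ and $\scr{L}_{\Delta^1}$, such a lifting problem transposes under adjunction to an extension problem in $\scsSet$
\[
\begin{tikzcd}
\partial\scr{R}^n\coprod_{\partial\scr{Q}^n}\scr{Q}(n) \arrow[r] \arrow[d] & \DD \\
\scr{R}(n) \arrow[ur,dashed]
\end{tikzcd}
\]
with $\partial\scr{Q}^n$ and $\partial\scr{R}^n$ formed as in the Preliminaries. The $F$- and $G$-constancy constraints defining $\scr{X}$ and $\scr{L}_{\Delta^1}$ inside $\scr{R}^*(\DD)$ and $\scr{Q}^*(\DD)$ respectively correspond under $\xi_n$, and they are imposed on subsimplicial sets already contained in the source of the extension problem, so any diagonal filler automatically inherits them. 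Since $\DD$ is fibrant, it therefore suffices to show that the vertical inclusion above is a trivial cofibration in $\scsSet$.

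This reduces the work to a pushout-product-type statement. On one hand, \autoref{prop:scaledanodyne} provides that $\xi_n:\scr{Q}(n)\to \scr{R}(n)$ is levelwise scaled anodyne. On the other hand, both cosimplicial objects $\scr{Q}$ and $\scr{R}$ are Reedy cofibrant: for $\scr{R}$, this is immediate from the poset presentation of \autoref{rmk:RnNotation}, since the non-degenerate simplices of $\scr{R}(n)$ not already belonging to the latching object are exactly those involving a newly introduced vertex and hence cannot factor through a proper face; for $\scr{Q}$, it follows similarly from the fact that $Q$ preserves monomorphisms. Combining the two observations, the relative latching inclusion $\partial\scr{R}^n\coprod_{\partial\scr{Q}^n}\scr{Q}(n)\to \scr{R}(n)$ is itself a trivial cofibration, which resolves the extension problem.

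Having shown that $\xi^*$ is a trivial Kan fibration, the statement about fibers follows at once, because trivial Kan fibrations are stable under pullback. The main obstacle I anticipate is the careful bookkeeping needed to confirm that the Reedy-style pushout-product interacts well both with the scaling on $\scr{R}(n)$ and with the $F$- and $G$-constancy restrictions. If this turns out to be delicate, a safer route is to mimic the explicit filtration strategy used in the proof of \autoref{prop:scaledanodyne}: build a tower of scaled anodyne pushouts interpolating between $\partial\scr{R}^n\coprod_{\partial\scr{Q}^n}\scr{Q}(n)$ and $\scr{R}(n)$ by adapting the $\sigma_{(r,s)}$ analysis to track which faces already lie in $\partial\scr{R}^n$, and invoke \autoref{lem:pivot} at each step.
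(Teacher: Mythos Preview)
Your proposal is correct and follows essentially the same route as the paper: both reduce the lifting problem to showing that $\scr{Q}(n)\coprod_{\partial\scr{Q}^n}\partial\scr{R}^n\to\scr{R}(n)$ is a trivial cofibration, deriving this from the levelwise scaled anodyneness of $\xi$ established in \autoref{prop:scaledanodyne}. The paper phrases the deduction via the observation that $\partial\scr{Q}^n\to\partial\scr{R}^n$ is scaled anodyne together with 2-out-of-3, whereas you package it as a Reedy/pushout-product statement; these are the same argument in slightly different dress, and your added remarks on the $F$- and $G$-constancy constraints and on Reedy cofibrancy make explicit points the paper leaves implicit.
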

\begin{proof}
	Note that as an immediate consequence of \autoref{prop:scaledanodyne} we obtain an scaled anodyne map $\partial \scr{Q}^n \to \partial \scr{R}^{n}$. Consider the morphisms
	\[
	\func{\scr{Q}(n) \to \scr{Q}(n)\coprod\limits_{\partial \scr{Q}^n} \partial \scr{R}^{n} \to \scr{R}(n),}
	\]
	and note the last map is a trivial cofibration by 2-out-of-3. The reader will observe that the boundary lifting problems are in bijection with lifting problems of the form
	\[
	\begin{tikzcd}[ampersand replacement=\&]
	\scr{Q}(n)\coprod\limits_{\partial \scr{Q}^n} \partial \scr{R}^{n} \arrow[d] \arrow[r] \& \DD \\
	\scr{R}(n) \arrow[ur,dotted]
	\end{tikzcd}
	\]
	and hence the result.
\end{proof}

\begin{figure}[htb]
	\begin{center}
		\begin{tikzpicture}[mybox/.style={ inner sep=15pt}]
		\node[mybox,scale=0.8] (box){%
			\begin{tikzcd}
			&  & \overline{0}_{aa} \arrow[rrrr,boldblue]            &  &                                                         &  & \overline{0}_{ab}            &  &                               &  & \overline{0}_{bb} \arrow[llll]            \\
			0_{aa} \arrow[rru] \arrow[dd] &  &                                           &  & 0_{ab} \arrow[dd,boldblue]\arrow[ull,boldblue] \arrow[rrrr,boldblue] \arrow[rru,boldblue] \arrow[llll] &  &                              &  & 0_{bb} \arrow[rru] \arrow[dd,boldblue] \arrow[ull,boldblue] &  &                                           \\
			&  & \overline{1}_{aa} \arrow[rrrr,boldblue] \arrow[uu,boldblue] &  &                                                         &  & \overline{1}_{ab} \arrow[uu,boldblue] &  &                               &  & \overline{1}_{bb} \arrow[uu] \arrow[llll] \\
			1_{aa} \arrow[rru]            &  &                                           &  & 1_{ab} \arrow[rru,boldblue]\arrow[ull,boldblue] \arrow[rrrr,boldblue] \arrow[llll]            &  &                              &  & 1_{bb} \arrow[rru] \arrow[ull,boldblue]            &  &                                          
			\end{tikzcd}
		};
		\end{tikzpicture}
	\end{center}
	\caption{$\scr{T}(1)$ pictured in blue as a subset of $\scr{R}(1)$ under the inclusion $\mu_1$. The map $\psi_1$ can be alternately characterized as the unique map such that $\psi_1\circ \mu_1=\id$ and $\psi$ preserves $\overline{(-)}$ and its dual.} \label{fig:TinR}
\end{figure}
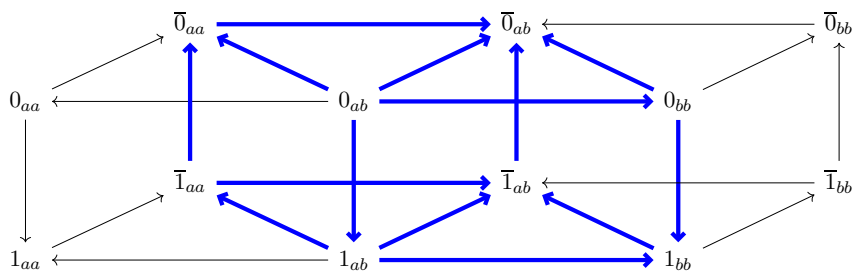

\begin{construction}
	We define a map 
	\[
	\func{\scr{R}(n)\to Q(n)}
	\]
	by requiring $\func{\overline{i_{xy}}\mapsto \overline{i}}$ and $\func{i_{xy}\mapsto i}$. We further define a map 
	\[
	\func{\scr{R}(n)\to \Delta^1}
	\]
	by 
	$
	\func{i_{xy}\mapsto x; \overline{i_{xy}}\mapsto y.}
	$
	Note that both of these maps can be easily checked to preserve the scalings. Together, they thus define  a map 
	$
	\func{\psi_n:\scr{R}(n)\to \scr{T}(n)}
	$
	such that $\psi_n\circ \mu_n=\id$ (see \autoref{fig:TinR}). Moreover, the $\psi_n$ yield a natural transformation $\psi:\scr{R}\to \scr{T}$. We denote by $\func{\psi^\ast:\Tw(\DD^{\Delta^1})\to \scr{X}}$ the induced map.  
\end{construction}


\begin{lemma}\label{lem:interpolation2}
	The diagram 
	\[
	\begin{tikzcd}[ampersand replacement=\&]
	\Tw(\DD^{\Delta^1}) \arrow[dr,"\Theta_{\Delta^1}"] \arrow[d,swap,"\psi_*"]	\&  \\
	\scr{X} \arrow[r,"\xi^\ast"'] \& \scr{L}_{\Delta^1}
	\end{tikzcd}
	\]
	commutes. 
\end{lemma}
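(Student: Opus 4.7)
The strategy is to identify both $\xi^{*}\circ\psi_{*}$ and $\Theta_{\Delta^{1}}$ as precomposition with maps of scaled simplicial sets $\scr{Q}(n) \to \scr{T}(n)$, and then to verify these two maps agree. Since both compositions are given by natural operations on the simplices of $\Tw(\DD^{\Delta^{1}})$, the lemma reduces to checking that two specific cosimplicial maps $\scr{Q} \nat \scr{T}$ coincide.

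First, I would observe that an $n$-simplex of $\Tw(\DD^{\Delta^{1}})$ is a map $\sigma: \scr{T}(n) \to \DD$, and that both compositions produce an $n$-simplex of $\scr{L}_{\Delta^{1}}$ of the form $\sigma \circ f$ for some map $f: \scr{Q}(n) \to \scr{T}(n)$. For the composition $\xi^{*}\circ\psi_{*}$, the map $f$ is tautologically $\psi_{n}\circ\xi_{n}$. For $\Theta_{\Delta^{1}}$, I would unwind the definition of $u$: since $\Tw$ preserves products, the evaluation $e: \DD^{\Delta^{1}}\times \Delta^{1}\to \DD$ induces the composite $\Tw(\DD^{\Delta^{1}})\times \Tw(\Delta^{1}) \isom \Tw(\DD^{\Delta^{1}}\times \Delta^{1})\to \Tw(\DD)$. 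Passing to adjoints via $\Hom(Q_{!}(A), \DD) \isom \Hom(A, \Tw(\DD))$ with $A = \Delta^{n}\times \Tw(\Delta^{1})^{\sharp}$ (so that $Q_{!}(A) = \scr{Q}(n)$), one finds that the $\Theta$-induced map $\alpha_{n}: \scr{Q}(n) \to \scr{T}(n)$ acts on the ``slice'' $Q(m)$ associated to an $m$-simplex $(\beta: [m]\to [n], \tau: Q(m)\to \Delta^{1})$ of $\Delta^{n}\times \Tw(\Delta^{1})$ as the pair $(Q(\beta), \tau): Q(m) \to Q(n)\times \Delta^{1}$.

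Next I would verify that $\psi_{n}\circ\xi_{n}$ admits the same slice-wise description. A slice $Q(m) \hookrightarrow \scr{Q}(n)$ indexed by $(\beta, \tau)$ is embedded by $\xi_{n}$ into $\scr{R}(n)$ as the subsimplex whose vertices carry the coordinates $\ell_{\epsilon}, \bar{\ell}_{\epsilon}$ dictated by $(\beta, \tau)$, and $\psi_{n}$ then sends $\ell_{xy}\mapsto (\ell, x)$ and $\bar{\ell}_{xy}\mapsto (\bar{\ell}, y)$ by definition. A vertex-by-vertex check shows the resulting map is precisely $(Q(\beta), \tau)$; since $\scr{T}(n) = Q(n)\times\Delta^{1}$ is the nerve of a poset, agreement on 0-simplices implies agreement on all simplices, so $\psi_{n}\circ\xi_{n} = \alpha_{n}$, completing the proof.

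The main obstacle is the combinatorial bookkeeping: matching each simplex $(\beta, \tau)$ of $\Delta^{n}\times\Tw(\Delta^{1})^{\sharp}$ to its image in $\scr{R}(n)$ under $\xi_{n}$, and then checking the output of $\psi_{n}$ matches the $(Q(\beta), \tau)$ prescription, requires care with the index conventions between $Q(m)$, $\scr{R}(n)$, and $\scr{T}(n)$. Once the labels are aligned, however, the verification is a tautology from the explicit formulas for $\xi$ and $\psi$ together with the adjoint description of $u$.
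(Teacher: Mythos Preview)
Your proposal is correct, and indeed goes considerably beyond the paper's own proof, which consists of the single sentence ``Left as an exercise to the reader.'' Your strategy---reducing to the identity $\psi_n\circ\xi_n=\alpha_n$ of cosimplicial maps $\scr{Q}(n)\to\scr{T}(n)$, then verifying this slice-by-slice on each $Q(m)\hookrightarrow\scr{Q}(n)$ indexed by a simplex $(\beta,\tau)$ of $\Delta^n\times\Tw(\Delta^1)$---is exactly the natural way to carry out the exercise, and your vertex-level bookkeeping is accurate. One small clarification: when you invoke ``$\scr{T}(n)$ is the nerve of a poset, so agreement on $0$-simplices suffices,'' the relevant point is that each slice $Q(m)$ is itself (the nerve of) a linear order, so a map $Q(m)\to\scr{T}(n)$ into a poset nerve is determined on vertices; since $\scr{Q}(n)$ is a colimit of such $Q(m)$'s, this is enough. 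Otherwise the argument is complete as written.
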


\begin{proof}
	Left as an exercise to the reader.
\end{proof}

\begin{proof}[Proof of \autoref{thm:nat}]
	By virtue of \autoref{prop:reduction}, it will suffice to show that $\Theta_X$ is an equivalence of Cartesian fibrations for $X=\Delta^n$ with $n=0,1$. The case $n=0$ is obvious. To show the case $n=1$ we observe that due to \autoref{cor:interpolation1} and \autoref{lem:interpolation2} it will suffice to show $\psi_{*}$ is an equivalence of $\infty$-categories upon passage to fibers. We further note that since $\mu^* \circ \psi^*= \id$ it will be enough to show that $\varphi^*=\psi^* \circ \mu^*$ is a fiberwise equivalence. Let $\sigma: \Delta^n \to \Delta^1$ and let $j\in [n]$ be the first object such that $\sigma(j)=1$ if $\sigma$ is constant on $0$ we set the convention $j=n+1$. Now we can define a map of scaled simplicial sets 
	\[
	\func{\varphi^1_\sigma: \scr{R}(n) \to \scr{R}(n)}
	\]
	which leaves every object invariant except those of the form $\ell_{aa}$ with $\ell < j$ which are sent to $\ell_{ab}$. Given $\rho:\Delta^n \to \scr{X}_{(F,G)}$ we define a simplex $H(\sigma,\rho):\Delta^n \to \scr{X}_{(F,G)}$ given by the composite
	\[
	\func{ \scr{R}(n) \to[\varphi_{\sigma}] \scr{R}(n) \to[\rho] \DD}
	\]
	This assignment extends to a homotopy $\func{H_1: \Delta^1 \times \scr{X}_{(F,G)} \to \scr{X}_{(F,G)}}$ which is component-wise an equivalence. This exhibits an equivalence of morphisms $\id\sim \left(\varphi^1_0\right)_{(F,G)}^{*}$ where $\varphi^1_0$ denotes the previously defined map with respect with the constant simplex at $0$.
	
	Let $\sigma: \Delta^n \to \Delta^1$. Then we define a map of scaled simplicial sets
	\[
	\func{\varphi^2_{\sigma}:\scr{R}(n) \to \scr{R}(n)}
	\]
	that leaves every object invariant except those of the form $\ell_{aa}$ which are sent to $\ell_{ab}$ and those of the form $\overline{\ell}_{bb}$ with $\ell < j$ with are sent to $\overline{\ell}_{ab}$. We can now define, in perfect analogy to the situation above, a natural equivalence $\func{H_2: \Delta^1 \times \scr{X}_{(F,G)} \to \scr{X}_{(F,G)}}$ between $\varphi^{*}_{(F,G)}$ and $\left(\varphi^1_0\right)_{(F,G)}^{*}$, hence the result.
\end{proof}

\subsection{Application: weighted colimits of $\infty$-categories}

We conclude this section (and thereby the paper) with several corollaries of \autoref{thm:nat}, and their application to the 2-dimensional universal property of weighted colimits.  Because of the technical complexities shunted into the proofs of the properties of $\Tw(\DD)$, the proof of this 2-universal property is extremely straightforward.

Throughout this section we will fix an $\infty$-category $\C$ and a pair of functors $\func{F: \C \to \iCat_{\infty}}$, $\func{W: \C^{\op} \to \iCat_{\infty}}$ that we will refer of as the diagram and the weight functors respectively. We will denote by $\goth{C}\!\on{at}_{\infty}$ the $\infty$-bicategory of $\infty$-categories.

\begin{definition}
	Let $\DD$ be an $\infty$-bicategory. We say that the underlying $\infty$-category $\D$ is \emph{tensored} over $\iCat_{\infty}$ with respect to $\DD$ if for every $d \in \D$ the mapping functor $\on{Map}_{\DD}(d,\mathblank)$ has a left adjoint $\mathblank \tensor d: \iCat_{\infty} \to \D$; in this case these adjoints determine an essentially unique functor $\iCat_{\infty}\times \D \to \D$.
\end{definition}

\begin{corollary}\label{cor:tensored}
	Let $\DD$ be an $\infty$-bicategory such that the underlying $\infty$-category $\D$ is tensored over $\iCat_{\infty}$ with respect to $\DD$. Then for every $\infty$-category $\C$ the functor category $\D^{\C}$ is tensored over $\iCat_\infty$ with respect to $\DD^{\C}$.
\end{corollary}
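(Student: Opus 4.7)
The plan is to define $K\otimes F$ pointwise and then use the end formula from \autoref{thm:nat} to reduce the adjunction at the level of $\DD^\C$ to the pointwise adjunction in $\DD$, together with the fact that $\on{Fun}(K,-)$ preserves limits.

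First I would construct the candidate tensor functor. For $K\in \iCat_\infty$ and $F\in \D^\C$, the hypothesis provides an essentially unique functor $\iCat_\infty\times \D\to \D$ whose restriction to $\{K\}\times \D$ is left adjoint to $\on{Map}_\DD(-,=)$. Post-composing $F$ with $K\otimes(-)$ produces $K\otimes F\in\D^\C$. Assembling this over $K$ yields a candidate functor $\iCat_\infty\times \D^\C\to \D^\C$. Once this is in hand, it suffices to show, naturally in $G\in \D^\C$ and $K\in\iCat_\infty$, that
\[
\on{Map}_{\DD^\C}(K\otimes F,G)\simeq \on{Fun}(K,\on{Map}_{\DD^\C}(F,G)).
\]

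To prove this equivalence, I would apply \autoref{thm:nat} to both sides. On the left,
\[
\on{Map}_{\DD^\C}(K\otimes F,G)\simeq \lim_{\Tw(\C)^{\op}}\on{Map}_\DD\bigl((K\otimes F)(\mathblank),G(\mathblank)\bigr).
\]
Since $K\otimes F$ is defined pointwise, the pointwise adjunction in $\DD$ gives a natural equivalence
\[
\on{Map}_\DD\bigl(K\otimes F(c),G(c')\bigr)\simeq \on{Fun}\bigl(K,\on{Map}_\DD(F(c),G(c'))\bigr)
\]
of functors $\Tw(\C)^{\op}\to \iCat_\infty$. The functor $\on{Fun}(K,-)\colon \iCat_\infty\to \iCat_\infty$ is a right adjoint (to $K\times -$) and hence preserves limits, so
\[
\lim_{\Tw(\C)^{\op}}\on{Fun}\bigl(K,\on{Map}_\DD(F(\mathblank),G(\mathblank))\bigr)\simeq \on{Fun}\Bigl(K,\lim_{\Tw(\C)^{\op}}\on{Map}_\DD(F(\mathblank),G(\mathblank))\Bigr).
\]
A second application of \autoref{thm:nat} identifies the inner limit with $\on{Map}_{\DD^\C}(F,G)$, yielding the desired equivalence. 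Naturality in $K$ and $G$ follows from the naturality clause in \autoref{thm:nat} and from the naturality of the pointwise adjunction.

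The main (minor) obstacle is bookkeeping rather than substance: one must check that the pointwise prescription $K\otimes F:=(K\otimes(-))\circ F$ is genuinely functorial in the data, and that the equivalence above is natural in each variable so as to define the adjunction functorially. Both follow from the essential uniqueness of adjoints together with the fact that \autoref{thm:nat} produces an equivalence natural in each variable; from there the identification of $K\otimes (-)$ as a left adjoint to $\on{Map}_{\DD^\C}(F,-)$ is automatic, and the collection of such adjoints determines the desired tensoring functor $\iCat_\infty\times \D^\C\to \D^\C$.
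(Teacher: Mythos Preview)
Your proposal is correct and follows essentially the same approach as the paper: the paper's proof is the single line ``Combine \autoref{thm:nat} with \cite[Lem. 6.7]{GHN}'', and what you have written is precisely the argument one obtains by unpacking that combination --- apply the end formula of \autoref{thm:nat}, use the pointwise adjunction, and commute $\on{Fun}(K,-)$ past the limit. The only difference is that you spell out the steps that the paper defers to \cite{GHN}.
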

\begin{proof}
	Combine \autoref{thm:nat} with  \cite[Lem. 6.7]{GHN}.
\end{proof}

\begin{corollary}\label{cor:straight}
	Let $\C$ be an $\infty$-category and let $\func{\scr{E} \to \C}$, $\func{\scr{E}' \to \C}$ be Cartesian fibrations. We denote by $\on{Fun}^{\on{cart}}_{\C}(\scr{E},\scr{E}')$ the $\infty$-category of maps of Cartesian fibrations. Then there is a natural equivalence of $\infty$-categories
	\[
	\func{\on{Fun}^{\on{cart}}_{\C}(\scr{E},\scr{E}') \to[\simeq] \on{Nat}_{\C}(\on{St}(\scr{E}),\on{St}(\scr{E}))}
	\]
	where $\on{St}$ denotes the straightening functor.
\end{corollary}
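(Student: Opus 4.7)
The plan is to realize both sides of the asserted equivalence as the same end over the twisted arrow category, and then invoke \autoref{thm:nat} to finish.

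First I would apply \autoref{thm:nat} directly to the $\infty$-bicategory $\DD = \goth{C}\!\on{at}_{\infty}$, taking $F := \on{St}(\scr{E})$ and $G := \on{St}(\scr{E}')$, which are functors from $\C^{\op}$ to $\goth{C}\!\on{at}_{\infty}$. Combined with the standard fact that straightening preserves fibers (so that $\on{St}(\scr{E})(c) \simeq \scr{E}_c$ and $\on{St}(\scr{E}')(c) \simeq \scr{E}'_c$), this produces a natural equivalence
\[
\on{Nat}_{\C}(\on{St}(\scr{E}), \on{St}(\scr{E}')) \simeq \lim_{\Tw(\C^{\op})^{\op}} \on{Map}_{\goth{C}\!\on{at}_{\infty}}(\scr{E}_{(-)}, \scr{E}'_{(-)}),
\]
natural in both $\scr{E}$ and $\scr{E}'$.

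Next I would invoke the classical end description of Cartesian functor categories, which identifies $\on{Fun}^{\on{cart}}_{\C}(\scr{E}, \scr{E}')$ with the same end. This appears for instance in \cite[\S 7]{GHN}; alternatively one can give a direct proof using \autoref{thm:TwClassifiesMap}, which allows one to produce a Cartesian fibration over $\Tw(\C^{\op})$ whose Cartesian sections compute the $\infty$-category of Cartesian functors. Transitivity of the two equivalences then yields the desired natural identification. Naturality in $\scr{E}$ and $\scr{E}'$ is automatic, since both identifications are constructed functorially.

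The main obstacle here has already been dealt with in the main body of the paper: it is the end formula for natural transformations valued in an $\infty$-bicategory, i.e.~\autoref{thm:nat}. Once this is in hand, the corollary follows essentially formally from pre-existing Cartesian fibration machinery, which explains why the authors describe the resulting proof as \enquote{extremely straightforward}. The only mild technical point to be careful about is book-keeping of variance: straightening sends Cartesian fibrations $\scr{E} \to \C$ to functors $\C^{\op} \to \goth{C}\!\on{at}_{\infty}$, so one must apply \autoref{thm:nat} with $X = \C^{\op}$ and use the canonical equivalence between $\Tw(\C^{\op})$ and the evident swap of $\Tw(\C)$ to match conventions with the end formula on the Cartesian fibration side.
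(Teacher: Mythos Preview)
Your approach is correct and matches the paper's own proof, which simply reads \enquote{Combine \autoref{thm:nat} with \cite[Prop.~6.9]{GHN}}. Your proposal unpacks exactly this combination: \autoref{thm:nat} gives the end formula for $\on{Nat}_{\C}(\on{St}(\scr{E}),\on{St}(\scr{E}'))$, and the cited result from \cite{GHN} gives the matching end formula for $\on{Fun}^{\on{cart}}_{\C}(\scr{E},\scr{E}')$; the only minor discrepancy is that you point to \S 7 rather than Proposition~6.9.
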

\begin{proof}
	Combine \autoref{thm:nat} with \cite[Prop. 6.9]{GHN}.
\end{proof}

\begin{remark}
	It is worth noting that \autoref{cor:straight} can be interpreted as very compelling evidence suggesting that an enhanced version of the straightening functor $\on{St}$, will yield an equivalence of $\infty$-\emph{bicategories} between the category of Cartesian fibrations over $\C$ and the category of $\iCat_{\infty}$-valued functors on $\C$.
\end{remark}

Recall that in \cite[Def. 2.7]{GHN}, the authors define the weighted colimit of $F$ with weight $W$ as the coend 
\[
\colim\limits_{\Tw(\C)}W \times F.
\]
According to this definition the universal property of the weighted colimit is purely 1-dimensional. Our aim in this section is to show that the previous definition is just a shadow of a bicategorical universal property and thus find a bridge between the classical theory of weighted colimits in 2-categories and the realm of $\infty$-bicategories.

\begin{definition}
	The weighted colimit of $F$ with weight $W$ is the unique (up to equivalence) $\infty$-category representing the functor
	\[
	\begin{tikzcd}[ampersand replacement=\&]
	\goth{C}\!\on{at}_{\infty} \arrow[r,"\mathcal{Y}"] \& \goth{C}\!\on{at}_{\infty}^{\goth{C}\!\on{at}^{\op}_{\infty}} \arrow[r,"F^{*}"] \& \goth{C}\!\on{at}_{\infty}^{\C^{\op}} \arrow[rrr,"\on{Nat}_{\C^{\op}}(W{,}-)"] \& \& \& \goth{C}\!\on{at}_{\infty}
	\end{tikzcd}
	\]
	where $\mathcal{Y}$ denotes the bicategorical Yoneda embedding.\footnote{We are here ignoring some substantial set-theoretic complexities. We should, more properly, fix a nested pair of Grothendieck universes, and consider variants of $\goth{C}\!\on{at}_{\infty}$ based on size. In the interest of concision, we will sweep such set-theoretic concerns under the rug, leaving their contemplation to the interested reader.} We will denote weighted colimit by $W\otimes F$. More compactly, this definition means that there is an equivalence $\on{Nat}_{\C^\op}(W,\Fun(F(-),\scr{X}))\simeq \Fun(W\otimes F,\scr{X})$, natural in $\scr{X}$.
\end{definition}

\begin{remark}
	This definition of weighted colimits was already considered in more generality in \cite{AG20}.
\end{remark}

\begin{theorem}
	Consider a pair of functors $\func{F: \C \to \iCat_{\infty}}$, $\func{W: \C^{\op} \to \iCat_{\infty}}$. Then there is an equivalence of $\infty$-categories
	\[
	\func{W \tensor F \to[\simeq] \colim\limits_{\Tw(\C)}W \times F.}
	\]
\end{theorem}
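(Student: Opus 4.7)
The plan is to apply the bicategorical Yoneda lemma: I will show that $W\otimes F$ and $\colim_{\Tw(\C)}W\times F$ co-represent the same functor of a test $\infty$-category $\scr{X}\in\iCat_{\infty}$. For the coend side, the universal property of colimits combined with the Cartesian-closed structure of $\iCat_{\infty}$ yields
\[
\Fun\left(\colim_{\Tw(\C)}W\times F,\,\scr{X}\right)\simeq\lim_{\Tw(\C)^{\op}}\Fun(W(-)\times F(-),\scr{X})\simeq\lim_{\Tw(\C)^{\op}}\on{Map}_{\goth{C}\!\on{at}_{\infty}}\bigl(W(-),\Fun(F(-),\scr{X})\bigr),
\]
with all equivalences natural in $\scr{X}$.

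For the second step, I will invoke \autoref{thm:nat} applied to the $\infty$-bicategory $\goth{C}\!\on{at}_{\infty}$ and the pair of functors $W,\Fun(F(-),\scr{X})\colon \C^{\op}\to\goth{C}\!\on{at}_{\infty}$. The theorem provides an equivalence
\[
\on{Nat}_{\C^{\op}}\bigl(W,\Fun(F(-),\scr{X})\bigr)\simeq\lim_{\Tw(\C^{\op})^{\op}}\on{Map}_{\goth{C}\!\on{at}_{\infty}}\bigl(W(-),\Fun(F(-),\scr{X})\bigr).
\]
I then match this with the limit from the previous paragraph using the canonical isomorphism $\Tw(\C^{\op})\cong\Tw(\C)$ of Cartesian fibrations induced by the $\tau_n$-duality built into \autoref{defn:CosimpQ}. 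Under this isomorphism the two projections to $\C$ and $\C^{\op}$ are interchanged, which is precisely the swap needed to reconcile the variance of $W$ (contravariant) with that of $F$ (covariant). Combining with the defining universal property $\Fun(W\otimes F,\scr{X})\simeq\on{Nat}_{\C^{\op}}(W,\Fun(F(-),\scr{X}))$ produces a chain of natural-in-$\scr{X}$ equivalences, and the bicategorical Yoneda lemma for $\goth{C}\!\on{at}_{\infty}$ closes the argument.

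The main obstacle is bookkeeping rather than depth: every equivalence in the chain must be natural in $\scr{X}$ at the level of the $\infty$-bicategory $\goth{C}\!\on{at}_{\infty}$ rather than merely the underlying $\infty$-category, so that the bicategorical Yoneda lemma can be applied. Naturality of the Cartesian-closed adjunction is standard, while the \emph{natural in each variable} clause of \autoref{thm:nat} handles the second step. The auxiliary duality $\Tw(\C^{\op})\cong\Tw(\C)$ requires only that $\tau_n$ preserves the scaling of $Q(n)$, which it does by construction since the scaling in \autoref{defn:CosimpQ} is explicitly $\tau_n$-symmetric, together with a direct check that the resulting identification intertwines the two projections on $n$-simplices.
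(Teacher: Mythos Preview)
Your proposal is correct and follows essentially the same approach as the paper's own proof: both establish a chain of equivalences natural in $\scr{X}$ combining the Cartesian-closed structure of $\iCat_\infty$, the universal property of colimits, and \autoref{thm:nat}, then invoke the defining universal property of $W\otimes F$. You are more explicit than the paper about two points it leaves implicit --- the identification $\Tw(\C^{\op})\cong\Tw(\C)$ via the $\tau_n$-duality, and the need for the equivalences to be natural at the level of $\goth{C}\!\on{at}_\infty$ so that the bicategorical Yoneda lemma applies --- but the underlying argument is the same.
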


\begin{proof}
	Let $\scr{X}$ be an $\infty$-category. We trace out a chain of equivalences, natural in $\scr{X}$. By \autoref{thm:nat}, we have 
	\[
	\on{Nat}_{\C^\op} (W,\Fun(F(-),\scr{X}))\simeq \lim_{\Tw(\C)^\op} \Fun(W(-),\Fun(F(-),\scr{X})).
	\]
	A standard chain of manipulations then yields
	\[
	\lim_{\Tw(\C)^\op}\Fun(W(-),\Fun(F(-),\scr{X}))\simeq \lim_{\Tw(\C)^\op}\Fun(W(-)\times F(-),\scr{X}))\simeq \Fun \left(\colim_{\Tw(\C)}W\times F, \scr{X}\right)
	\]
	so that $\colim\limits_{\Tw(\C)} W\times F$ satisfies the universal property defining $W\otimes F$, completing the proof. 
\end{proof}

\newpage
	

\begin{thebibliography}{8}	
		\bibitem[ADS20]{ADS2Nerve} Abell\'an Garc\'ia, F., Dyckerhoff, T., and Stern, W. H.
	\enquote{A relative 2-nerve}.  {\em Algebraic \& Geometric Topology} (To appear)
	
	\bibitem[AGS20]{AGS20} Abell\'an Garc\'ia, Fernando and Stern, Walker H. \enquote{Theorem A for marked 2-categories.} 2020 arXiv:\href{https://arxiv.org/abs/2002.12817}{2002.12817}.
	 
	\bibitem[AG20]{AG20} Abell\'an Garc\'ia, F. \enquote{Marked cofinality for $\infty$-functors}. 2020 arXiv: \href{https://arxiv.org/abs/2006.12416}{2006.12416}.
	 
	 \bibitem[BGN18]{Dualizing} Barwick, Clark, Glasman, Saul, and Nardin, Denis. \enquote{Dualizing cartesian and cocartesian fibrations}. {\em Theory and Applications of Categories}. Vol. 33 No. 4 (2018), pp. 67-94.
	 
	 \bibitem[DK19]{DKHSSI} Dyckerhoff, Tobias and Kapranov, Mikhail. \enquote{Higher Segal Spaces}. Springer Lecture Notes in Mathematics 2244. {\em Springer}, 2019. 
	 
	 \bibitem[Bar17]{BarwickK} Barwick, Clark. \enquote{Spectral Mackey Functors and equivariant Algebraic K-Theory (I).} {\em Advances in Mathematics} Vol. 304 (Jan. 2017), pp. 646-727. 
	 
	 \bibitem[Cis19]{Cisinski} Cisinski, Dennis-Charles. \enquote{Higher Categories and Homotopical Algebra.} Cambridge University Press, 2019.
	 
	 
	 \bibitem[DS11]{DuggerSpivak} Dugger, Daniel and Spivak, David I. \enquote{Rigidification of quasi-categories}.  {\em Algebraic \& Geometric Topology} 11.1 (jan. 2011), pp. 225--261. \textsc{doi}: \href{https://doi.org/10.2140/agt.2011.11.225}{10.2140/agt.2011.11.225}. 
	 

     \bibitem[GHL20]{HarpazEquivModels} Gagna, Andrea., Harpaz, Yonathan,. Lanari, Edoardo. \enquote{On the equivalence of all models for $(\infty,2)$-categories.} 2019 arXiv: \href{http://arxiv.org/abs/1911.01905}{1911.01905}
	 			  
	 \bibitem[GHN15]{GHN} Gepner, D., Haugseng R. and Nikolaus, T. \enquote{Lax colimits and free fibrations in $\infty$-categories}. {\em Documenta Mathematica } 1.15 (jan. 2015), vol 22. pp. 1225-1266
	 
	 \bibitem[Lur11]{LurieDAGX} Lurie, Jacob. \enquote{Derived Algebraic Geometry {X}: Formal Moduli Problems} 2011 available at the author's webpage: \href{people.math.harvard.edu/~lurie/papers/DAG-X.pdf}{DAG X}
	 
	 \bibitem[Lur09a]{LurieGoodwillie} Lurie, Jacob. \enquote{$(\infty, 2)$-categories and the Goodwillie Calculus}. 2009. arXiv: \href{http://arxiv.org/abs/0905.0462}{0905.0462}

    \bibitem[Lur09]{HTT} Lurie, Jacob. \enquote{Higher Topos Theory}. Princeton University Press, 2009.	
	
	\bibitem[Lur17]{HA} Lurie, Jacob. \enquote {Higher Algebra}. 2017 {\em Avalaible at} \href{http://people.math.harvard.edu/~lurie/papers/HA.pdf}{\em the author's webpage.}
    

\end{thebibliography}
\end{document}